\newtheorem{Theorem}{Theorem}[section]
\newtheorem{Lemma}[Theorem]{Lemma}
\newtheorem{Corollary}[Theorem]{Corollary}
\newtheorem{Remark}[Theorem]{Remark}
\numberwithin{equation}{section} \allowdisplaybreaks
\renewcommand\abstract{{\bf Abstract}}
\begin{document}
\title{Critical points of solutions to a quasilinear elliptic equation with nonhomogeneous Dirichlet boundary conditions\footnote{\footnotesize The work is supported by National Natural Science Foundation of China (No.11401307, No.11401310), High level talent research fund of Nanjing Forestry University (G2014022) and Postgraduate Research \& Practice Innovation Program of Jiangsu Province (KYCX17\_0321). The second author is sponsored by Qing Lan Project of Jiangsu Province.}}

\author{Haiyun Deng$^{1}$\footnote{\footnotesize Corresponding author E-mail: haiyundengmath1989@163.com, Tel.: +86 15877935256}, Hairong Liu$^{2}$, Long Tian$^{1}$  \\[12pt]
\small \emph {$^{1}$School of Science, Nanjing University of Science and Technology, Nanjing, Jiangsu, 210094, China;}\\
\small \emph {$^{2}$School of Science, Nanjing Forestry University, Nanjing, Jiangsu, 210037, China;}}
%\date{Received:  / Accepted: }
%\communicated{}
\date{}
\maketitle

\renewcommand{\labelenumi}{[\arabic{enumi}]}

\begin{abstract}{\bf:} \footnotesize
 In this paper, we mainly investigate the critical points associated to solutions $u$ of a quasilinear elliptic equation with nonhomogeneous Dirichlet boundary conditions in a connected domain $\Omega$ in $\mathbb{R}^2$. Based on the fine analysis about the distribution of connected components of a super-level set $\{x\in \Omega: u(x)>t\}$ for any $\mathop {\min}_{\partial\Omega}u(x)<t<\mathop {\max}_{\partial\Omega}u(x)$, we obtain the geometric structure of interior critical points of $u$. Precisely, when $\Omega$ is simply connected, we develop a new method to prove $\Sigma_{i = 1}^k {{m_i}}+1=N$, where $m_1,\cdots,m_k$ are the respective multiplicities of interior critical points $x_1,\cdots,x_k$ of $u$ and $N$ is the number of global maximal points of $u$ on $\partial\Omega$. When $\Omega$ is an annular domain with the interior boundary $\gamma_I$ and the external boundary $\gamma_E$, where $u|_{\gamma_I}=H,~u|_{\gamma_E}=\psi(x)$ and $\psi(x)$ has $N$ local (global) maximal points on $\gamma_E$. For the case $\psi(x)\geq H$ or $\psi(x)\leq H$ or $\mathop {\min}\limits_{\gamma_E}\psi(x)<H<\mathop {\max}\limits_{\gamma_E}\psi(x)$, we show that $\Sigma_{i = 1}^k {{m_i}} \le N$ (either $\Sigma_{i = 1}^k {{m_i}}=N$ or $\Sigma_{i = 1}^k {{m_i}}+1=N$).

\end{abstract}

{\bf Key Words:} a quasilinear elliptic equation, critical point, multiplicity, multiply connected domain.

{{\bf 2010 Mathematics Subject Classification.} 35J93; 35J25; 35B38.}

\section{Introduction and main results}
~~~~~In this paper we mainly investigate the interior critical points of solutions to the following a quasilinear elliptic equation
\begin{equation}\label{1.1}\begin{array}{l}
\begin{array}{l}
Lu=\sum\limits_{i,j=1}^{2}a_{ij}(\nabla u)\frac{\partial^2 u}{\partial x_i \partial x_j}=0~~~\mbox{in}~\Omega,
\end{array}
\end{array}\end{equation}
where $\Omega$ is a bounded, smooth and connected domain in $\mathbb{R}^{2}$,  $a_{ij}$ is smooth and $L$ is uniformly elliptic in $\Omega$.

The subject of critical points is a significant research topic for solutions of elliptic equations. Until now, there are many results about the critical points.  In 1992 Alessandrini and Magnanini \cite{AlessandriniMagnanini1} studied the geometric structure of the critical set of solutions to a semilinear elliptic equation in a planar nonconvex domain, whose boundary is composed of finite simple closed curves. They deduced that the critical set is made up of finitely many isolated critical points. In 1994, Sakaguchi \cite{Sakaguchi2} considered the critical points of solutions to an obstacle problem in a planar, bounded, smooth and simply connected domain. He showed that if the number of critical points of the obstacle is finite and the obstacle has only N local (global) maximum points, then the inequality $\Sigma_{i = 1}^k {{m_i}}+1 \le N$ (the equality $\Sigma_{i = 1}^k {{m_i}}+1=N$) holds for the critical points of one solution in the noncoincidence set, where $m_1,m_2,\cdots,m_k$ are the multiplicities of critical points $x_1,x_2,\cdots,x_k$ respectively. In 2012 Arango and G\'{o}mez \cite{ArangoGomez2} considered critical points of the solutions to a quasilinear elliptic equation with Dirichlet boundary condition in strictly convex and nonconvex planar domains respectively. If the domain is strictly convex and $u$ is a negative solution, they proved that such a critical point set has exactly one nondegenarate critical point. Moreover, they obtained the similar results of a semilinear elliptic equation in a planar annular domain, whose boundary has nonzero curvature. See \cite{AlessandriniMagnanini2,Cecchini,ChenHuang,Enciso1,Enciso2,Enciso3,Grieser,Kawohl,Kraus,Magnanini,PucciSerrin} for related results.

 Concerning the Neumann and Robin boundary value problems, there exist a few results about the critical points of solutions to elliptic equations. In 1990, Sakaguchi \cite{Sakaguchi1} proved that a solution of Poisson equation with Neumann or Robin boundary condition has exactly one critical point in a planar domain. In 2017, Deng, Liu and Tian \cite{Deng2} showed the nondegeneracy and uniqueness of the critical point of a solution to prescribed constant mean curvature equation with Neumann or Robin boundary condition in a smooth, bounded and strictly convex domain $\Omega$ of $\mathbb{R}^n(n\geq2)$.

 For the higher dimensional cases. Under the assumption of the existence of a semi-stable solution of Poisson equation $-\triangle u=f(u)$, Cabr\'{e} and Chanillo \cite{CabreChanillo} showed that the solution $u$ has exactly one nondegenerate critical point in bounded, smooth and convex domains of $\mathbb{R}^{n}(n\geq2)$. Deng, Liu and Tian \cite{Deng1} investigated the geometric structure of critical points of solutions to mean curvature equations with Dirichlet boundary condition and showed that the critical point set $K$ has exactly one nondegenerate critical point in a strictly convex domain of $\mathbb{R}^{n}(n\geq2)$ and $K$ has (respectively, has no) a rotationally symmetric critical closed surface $S$ in a concentric (respectively, an eccentric) spherical annulus domain of $\mathbb{R}^{n}(n\geq3)$.

However, as we know, there is few work on the critical points of solutions to quasilinear elliptic equations with nonhomogeneous Dirichlet boundary conditions. The goal of this paper is to study the critical points of solutions to a quasilinear elliptic equation with nonhomogeneous Dirichlet boundary conditions. Our main results are as follows.

\begin{Theorem}\label{th1.1} %(th1.1)
 Let $\Omega$ be a bounded, smooth and simply connected domain in $\mathbb{R}^{2}$. Suppose that $\psi(x)\in C^1(\overline{\Omega})$ and that $\psi$ has $N$ local maximal points on $\partial\Omega.$  Let $u$ be a non-constant solution of the following boundary value problem
\begin{equation}\label{1.2}\begin{array}{l}
\left\{
\begin{array}{l}
\sum\limits_{i,j=1}^{2}a_{ij}(\nabla u)\frac{\partial^2 u}{\partial x_i \partial x_j}=0~~~\mbox{in}~\Omega,\\
u=\psi(x)~~~\mbox{on}~ \partial\Omega.
\end{array}
\right.
\end{array}\end{equation}
Then $u$ has finite interior critical points, denoting by $x_1,x_2,\cdots,x_k$, and the following inequality holds
\begin{equation}\label{1.3}\begin{array}{l}
\begin{array}{l}
\sum\limits_{i = 1}^k {{m_i}}+1 \le N,
\end{array}
\end{array}\end{equation}
where $m_1,m_2,\cdots,m_k$ are the multiplicities of critical points $x_1,x_2,\cdots,x_k$ respectively.
\end{Theorem}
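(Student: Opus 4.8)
The plan is to reduce \eqref{1.3} to a counting argument for the connected components of the super-level sets $\Omega_t:=\{x\in\Omega:u(x)>t\}$, where $t$ ranges over $(m,M)$ with $m:=\min_{\partial\Omega}\psi$ and $M:=\max_{\partial\Omega}\psi$. First I would assemble the local and global structure. By interior elliptic regularity $u\in C^{1,\alpha}_{\mathrm{loc}}(\Omega)$, so $\tilde a_{ij}(x):=a_{ij}(\nabla u(x))$ are continuous and $u$ solves the linear uniformly elliptic equation $\sum_{i,j}\tilde a_{ij}(x)\partial_{ij}u=0$; by the Hartman--Wintner theorem (as used in \cite{AlessandriniMagnanini1}), at each interior critical point $x_i$ the gradient of $u$ vanishes to a finite order $m_i\ge1$ and $u(x)-u(x_i)$ agrees to leading order with a homogeneous harmonic polynomial of degree $m_i+1$ in suitable coordinates. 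Hence the interior critical points are isolated, so finite in number, $x_1,\dots,x_k$; and near $x_i$, with $c_i:=u(x_i)$, each of $\{u>c_i\}$ and $\{u<c_i\}$ consists of exactly $m_i+1$ open sectors at $x_i$, alternating around $x_i$. The strong maximum principle gives $m<u<M$ in $\Omega$, so every $c_i\in(m,M)$, and --- crucially using that $\Omega$ is simply connected --- forbids closed level curves of $u$ inside $\Omega$: a Jordan curve on which $u\equiv t$ would bound a disk $D\Subset\Omega$ with $u\equiv t$ on $\overline D$, forcing $u$ constant. Likewise every connected component of $\Omega_t$ must meet $\partial\Omega$, for otherwise $u\equiv t$ on it.

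The heart of the proof --- and the step I expect to be the main obstacle --- is a separation lemma: at each interior critical point $x_i$, the $m_i+1$ sectors of $\{u>c_i\}$ at $x_i$ lie in $m_i+1$ \emph{distinct} connected components of $\{u>c_i\}$, hence also of $\{u>c_i+\varepsilon\}$ for small $\varepsilon>0$. I would argue by contradiction. If two such sectors $A,B$ lay in one component $W$ of $\{u>c_i\}$, take $a\in A$, $b\in B$ near $x_i$, a simple path $\gamma\subset W$ from $a$ to $b$, and curves from $a$ to $x_i$ inside $A\cup\{x_i\}$ and from $x_i$ to $b$ inside $B\cup\{x_i\}$; together these bound a Jordan curve $J$ with $u\ge c_i$ on $J$ and $u=c_i$ on $J$ only at $x_i$. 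Since $J\subset\Omega$ and $\Omega$ is simply connected, the bounded complementary component $D$ of $J$ satisfies $\overline D\subset\Omega$, so the minimum principle forces $u\ge c_i$ on $\overline D$. But near $x_i$, $D$ fills at least one of the two angular regions cut out by the arcs of $J$ entering $A$ and $B$, and since the $m_i+1$ super-level sectors alternate with $m_i+1$ sub-level sectors around $x_i$, each of those two angular regions contains a full sector on which $u<c_i$ --- contradicting $u\ge c_i$ on $\overline D$. (When several critical points share a value and lie on one connected component $\Gamma$ of that level set, the same Jordan-curve argument applied to $\Gamma$ shows the super-level sectors meeting $\Gamma$ lie in $1+\sum_{x_j\subset\Gamma}m_j$ distinct components just above the critical value.)

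Finally I would run the count on the number $n^+(t)$ of connected components of $\Omega_t$, which is $0$ for $t>M$, equals $1$ for $t\le m$ (then $\Omega_t$ is $\Omega$ with finitely many small boundary caps removed, hence connected), and is piecewise constant, jumping only at the finitely many critical values of $u|_{\overline\Omega}$ --- the $c_i$ together with the local extremal values of $\psi$ on $\partial\Omega$. As $t$ decreases, $n^+$ can increase only at a ``birth'', which occurs near a point that is a local maximum of $u$ on $\overline\Omega$; by the strong maximum principle no such point is interior, so it lies on $\partial\Omega$ and is then a local maximum of $\psi$, of which there are only $N$, each accounting for at most one birth. Thus the total increase of $n^+$ over $[m,M]$ is at most $N$. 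It can decrease only at ``merges'', and by the separation lemma the merge at an interior critical value decreases $n^+$ by exactly the sum of the multiplicities of the critical points at that value, so the decrease of $n^+$ attributable to interior critical values is exactly $\sum_{i=1}^k m_i$. Since $n^+$ passes from $0$ (at $t>M$) to $1$ (at $t<m$), its total decrease equals its total increase minus $1$, whence $\sum_{i=1}^k m_i\le(\text{total increase})-1\le N-1$, i.e. \eqref{1.3}. The remaining care lies in making ``birth'' and ``merge'' precise via a choice of regular values and the $1$-manifold structure of the level sets $\{u=t\}$, in the bookkeeping when critical points share a level-set component, and in verifying that exactly one component is born at each relevant boundary maximum.
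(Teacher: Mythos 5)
Your strategy is sound and it is genuinely different in organization from the paper's. The paper proves \eqref{1.3} by a static count at critical levels: Lemmas \ref{le2.1}--\ref{le2.2} give the separation of the $m_i+1$ sectors (your ``separation lemma'' is exactly this, proved by the same Jordan-curve/maximum-principle mechanism), Lemma \ref{le2.4} handles by induction the case of several critical points sharing a value (your parenthetical about a level-set component $\Gamma$ carrying $1+\sum_{x_j\in\Gamma}m_j$ super-level components is precisely its $q=1$ case), and then the proof of Theorem \ref{th1.1} splits into the case of a single critical value versus the general case, the latter handled by an induction over the ordered critical values using the slab sets $F_{j_n}$ of components of $\{u(x_{j_i})<u<u(x_{j_{i+1}})\}$, each of which is charged to a boundary local extremum. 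Your monotone sweep of $n^+(t)$ with births/merges replaces that two-case induction by a single uniform bookkeeping identity ($\text{merges}=\text{births}-1\le N-1$, $\text{merges}\ge\sum m_i$), which is arguably cleaner and avoids the slab construction; the price is that all the care is pushed into the claims that $n^+$ jumps only at finitely many values, that each of the $N$ boundary maxima accounts for at most one birth (this needs the collar/maximizer argument: the maximum of $u$ over the closure of a newly born component is attained at a boundary point which is a local maximum of $\psi$, and distinct births get distinct such points), and that the drop of $n^+$ at an interior critical value is at least the total multiplicity there.

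Two places need more than you wrote. First, ``the interior critical points are isolated, so finite in number'' does not follow: Hartman--Wintner gives isolation in $\Omega$ only, and critical points could a priori accumulate at $\partial\Omega$; the paper devotes Lemma \ref{le2.3} to finiteness, and in your framework you must either reproduce such an argument or run your count on an arbitrary finite subcollection of critical points (which also removes the circularity, since your sweep presupposes finitely many jump values). Second, the assertion that a merge at an interior critical value decreases $n^+$ by \emph{exactly} the sum of the multiplicities at that value is not justified when several critical points share the value and two distinct level-set clusters are adjacent to a common super-level component; what you need (and all you need) is the inequality ``decrease $\ge\sum$ multiplicities,'' which follows because the incidence graph between clusters and adjacent super-level components contains no cycle in a simply connected domain (a cycle would enclose a sub-level component, contradicting Lemma \ref{le2.1}), so it is a forest and the Euler count gives the bound. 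This is exactly the content of the $q\ge2$ case of the paper's Lemma \ref{le2.4}; with these two repairs your argument goes through and yields \eqref{1.3}.
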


\begin{Theorem}\label{th1.2} %(th1.1)
Let $\Omega$ be a bounded, smooth and simply connected domain in $\mathbb{R}^{2}$. Suppose that $\psi(x)\in C^1(\overline{\Omega})$ and that $\psi$ has only $N$ global maximal points and $N$ global minimal points on $\partial\Omega$, i.e., all the maximal and minimal points of $\psi$ are global. Let $u$ be a non-constant solution of (\ref{1.2}). Then $u$ has finite interior critical points and
\begin{equation}\label{1.4}\begin{array}{l}
\begin{array}{l}
\sum\limits_{i = 1}^k {{m_i}}+1= N,
\end{array}
\end{array}\end{equation}
where $m_i$ is as in Theorem \ref{th1.1}.
\end{Theorem}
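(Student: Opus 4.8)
The plan is to obtain \eqref{1.4} by matching the upper bound already contained in Theorem \ref{th1.1} with a lower bound extracted from a global analysis of the super-level sets of $u$. In the situation of Theorem \ref{th1.2} every local maximal point of $\psi$ on $\partial\Omega$ is one of the $N$ global maximal points, so Theorem \ref{th1.1} applies verbatim and gives $\sum_{i=1}^{k}m_i+1\le N$. Hence it suffices to prove the reverse inequality $\sum_{i=1}^{k}m_i\ge N-1$.

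First I would fix notation and record the structural facts that drive the argument. Since $Lu=0$ has no zeroth–order term, the maximum principle gives $M:=\max_{\overline\Omega}u=\max_{\partial\Omega}\psi$ and $m:=\min_{\overline\Omega}u=\min_{\partial\Omega}\psi$, both attained on $\partial\Omega$ precisely at the $N$ global maximal, respectively minimal, points of $\psi$; in particular $u$ has no interior local extremum. By Hopf's lemma the outward normal derivative of $u$ does not vanish at those boundary extrema, while at the remaining boundary points $\psi$ is strictly monotone along $\partial\Omega$ (this is exactly where the hypothesis ``all extrema of $\psi$ are global'' enters), so $\partial_T u\ne 0$ there; consequently $\nabla u\ne 0$ on $\partial\Omega$ and all critical points of $u$ lie in $\Omega$. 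From the Alessandrini–Magnanini type analysis recalled in the earlier sections, $u$ has finitely many interior critical points $x_1,\dots,x_k$ of finite multiplicities $m_1,\dots,m_k$, and near $x_i$ the level set $\{u=u(x_i)\}$ is the union of $m_i+1$ simple arcs through $x_i$ crossing pairwise transversally; equivalently, in a small punctured disc around $x_i$ the set $\{u>u(x_i)\}$ has exactly $m_i+1$ connected components.

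Next I would run the level-set count. For $t\in(m,M)$ set $\Omega^t=\{x\in\Omega:u(x)>t\}$ and let $c(t)$ be its number of connected components. I claim $c(t)=N$ for $t$ near $M$, $c(t)=1$ for $t$ near $m$, and $c$ is non-increasing on $(m,M)$ with a jump of size at most $m_i$ across each critical value $u(x_i)$. For $t$ near $M$, the compact set $\{u\ge t\}$ lies in an arbitrarily small neighbourhood of the $N$ boundary maxima and $\Omega^t$ is nonempty near each of them, so $c(t)=N$; for $t$ near $m$, $\{u\le t\}$ lies in small boundary caps around the $N$ minima and each of its components meets $\partial\Omega$ (otherwise $u$ would be constant there by the maximum principle), so $\Omega^t$ is connected and $c(t)=1$. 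On $(m,M)$ the topological type of $\Omega^t$ is locally constant off the critical values $u(x_i)$: no component can be created or destroyed, as that would require an interior local extremum of $u$ or a turning point of $\psi$ on $\partial\Omega$ at a level in $(m,M)$, neither of which occurs; and across $t_i=u(x_i)$ the only change, read off from the local model, is that the $m_i+1$ components of $\Omega^t$ meeting a small disc about $x_i$ coalesce into one as $t$ decreases through $t_i$, dropping $c$ by at most $m_i$ (summing over critical points on the same level if needed). Therefore $N-1=c(M^-)-c(m^+)\le\sum_{i=1}^{k}m_i$, which is the required inequality; with Theorem \ref{th1.1} this proves \eqref{1.4}. (Alternatively, one verifies via the maximum principle that every component of $\Omega^t$ is simply connected, which upgrades the jump estimate to equality and yields \eqref{1.4} directly, without invoking Theorem \ref{th1.1}.)

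I expect the main obstacle to be making this last point rigorous: that as $t$ decreases the components of $\Omega^t$ evolve only by the prescribed local coalescence at the interior critical points and by nothing at all near $\partial\Omega$ while $t\in(m,M)$. This rests on the precise local description of level sets of solutions of the quasilinear operator $L$ at a critical point — i.e. the pseudo-analytic representation underlying the Alessandrini–Magnanini theory — combined with a careful use of the strong maximum principle and Hopf's lemma to rule out interior extrema of $u$ and boundary critical points. Once these are in hand, the endpoint identifications $c(M^-)=N$ and $c(m^+)=1$ and the counting itself are routine bookkeeping.
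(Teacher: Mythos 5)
Your proposal is correct in outline but takes a genuinely different route from the paper. The paper does not deduce Theorem \ref{th1.2} from Theorem \ref{th1.1}; it proves the equality directly through the three ``just right''s: all interior critical values coincide (otherwise the components of $\{x\in\Omega:u(x)<u(x_1)\}$ and $\{x\in\Omega:u(x)<u(x_2)\}$ would produce boundary minima with two different values of $\psi$), the critical points together with the level lines through them form a single connected set, and every component of the super-level (sub-level) set at the critical level carries exactly one global maximum (minimum) of $\psi$; Lemma \ref{le2.4} then counts the components exactly as $\sum_{i=1}^k m_i+1$ and matches them with the $N$ boundary extrema. You instead sweep the level $t$ from $\max_{\partial\Omega}\psi$ down to $\min_{\partial\Omega}\psi$ and track the number $c(t)$ of components of $\{x\in\Omega:u(x)>t\}$: $c=N$ near the top, $c=1$ near the bottom, $c$ can only decrease, and only at interior critical values by at most $m_i$ (summed over critical points on the level), giving $\sum_{i=1}^k m_i\ge N-1$, which you combine with the inequality of Theorem \ref{th1.1}. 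This Morse-theoretic shortcut is legitimate and correctly isolates where the hypothesis ``all extrema of $\psi$ are global'' enters (no births or merges of components through $\partial\Omega$ for $t$ strictly between the extreme values, since $\psi$ is strictly monotone between consecutive extrema and $\nabla u\neq0$ on $\partial\Omega$ by Hopf); what it buys is economy, since Theorem \ref{th1.1} is recycled rather than redone, at the cost of concentrating all the work in the sweep lemma you yourself flag as the main obstacle. Two spots need more care than your sketch provides: first, $c(t)=1$ near the minimum does not follow merely from ``each component of $\{u\le t\}$ meets $\partial\Omega$''; you must also exclude a component of $\{u>t\}$ trapped inside one of the boundary caps, which again uses the strict monotonicity of $\psi$ on the two sides of each minimum (a one-sided collar of the boundary arc where $\psi>t$ connects any such piece to the large component); second, the parenthetical claim that simple connectivity of the components upgrades the drop estimate to equality, so that Theorem \ref{th1.1} could be dispensed with, is not immediate --- to see that the $m_i+1$ local sectors at $x_i$ lie in pairwise distinct components one needs exactly the arguments of Lemmas \ref{le2.1}--\ref{le2.4}. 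Finally, note that the paper's route yields extra structural information (a single critical level, one connected critical cluster, and the exact correspondence between components and boundary extrema) which your counting argument does not record.
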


\begin{Theorem}\label{th1.3} %(th1.3)
 Let $\Omega$ be a bounded smooth annular domain with the interior boundary $\gamma_I$ and the external boundary $\gamma_E$ in $\mathbb{R}^{2}$. Suppose that $\psi(x)\in C^1(\overline{\Omega}),$ $H$ is a given constant, $\psi(x)\geq H$ and that $\psi$ has $N$ local maximal points on $\gamma_E.$  Let $u$ be a non-constant solution of the following boundary value problem
\begin{equation}\label{1.5}\begin{array}{l}
\left\{
\begin{array}{l}
\sum\limits_{i,j=1}^{2}a_{ij}(\nabla u)\frac{\partial^2 u}{\partial x_i \partial x_j}=0~~~\mbox{in}~\Omega,\\
u|_{\gamma_I}=H,~~u|_{\gamma_E}=\psi(x).
\end{array}
\right.
\end{array}\end{equation}
Then $u$ has finite interior critical points and
\begin{equation}\label{1.6}\begin{array}{l}
\begin{array}{l}
\sum\limits_{i = 1}^k {{m_i}} \le N,
\end{array}
\end{array}\end{equation}
where $m_i$ is as in Theorem \ref{th1.1}.
\end{Theorem}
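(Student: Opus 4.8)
The plan is to run a Morse-type continuation argument on the super-level sets $\Omega_t:=\{x\in\Omega:u(x)>t\}$ as $t$ decreases through the interval $(H,M)$, where $M:=\max_{\gamma_E}\psi=\max_{\overline\Omega}u$. First I would assemble the facts that the preliminary analysis of problem (\ref{1.5}) supplies: since $u|_{\gamma_I}=H$, $\psi\ge H$, and $u$ is non-constant, the strong maximum principle gives $u>H$ in $\Omega$ (so in particular $H<u<M$ throughout $\Omega$) and $u$ has no interior local extremum; the Hopf lemma gives $\nabla u\ne 0$ on $\gamma_I$; and the interior critical set is finite, consisting of isolated points $x_1,\dots,x_k$ at each of which the Hartman--Wintner type normal form for the linear equation $\sum_{i,j}a_{ij}(\nabla u(x))\,\partial_{ij}u=0$ yields $u(x)-u(x_i)=P_{m_i+1}(x-x_i)+o(|x-x_i|^{m_i+1})$ with $P_{m_i+1}$ a non-zero homogeneous harmonic polynomial of degree $m_i+1$. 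In particular $\{u=u(x_i)\}$ is, near $x_i$, a union of $m_i+1$ simple arcs through $x_i$, and $\{u>u(x_i)\}$ meets a small punctured disc about $x_i$ in exactly $m_i+1$ sectors.

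The core of the argument is a structural description of $\Omega_t$ for a regular value $t\in(H,M)$: the closure of every connected component of $\Omega_t$ must meet $\gamma_E$ at a point where $\psi>t$ (otherwise $u$ would attain an interior maximum on that component), and no component can reach $\gamma_I$, where $u=H<t$; moreover $\Omega_t$ has no bounded complementary component other than (possibly) the one enclosed by $\gamma_I$, since any other such ``pocket'' would carry $u\le t$ with $u=t$ on its boundary, contradicting the minimum principle. Hence $\chi(\Omega_t)=b_0-b_1$, where $b_0\ge 1$ is the number of components and $b_1\in\{0,1\}$ the number of holes, so $\chi(\Omega_t)\ge 0$ for every such $t$. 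Now I would let $t$ decrease from $M+\varepsilon$, where $\Omega_t=\varnothing$ and $\chi=0$, down to $H+\varepsilon$. The homotopy type of $\Omega_t$ changes only when $t$ crosses (i) a local maximum value of $\psi$ on $\gamma_E$, where a small disc component appears and $\chi$ increases by $1$; (ii) a local minimum value of $\psi$ on $\gamma_E$ lying above $H$, where two boundary slivers of $\Omega_t$ merge — joining two components or adding a loop — and $\chi$ decreases by $1$; or (iii) an interior critical value $u(x_i)$, where the $m_i+1$ sectors around $x_i$ coalesce, the local Euler characteristic dropping from $m_i+1$ to $1$ so that $\chi$ decreases by $m_i$. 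Summing all the increments over the sweep gives
\[
\chi(\Omega_{H+\varepsilon})\;=\;N\;-\;Q\;-\;\sum_{i=1}^{k}m_i,
\]
where $N$ is the number of local maxima of $\psi$ on $\gamma_E$ and $Q\ge 0$ the number of its local minima of value $>H$. Since $Q\ge 0$ and $\chi(\Omega_{H+\varepsilon})\ge 0$, this yields $\sum_{i=1}^{k}m_i=N-Q-\chi(\Omega_{H+\varepsilon})\le N$, which is (\ref{1.6}).

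The step demanding the most care is the verification of the three local increments together with the disposal of non-generic configurations. The assertion in (iii) — that crossing an interior critical point of multiplicity $m$ attaches precisely $m$ one-handles to the super-level set — I would deduce from the normal form above by a Mayer--Vietoris computation in a small disc about $x_i$; (i) and (ii) are the corresponding local pictures at a critical point of $\psi|_{\gamma_E}$, and they presuppose that $\{u=t\}$ meets $\gamma_E$ transversally for $t$ near the critical value, the delicate case being a boundary critical point of $\psi$ at which $\partial_\nu u$ also vanishes. I expect to dispose of coincident critical values and of such boundary degeneracies by a small perturbation of $t$, using the finiteness and isolatedness of the critical set to reduce to the generic picture; this is where the fine analysis of the connected components of $\Omega_t$ does the real work. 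The same bookkeeping applies in the simply connected case, with $\Omega_{H+\varepsilon}$ replaced by $\Omega_{\min_{\partial\Omega}\psi+\varepsilon}$, which is connected and simply connected, hence has $\chi=1$; this gives $\sum_{i=1}^k m_i+1=N-Q\le N$, recovering Theorems \ref{th1.1} and \ref{th1.2} (with equality precisely when every local minimum of $\psi$ is global).
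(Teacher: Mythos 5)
Your route is genuinely different from the paper's: the paper proves Theorem \ref{th1.3} by counting connected components of super-/sub-level sets (Lemmas \ref{le3.1}--\ref{le3.5}), inducting first on the number of critical points at a common level and then on the distinct critical values, and converting each component into a local extremum of $\psi$ on $\gamma_E$ via the strong maximum principle; you instead run an Euler-characteristic sweep in $t$. Several of your ingredients are solid and match the paper's: every component of $\{u>t\}$ for $t\in(H,M)$ must reach $\gamma_E$ and cannot reach $\gamma_I$; a bounded ``pocket'' of the complement other than the one containing $\gamma_I$ is excluded by the minimum principle, so $\chi(\Omega_t)\ge 0$; and the Hartman--Wintner normal form does give that crossing an interior critical point changes $\chi$ by exactly $-m_i$ (a Mayer--Vietoris computation, independent of how the $m_i+1$ local sectors are identified globally).

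The genuine gap is in your boundary bookkeeping, where you assert exact increments that the hypotheses do not support, and your proposed repair does not repair them. The $+1$ at a local maximum $q$ of $\psi$ and the $-1$ at a local minimum above $H$ presuppose transversality of $\{u=t\}$ with $\gamma_E$ near the crossing, i.e.\ essentially a sign condition on $\partial_\nu u$ at $q$; Hopf's lemma gives nothing at a point that is only a maximum along the boundary, and $\partial_\nu u$ may vanish there (moreover $\psi$ is only $C^1$, so $\psi|_{\gamma_E}$ may have infinitely many non-extremal critical points and degenerate extrema, so ``the homotopy type changes only at finitely many listed values'' is not guaranteed). Crucially, ``a small perturbation of $t$'' cannot remove such a degeneracy: the degeneracy sits at a fixed boundary point and affects every level near its critical value. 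What saves the argument is that you only need one-sided bounds: as $t$ decreases, $\chi$ can increase only through the birth of a new component, a newborn component's maximum is attained at a boundary point which (by a small ball-cap/maximum-principle argument) must be a local maximum point of $\psi$ on $\gamma_E$, and each such point accounts for at most one birth; all other events (boundary minima, degenerate boundary points, merges, hole creation) do not increase $\chi$, and holes never disappear as $t$ decreases. This yields $0\le\chi(\Omega_{H+\varepsilon})\le N-\sum_i m_i$, i.e.\ (\ref{1.6}), but it is an inequality, not your claimed identity $\chi=N-Q-\sum m_i$, which is false in general (a boundary maximum with inward-pointing gradient contributes $0$). A second, smaller gap: you take the finiteness of the interior critical set as a supplied fact, whereas it is part of the conclusion of Theorem \ref{th1.3} (the paper proves it as Lemma \ref{le3.4}); Hartman--Wintner gives only isolatedness, so your sweep must either prove finiteness first (e.g.\ by the paper's argument that infinitely many critical points would force infinitely many components, hence infinitely many boundary maxima) or be reformulated so that every finite subfamily of critical points already forces $\sum m_i\le N$.
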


\begin{Theorem}\label{th1.4} %(th1.4)
Let $\Omega$ be a bounded smooth annular domain with the interior boundary $\gamma_I$ and the external boundary $\gamma_E$ in $\mathbb{R}^{2}$. Suppose that $\psi(x)\in C^1(\overline{\Omega}),~\psi(x)\geq H$ and that $\psi$ has only $N$ global maximal points and $N$ global minimal points on $\gamma_E$, i.e., all the maximal and minimal points of $\psi$ are global. Let $u$ be a non-constant solution of (\ref{1.5}). Then $u$ has finite interior critical points, and either
\begin{equation}\label{1.7}\begin{array}{l}
\begin{array}{l}
\sum\limits_{i = 1}^k {{m_i}}= N,
\end{array}
\end{array}\end{equation}
or
\begin{equation}\label{1.8}\begin{array}{l}
\begin{array}{l}
\sum\limits_{i = 1}^k {{m_i}}+1= N,
\end{array}
\end{array}\end{equation}
where $m_i$ is as in Theorem \ref{th1.1}.
\end{Theorem}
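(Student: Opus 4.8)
The plan is to run, for the annular case, the same super-level-set analysis that underlies Theorems 1.1–1.3 (in the spirit of Sakaguchi and of Alessandrini–Magnanini), extract from it an exact identity $\sum_i m_i = N-D$ with a nonnegative ``defect'' $D$, and then use the extra hypothesis (all extrema of $\psi|_{\gamma_E}$ are global) to force $D\in\{0,1\}$. First I would recall the structural facts from the earlier part of the paper: the interior critical points $x_1,\dots,x_k$ are isolated and finite, near $x_i$ the set $\{u=u(x_i)\}$ consists of $m_i+1$ smooth arcs crossing transversally, so $\{u>u(x_i)\}$ has exactly $m_i+1$ local ``sectors'' at $x_i$; by the maximum principle $u$ has no interior local extremum, $\min_{\overline\Omega}u=H$ is attained on $\gamma_I$ (since $\psi\ge H$) and $\max_{\overline\Omega}u=M:=\max_{\gamma_E}\psi$ is attained on $\gamma_E$, and by Hopf's lemma the normal derivative of $u$ is positive on $\gamma_I$ while $\nabla u$ points out of $\Omega$ at each global maximum of $\psi$. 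Write $\mu:=\min_{\gamma_E}\psi\ (\ge H)$. Theorem 1.3 already furnishes $\sum_i m_i\le N$, so the task is the matching lower bound $\sum_i m_i\ge N-1$.

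For the bookkeeping, for $t\in(H,M)$ put $\Omega_t:=\{x\in\Omega:u(x)>t\}$ and follow the topological Euler characteristic $\chi(\Omega_t)=(\text{number of components})-(\text{total number of holes})$. Two facts make this tractable: by the maximum principle no component of $\{u\le t\}$ is compactly contained in $\Omega$, so every hole of $\Omega_t$ must encircle $\gamma_I$ and hence at most one component of $\Omega_t$ is non–simply connected; and $\chi(\Omega_t)$ is locally constant, jumping only when $t$ crosses an interior critical value of $u$ or a critical value of $\psi|_{\gamma_E}$, which by hypothesis are only $M$ and $\mu$. One then computes the endpoints and jumps: $\chi(\Omega_{M-\varepsilon})=N$ ($N$ disc-caps, one at each global maximum of $\psi$); $\chi(\Omega_{H+\varepsilon})=0$ ($\Omega$ minus a thin collar of $\gamma_I$ is an annulus; the case $\mu=H$ needs only a cosmetic change); crossing an interior critical value $c$ downward drops $\chi$ by $\sum_{u(x_i)=c}m_i$ (the $m_i+1$ sectors at $x_i$ coalesce); and crossing $t=\mu$ downward drops $\chi$ by $D:=\#\{\psi\text{-minima }q:\nabla u(q)\text{ points out of }\Omega\}$, the remaining $\psi$-minima (where $\nabla u$ points into $\Omega$) leaving $\chi$ unchanged, as one verifies on the elementary local models at a boundary minimum of $\psi$. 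Adding the jumps between $M$ and $H$ yields
\[
\sum_{i=1}^k m_i \;=\; N-D,\qquad D\in\mathbb Z_{\ge0},
\]
with the degenerate sub-cases (an interior critical value equal to $\mu$, a $\psi$-minimum at which even the normal derivative of $u$ vanishes) absorbed by the same local analysis.

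The crux is then $D\le1$. If $\mu=H$ each $\psi$-minimum is a global minimum of $u$, so $\nabla u$ points inward there by Hopf and $D=0$, giving (1.7). If $\mu>H$, the decisive rigidity is that $\{u<\mu\}$ is connected: any of its components must meet $\partial\Omega$ (no interior local minimum), cannot meet $\gamma_E$ (where $u=\psi\ge\mu$), so meets $\gamma_I$; since $\gamma_I$ is connected and lies in $\{u<\mu\}$ there is a single component $V$, containing an annular collar of $\gamma_I$, and $V$ accumulates on $\gamma_E$ precisely at the ``outward'' $\psi$-minima. If $D\ge2$, take two of them, $q_1,q_2$, and join them by a simple arc $\sigma\subset\overline V\subset\{u\le\mu\}$ (interior in $\Omega$), which we may take homotopic rel endpoints (in $\overline\Omega$) to one of the two arcs $A\subset\gamma_E$ between $q_1$ and $q_2$; then $\sigma\cup A$ bounds a topological disc $D_0\subset\overline\Omega$ with $u\le\mu$ on $\sigma$ and $u=\psi\ge\mu$ on $A$, so $\min_{\overline{D_0}}u=\mu$ is attained only on $\partial D_0$ (an interior minimizer would force $u\equiv\mu$ nearby, hence everywhere by unique continuation). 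Combining this with the fact that the level curve $\{u=\mu\}$ enters $\Omega$ at each outward $q_j$ while meeting $\gamma_E$ only at the $\psi$-minima — and, on the complementary annular piece, with the critical-point count of Theorem 1.3 — one is led to an interior local extremum of $u$ or to a critical point of $\psi$ on $\gamma_E$ that is not a global extremum, both of which are excluded. Hence $D\le1$, so $\sum_i m_i$ equals $N$ or $N-1$, i.e. (1.7) or (1.8).

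Everything except the last step is routine once the level-set structure behind Theorems 1.1–1.3 is in hand. The main obstacle is establishing $D\le1$: ruling out two distinct ``outward'' minima of $\psi$, i.e. showing that two independent ways in which $\{u<\mu\}$ reaches $\gamma_E$ are incompatible with the connectedness of $\{u<\mu\}$ together with the absence of interior local extrema. This is exactly the point at which the topology of the annulus and the constancy of the datum $u|_{\gamma_I}=H$ interact, and it is the heart of the proof; it is also the reason one gets the dichotomy $\{N-1,N\}$ rather than the single value $N-1$ of the simply connected case.
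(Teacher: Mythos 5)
Your Euler--characteristic sweep is a genuinely different organization from the paper's argument (the paper proves the three ``just right''s --- all interior critical values coincide, the critical points together with their level lines form one connected set, each relevant component of the super-/sub-level set carries exactly one global extremum of $\psi$ --- and then counts components via Lemma \ref{le3.5}), and your bookkeeping identity $\sum_{i}m_i=N-D$, with $D$ the number of global minima of $\psi$ on $\gamma_E$ at which $\nabla u$ points outward, is consistent with those counting lemmas and with the local models at boundary minima. Up to that point the proposal is a cleaner reformulation: one monotone sweep in $t$ instead of the paper's case analysis.

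The gap is precisely the step you flag as the crux: $D\le 1$ is not proved, and it cannot be proved from the stated hypotheses, because it fails. Take $\Omega=\{1<|x|<2\}$, $H=0$, $\psi=1+\epsilon(1-\cos 2\theta)$ on $\gamma_E$, and let $u$ be harmonic, i.e. $u=(1+\epsilon)\frac{\log r}{\log 2}+\frac{4\epsilon}{15}\left(r^{-2}-r^{2}\right)\cos 2\theta$. Here $\psi\ge 1>H$ has exactly $N=2$ global maxima and $2$ global minima; for small $\epsilon>0$ one checks that $u_r>0$ at every point where $u_\theta=0$, so $u$ has no interior critical point, while $\partial u/\partial\nu>0$ at both minima, so $D=2$ and your identity gives $\sum_i m_i=N-D=0$. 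In particular the connected set $\{u<\mu\}$ does reach $\gamma_E$ at two distinct ``outward'' minima by wrapping around $\gamma_I$, and the contradiction you sketch (forcing an interior extremum of $u$ or a non-global extremum of $\psi$) never materializes: two outward minima are perfectly compatible with the annular topology and with $|\nabla u|>0$ in $\Omega$. Note that this is the same configuration the paper's own proof does not handle --- its Case 1, Step 1 asserts an interior critical point exists ``as in Step 1 of Theorem \ref{th1.2}'', but that simply connected argument breaks exactly here, since components of $\{u>t\}$ can merge across boundary minima of $\psi$ rather than at interior critical points. So you have isolated the real difficulty, but as written the proposal does not establish the dichotomy between $\sum_i m_i=N$ and $\sum_i m_i+1=N$, and no bound $D\le 1$ of the kind your argument needs is available without further hypotheses.
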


\begin{Theorem}\label{th1.5} %(th1.5)
 Let $\Omega$ be a bounded smooth annular domain with the interior boundary $\gamma_I$ and the external boundary $\gamma_E$ in $\mathbb{R}^{2}$. Suppose that $\psi(x)\in C^1(\overline{\Omega}),$ $H$ is a given constant, $\mathop {\min}\limits_{\gamma_E}\psi(x)<H<\mathop {\max}\limits_{\gamma_E}\psi(x)$ and that $\psi$ has $N$ local maximal points on $\gamma_E.$ Let $u$ be a non-constant solution of (\ref{1.5}). Then $u$ has finite interior critical points and
\begin{equation}\label{1.9}\begin{array}{l}
\begin{array}{l}
\sum\limits_{i = 1}^k {{m_i}} \le N,
\end{array}
\end{array}\end{equation}
where $m_i$ is as in Theorem \ref{th1.1}.
\end{Theorem}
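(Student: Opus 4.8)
The plan is to reduce Theorem~\ref{th1.5} to the already-established structural analysis of super-level sets, adapting the annular argument to the mixed boundary situation $\mathop{\min}_{\gamma_E}\psi < H < \mathop{\max}_{\gamma_E}\psi$. First I would record the basic facts that hold in all of these theorems: by the uniform ellipticity of $L$ together with the smoothness of the $a_{ij}$, any non-constant solution $u$ of \eqref{1.5} satisfies a linear uniformly elliptic equation with smooth coefficients once we freeze $\nabla u$ along $u$, so $\nabla u$ itself satisfies an elliptic system; hence interior critical points are isolated, each carries a well-defined finite multiplicity $m_i$ (the order of vanishing of $\nabla u$, equivalently one less than the number of nodal lines of $u - u(x_i)$ emanating from $x_i$), and there are finitely many of them in $\overline\Omega$ because $u$ is smooth up to the boundary and, by the strong maximum principle / Hopf lemma, $\nabla u \neq 0$ on $\gamma_I \cup \gamma_E$ except possibly at the finitely many boundary critical points of $\psi$ on $\gamma_E$. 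This gives the ``finitely many interior critical points'' part of the statement; the content is the inequality \eqref{1.9}.

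Next I would set up the counting via super-level sets exactly as in the simply connected and $\psi \geq H$ annular cases, but now choosing the level carefully. Let $M = \mathop{\max}_{\gamma_E}\psi$; pick a regular value $t$ with $H < t < M$ close to $M$, so that $\Omega_t = \{x \in \Omega : u(x) > t\}$ avoids $\gamma_I$ entirely (since $u|_{\gamma_I} = H < t$) and meets $\gamma_E$ in exactly the arcs near the $N$ local maximal points of $\psi$ that exceed level $t$ — for $t$ near $M$ these are the global maxima, say $N' \leq N$ of them; then I would let $t$ decrease and track the topology. Each connected component $\omega$ of $\Omega_t$ is, by the maximum principle applied to $u$ on $\omega$ (whose boundary consists of arcs of $\gamma_E$ where $u = \psi \geq t$ and arcs of the level curve $\{u = t\}$), a region whose closure is a topological disk unless it swallows a ``hole'' — but the only hole available is the one bounded by $\gamma_I$, and $u = H < t$ there, so in fact every component of $\Omega_t$ is simply connected as long as $t > H$. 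This is the crucial difference that makes the analysis simpler than for general annular data: for $t > H$ the sublevel behaviour near $\gamma_I$ never interferes. Then I would apply the Euler-characteristic / Morse-theory bookkeeping for the gradient field on $\overline{\Omega_t}$, exactly as in Sakaguchi \cite{Sakaguchi2} and in the proof of Theorem~\ref{th1.1}: as $t$ decreases from just below $M$ to just above $H$, the number of components of $\Omega_t$ starts at $N'$, can only decrease when two components merge, and every merging (more generally every change of topology) is forced by passing an interior critical point, with a critical point of multiplicity $m_i$ accounting for a drop by $m_i$ in the count (components merging, each merge of $m_i+1$ components at once costing $m_i$). Since the count cannot go below $1$, summing the drops gives $\sum_{i} m_i \leq N' - 1 \leq N - 1 < N$, but I would have to be careful about critical points on the ``other side'' — those lying in $\{u < H\}$ near $\gamma_I$, and those that appear only when the level passes below $H$. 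Because $\mathop{\min}_{\gamma_E}\psi < H$, by the maximum principle $u$ takes values below $H$ somewhere on $\gamma_E$ as well, so the sublevel set $\{u < H\}$ is itself a nontrivial region touching both boundary components; interior critical points can sit there too, and those must also be counted in $\sum m_i$.

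Hence the heart of the proof — and the step I expect to be the main obstacle — is handling the critical points in $\{u < H\}$ and, more delicately, those exactly at level $H$ or straddling it, i.e. showing that the total contribution of all interior critical points, not just those in $\{u > H\}$, is still bounded by $N$. Here I would argue as follows: consider instead the family of super-level sets $\Omega_t$ for $t$ ranging over the whole interval $(\mathop{\min}_{\gamma_E}\psi, M)$. For $t$ just above $\mathop{\min}_{\gamma_E}\psi$, $\Omega_t$ is ``almost all'' of $\Omega$ and in particular contains $\gamma_I$ in its closure, so it is connected but \emph{not} simply connected — it is an annular region with one hole (the $\gamma_I$ hole). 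As $t$ increases past $H$, at some point the level curve $\{u = t\}$ must separate $\gamma_I$ from the bulk, and the component containing the $\gamma_I$-collar pinches off and disappears; this topological transition — going from genus/first-Betti-number $1$ down to $0$ — is itself forced through an interior critical point, and it ``uses up'' exactly $1$ unit of multiplicity. This is precisely the mechanism that produces the $+1$ in \eqref{1.6}/\eqref{1.8} but here it is absorbed: starting from the annular (not disk) topology at the bottom, the component count plus (first Betti number) obeys the inequality $N' + \#\{\text{holes at the bottom}\} - 1 \ge \sum_i m_i$ reorganised so that the hole ``pays for'' one merging event, yielding $\sum_i m_i \le N' \le N$ rather than $\le N-1$. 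Making this rigorous requires the careful component-tracking lemma underlying Theorems~\ref{th1.1}--\ref{th1.4} — namely that the alternating sum of (number of components of $\Omega_t$) as $t$ sweeps down records $1$ plus the total multiplicity of interior critical points of $u$, corrected by the topology of $\Omega$ — applied with the annular correction term, and checking that no critical point is double-counted at $t = H$ (which follows because $H$ can be taken a regular value of $u$ after, if necessary, an arbitrarily small perturbation, or because at $t=H$ the level set meets $\gamma_I$ transversally by the Hopf lemma). I would then assemble these pieces: finiteness and isolation of critical points from the elliptic system for $\nabla u$; the structural lemma on components of $\{u > t\}$; the annular-topology correction; and the conclusion $\sum_{i=1}^k m_i \le N$.
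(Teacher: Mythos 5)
Your overall strategy (a continuous Morse-type sweep of the super-level sets $\{u>t\}$ with an Euler-characteristic/component count, letting the single hole of the annulus ``absorb'' one merging) is genuinely different from the paper's proof, which never sweeps levels: the paper works directly at the critical level(s), counts via induction the (simply) connected components of the relevant super/sub-level sets that meet $\gamma_E$ (Lemmas \ref{le4.1}--\ref{le4.5}, reducing to the counting Lemmas of Sections 2--3), and then uses the strong maximum principle to assign to each such component a distinct local maximum or minimum of $\psi$ on $\gamma_E$, splitting into the cases $t\in(z,H]$ and $t\in(H,Z)$ and treating unequal critical values by the $E_j$/$G_{j_n}$ induction of Theorem \ref{th1.3}. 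Your route could in principle work, but as written it has concrete gaps. First, the claim that ``every component of $\Omega_t$ is simply connected as long as $t>H$'' is false, and the justification (``$u=H<t$ on $\gamma_I$'') is a non sequitur: a component of $\{u>t\}$ with $t>H$ can perfectly well encircle $\gamma_I$ without touching it (its hole then contains the collar of $\gamma_I$ where $u<t$); nothing in the maximum principle forbids this, and it is exactly this configuration that forces the paper to prove Lemma \ref{le4.3}, Case 1 of Lemma \ref{le4.5} and the Remark \ref{re3.6}-type analysis. Since your bookkeeping ``a critical point of multiplicity $m_i$ accounts for a drop by $m_i$'' needs the $m_i+1$ local super-level sectors to lie in distinct global components, this failure is not cosmetic: it is precisely where deficits (topology creation instead of merging) occur, and your assertion that the hole ``pays for'' at most one such deficit over the whole sweep is the actual heart of the theorem; you state it but do not prove it, and you defer to an ``alternating-sum component-tracking lemma'' that does not exist in the paper in that form.

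Second, the level $t=H$ is a genuinely degenerate level of your sweep and your two proposed fixes are unsound: $H$ is the prescribed Dirichlet datum on $\gamma_I$, so you cannot perturb it to make it a regular value of the fixed solution $u$; and the Hopf lemma does not give transversality of $\{u=H\}$ along $\gamma_I$, because under the hypothesis $\min_{\gamma_E}\psi<H<\max_{\gamma_E}\psi$ the function $u-H$ changes sign near $\gamma_I$, so $\partial_\nu u$ vanishes at some points of $\gamma_I$ and $\nabla u$ may vanish there; interior critical points with value exactly $H$ are also possible (the paper explicitly includes $t=H$ in Case 2 of Lemma \ref{le4.5}). Relatedly, your sweep omits topology changes occurring through $\gamma_E$ itself: components of $\{u>t\}$ can merge across a local minimum of $\psi$ on $\gamma_E$ with no interior critical point involved, so ``every merging is forced by passing an interior critical point'' is not correct (this particular omission only strengthens the inequality, but it shows the bookkeeping as stated is not the right identity). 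Finally, your finiteness argument needs more care: isolatedness of interior critical points plus smoothness up to the boundary does not by itself prevent accumulation at the finitely many boundary points where $\nabla u$ may vanish; the paper instead derives finiteness (Lemma \ref{le4.4}) by showing infinitely many critical points would force infinitely many local maxima of $\psi$ on $\gamma_E$. To repair your proof you would need, at minimum, a correct structural lemma on non-simply connected components of $\{u>t\}$ for $t$ above and below $H$ (the analogue of Lemmas \ref{le4.1} and \ref{le4.3}) and a rigorous one-deficit bound proved via the trapped-component/minimum-principle argument, i.e., essentially the machinery the paper builds.
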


\begin{Theorem}\label{th1.6} %(th1.6)
Let $\Omega$ be a bounded smooth annular domain with the interior boundary $\gamma_I$ and the external boundary $\gamma_E$ in $\mathbb{R}^{2}$. Suppose that $\psi(x)\in C^1(\overline{\Omega}),$ $\mathop {\min}\limits_{\gamma_E}\psi(x)<H<\mathop {\max}\limits_{\gamma_E}\psi(x)$ and that $\psi$ has only $N$ global maximal points and $N$ global minimal points on $\gamma_E,$ i.e., all the maximal and minimal points of $\psi$ are global. Let $u$ be a non-constant solution of (\ref{1.5}). Then $u$ has finite interior critical points, and either
\begin{equation}\label{1.10}\begin{array}{l}
\begin{array}{l}
\sum\limits_{i = 1}^k {{m_i}}= N,
\end{array}
\end{array}\end{equation}
or
\begin{equation}\label{1.11}\begin{array}{l}
\begin{array}{l}
\sum\limits_{i = 1}^k {{m_i}}+1= N,
\end{array}
\end{array}\end{equation}
where $m_i$ is as in Theorem \ref{th1.1}.
\end{Theorem}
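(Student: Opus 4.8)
The plan is to adapt the super-level set method underlying Theorems \ref{th1.3}--\ref{th1.5}: I would follow the connected components and the first Betti number of the super-level sets $\Omega_t:=\{x\in\overline{\Omega}:u(x)>t\}$ as the level $t$ decreases from $M:=\max_{\gamma_E}\psi=\max_{\overline{\Omega}}u$ down to $\mu:=\min_{\gamma_E}\psi=\min_{\overline{\Omega}}u$ (here $\mu<H<M$), keeping track of the Euler characteristic $\chi(\Omega_t)=b_0(\Omega_t)-b_1(\Omega_t)$; the genuinely new ingredient is the behaviour at the exceptional level $t=H$, where the inner boundary enters the picture. First I would record the structural facts. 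The operator $v\mapsto\sum_{i,j=1}^{2}a_{ij}(\nabla u)\,\partial_{ij}^{2}v$ is linear, uniformly elliptic, and annihilates $u$, so by the strong maximum principle $u$ has no interior local extremum; hence every interior critical point is a saddle, and by the local structure of critical points of solutions of planar elliptic equations (recalled in the introduction) the interior critical points $x_1,\dots,x_k$ are isolated (so $k<\infty$), with $\{u=u(x_i)\}$ consisting near $x_i$ of $2(m_i+1)$ arcs. Consequently, when $t$ decreases past a value equal to $u(x_i)$ for exactly one $x_i$, $\chi(\Omega_t)$ drops by $m_i$. Finally, the maximum principle rules out any bounded component of $\{u\le t\}$ lying strictly inside $\Omega$, so the only hole $\Omega_t$ can carry is the one enclosing the inner boundary $\gamma_I$; thus $b_1(\Omega_t)\in\{0,1\}$, with $b_1(\Omega_t)=1$ exactly when $\Omega_t$ encircles $\gamma_I$. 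Note also that $\overline{\Omega_t}\cap\gamma_I=\emptyset$ for $t>H$, while $\gamma_I\subset\Omega_t$ for $t<H$.

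Next I would fix the two ends of the range and the list of levels at which anything can happen. Since $\psi$ attains $M$ exactly at its $N$ maximal points and, all of its maxima being global, $\psi<M$ with no further peaks elsewhere, for $t$ slightly below $M$ the set $\Omega_t$ is a disjoint union of $N$ simply connected caps, one per maximal point; hence $\chi(\Omega_t)=N$ there. Dually, all minima of $\psi$ being global, for $t$ slightly above $\mu$ the set $\Omega_t$ equals $\overline{\Omega}$ with $N$ small caps near the minimal points removed — a connected set still encircling $\gamma_I$ — so $\chi(\Omega_t)=0$ there. Moreover, the only critical values of $\psi|_{\gamma_E}$ are $M$ and $\mu$, so for $t\in(\mu,M)$ no component of $\Omega_t$ is created or destroyed along $\gamma_E$; hence $\chi(\Omega_t)$ can change only when $t$ crosses an interior critical value (a drop by the corresponding $m_i$) or the level $H$. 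This is where the annular case departs from the simply connected one: in Theorem \ref{th1.2} there is no level $H$ and the bottom set is simply connected, forcing $\sum m_i=N-1$, whereas here the bottom set is an annulus and there is an additional contribution at $t=H$.

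The heart of the proof is to evaluate the jump $\Delta_H$ of $\chi(\Omega_t)$ as $t$ crosses $H$. Since $u\equiv H$ on $\gamma_I$, the tangential gradient of $u$ vanishes there, so $\nabla u$ is purely normal along $\gamma_I$. If $\nabla u$ never vanishes on $\gamma_I$, its normal component is continuous and nonvanishing on the connected curve $\gamma_I$, hence of constant sign, so just inside $\gamma_I$ one has either $u>H$ everywhere or $u<H$ everywhere; a local analysis near $\gamma_I$ then shows that as $t$ passes $H$ the super-level set acquires near $\gamma_I$ a collar of $\gamma_I$ which is either a brand new component, the loop around $\gamma_I$ appearing at the same instant, or else merely enlarges a component whose loop around $\gamma_I$ was already present — in either case $b_0$ and $b_1$ change by equal amounts and $\Delta_H=0$. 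If instead $\nabla u$ vanishes somewhere on $\gamma_I$ — equivalently $\{u=H\}$ sends arcs from $\gamma_I$ into $\Omega$, equivalently $\gamma_I$ abuts both $\{u>H\}$ and $\{u<H\}$ — then crossing $t=H$ attaches the collar of $\gamma_I$ so as to splice the finitely many ``tongues'' of $\Omega_{H+\varepsilon}$ that reach down toward $\gamma_I$ into a single loop, which raises $b_1$ by $1$ without increasing $b_0$, so $\Delta_H=-1$. One must still exclude $\Delta_H\le-2$: a ``pocket'' of $\{u<H\}$ trapped between two such tongues and $\gamma_I$ would be a component of $\{u<H\}$ avoiding $\gamma_E$, contradicting the maximum principle; this forces all the tongues into one component of $\Omega_{H+\varepsilon}$, so that $\Delta_H\in\{0,-1\}$.

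Assembling the count, as $t$ runs from just below $M$ to just above $\mu$ the number $\chi(\Omega_t)$ falls from $N$ to $0$, the total fall being $\sum_{i=1}^{k}m_i-\Delta_H$; hence $\sum_{i=1}^{k}m_i=N+\Delta_H$, which is $\sum m_i=N$ when $\Delta_H=0$ and $\sum m_i+1=N$ when $\Delta_H=-1$, giving (\ref{1.10})--(\ref{1.11}). I expect the main obstacle to be exactly the level-$H$ analysis of the previous paragraph — in particular the case where $\nabla u$ has zeros on $\gamma_I$, in which one has to understand precisely how the interior branches of $\{u=H\}$ issuing from $\gamma_I$ recombine the components of the super-level sets — together with the maximum-principle exclusion of $\Delta_H\le-2$; the rest of the argument is bookkeeping parallel to the simply connected case and to the proofs of Theorems \ref{th1.3}--\ref{th1.5}.
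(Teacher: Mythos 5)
The decisive step of your plan is the claim that the jump $\Delta_H$ of $\chi(\Omega_t)$ across the level $t=H$ lies in $\{0,-1\}$, and your argument for excluding $\Delta_H\le -2$ does not work. A component of $\{x\in\Omega: u(x)<H\}$ sitting between two ``tongues'' of $\{u>H\}$ that reach $\gamma_I$ is not trapped: it can escape to $\gamma_E$ through any boundary arc where $\psi<H$, and such arcs must exist because the global minima of $\psi$ lie strictly below $H$; so the maximum principle gives no contradiction. In fact $\Delta_H=-2$ genuinely occurs under the hypotheses of Theorem \ref{th1.6}. Take $\Omega=\{1<|z|<2\}$, $a_{ij}=\delta_{ij}$, and $u=H+\mathrm{Re}\,(z^{2}-z^{-2})=H+(r^{2}-r^{-2})\cos 2\theta$. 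Then $u$ is harmonic, $u\equiv H$ on $\gamma_I$, and on $\gamma_E$ one has $\psi=H+\tfrac{15}{4}\cos 2\theta$, which has exactly $N=2$ maximal and $2$ minimal points, all global, with $\min_{\gamma_E}\psi<H<\max_{\gamma_E}\psi$. Here $\{u>H\}$ has two components (around $\theta=0$ and $\theta=\pi$), each abutting $\gamma_I$; as $t$ crosses $H$ they are joined through a collar of $\gamma_I$, so $b_0$ drops by $1$ while $b_1$ rises by $1$, i.e. $\Delta_H=-2$. Moreover $\nabla u=0$ only where $2z+2z^{-3}=0$, i.e. at four points \emph{on} $\gamma_I$, so $u$ has no interior critical points at all. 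Your (correct) bookkeeping $\sum_i m_i=N+\Delta_H$ then reads $0=2-2$, and the dichotomy (\ref{1.10})--(\ref{1.11}) fails for this $u$: the step you yourself flag as the main obstacle is not just unproved, it is false, because the critical points forced by the topology may sit on $\gamma_I$, where $u$ is constant, instead of in $\Omega$.

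For comparison, the paper argues quite differently: it distinguishes cases according to the non-simply connected component of $\{u>t\}$, proves the three ``just right''s, and then counts components via Lemma \ref{le4.5}; but in the case relevant to the example above (the component meets $\gamma_E$) it relies on the step producing an interior critical point (step 1 of Theorem \ref{th1.2}, invoked through case 1 of Theorem \ref{th1.4}), which breaks down for exactly the same reason, since the branches of $\{u=H\}$ emanate from $\gamma_I$ rather than from an interior point. So your Euler-characteristic scheme cannot be completed as written; any repair needs an extra hypothesis (for instance $\nabla u\neq 0$ on $\gamma_I$, which forbids branches of $\{u=H\}$ issuing from $\gamma_I$) or a count that includes critical points on $\gamma_I$ with suitable weights, and under such a hypothesis your level-$H$ analysis would indeed give a clean alternative to the paper's induction. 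Two further gaps to note: isolatedness of interior critical points (Hartman--Wintner) does not by itself give $k<\infty$, since they could accumulate at $\partial\Omega$ --- finiteness requires the boundary-maximum counting of Lemma \ref{le4.4}; and your assertion that $\chi(\Omega_t)$ changes only at interior critical values and at $H$ silently excludes topology changes caused by critical points of $u$ on $\gamma_E$ at non-extremal levels, which the $C^1$ hypothesis on $\psi$ does not rule out.
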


\begin{Remark}\label{re1.7}
 In particular, when ${a_{ij}(\nabla u)=\frac{1}{\sqrt{1+|\nabla u|^2}}(\delta_{ij}-\frac{u_{x_i}u_{x_j}}{1+|\nabla u|^2})}$, then the minimal surface equation $\mbox {div}(\frac{\nabla u}{\sqrt{1+|\nabla u|^2}})=0$ in a bounded smooth domain is a particular example of a quasilinear elliptic equation in (\ref{1.1}).
\end{Remark}

 For the sake of clarity, we now explain the key ideas which are used to prove the main results. We prove (\ref{1.3}) and (\ref{1.6}) by induction and the strong maximum principle. On the other hand, we develop a new method to prove (\ref{1.4}) and (\ref{1.7}), which is different from the method in \cite{Sakaguchi2}. In \cite{Sakaguchi2}, the author divided the proof into two cases: all interior critical values are equal, i.e., $u(x_1)=\cdots=u(x_k)$, and all interior critical values are not totally equal. However, when $\psi(x)$ has only $N$ global maximal points and $N$ global minimal points on the boundary $\partial\Omega,$ we prove that all interior critical values are equal. We obtain (\ref{1.4}) and (\ref{1.7}) by showing that there are the following three ``{\bf just right}''s:

 (i) the first ``{\bf just right}'' is that the critical values for all interior critical points are equal (i.e., $u(x_1)=u(x_2)=\cdots=u(x_k)=t$ for some $t$);

 (ii) the second ``{\bf just right}'' is that all the critical points $x_1,x_2,\cdots,x_k$ together with the corresponding level lines of $\{x\in \Omega : u(x)=t\}$ clustering round these points form a connected set;

 (iii) the third ``{\bf just right}'' is that every simply connected component $\omega$ of $\{x\in \Omega: u(x)>t\}$ ($\{x\in \Omega: u(x)<t\}$) has exactly one global maximal (minimal) point on the boundary $\partial\Omega$.

The rest of this paper is organized as follows. In Section 2, we investigate the geometric structure of interior critical points of solutions in a bounded, smooth and simply connected domain in $\mathbb{R}^2$. We show that if $\psi(x)$ has only N local (global) maximal points on $\partial\Omega$, then $\Sigma_{i = 1}^k {{m_i}}+1 \le N$ ($\Sigma_{i = 1}^k {{m_i}}+1=N$) holds for the interior critical points of a solution $u$. We develop a new method to prove $\Sigma_{i = 1}^k {{m_i}}+1=N$, we show the three ``just right''s. In Section 3, we study the geometric structure of interior critical points of solutions in a bounded smooth annular domain with the interior boundary $\gamma_I$ and the external boundary $\gamma_E$ in $\mathbb{R}^2$, where $u|_{\gamma_I}=H,~u|_{\gamma_E}=\psi(x),~\psi(x)\geq H$ and $\psi$ has $N$ local (global) maximal points on $\gamma_E.$ We deduce $\Sigma_{i = 1}^k {{m_i}} \le N$ ($\Sigma_{i = 1}^k {{m_i}}=N$ or $\Sigma_{i = 1}^k {{m_i}}+1=N$). In Section 4, we investigate the case of $\mathop {\min}\limits_{\gamma_E}\psi(x)<H<\mathop {\max}\limits_{\gamma_E}\psi(x)$, where $\psi$ has $N$ local (global) maximal points on $\gamma_E$ and show the same results as in Section 3.

\section{The case of simply connected domains}
\subsection{Proof of Theorem \ref{th1.1}}
~~~~~~~In order to prove Theorem \ref{th1.1}, we need the following basic lemmas.
\begin{Lemma}\label{le2.1} %(le2.1)
Let $u$ be a non-constant solution of (\ref{1.2}). For any  $t\in (\mathop{\min }\limits_{\overline \Omega}u,\mathop{\max }\limits_{\overline \Omega}u),$ we have that any connected component of $\{x\in \Omega : u(x)>t\}$ and $\{x\in \Omega : u(x)<t\}$ is simply connected, which has to meet the boundary $\partial \Omega.$
\end{Lemma}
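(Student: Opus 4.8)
The plan is to reduce everything to the classical weak maximum principle for $L$ together with some planar topology. Since $L$ is uniformly elliptic in $\Omega$ and has no zeroth-order term, any solution $v$ of $Lv=0$ in a bounded subdomain $D\Subset\Omega$ satisfies $\max_{\overline D}v=\max_{\partial D}v$ and $\min_{\overline D}v=\min_{\partial D}v$; this is the only analytic input. Fix $t\in(\min_{\overline\Omega}u,\max_{\overline\Omega}u)$ and let $\omega$ be a connected component of $\{x\in\Omega:u(x)>t\}$; the case of $\{x\in\Omega:u(x)<t\}$ is symmetric, since $-u$ solves a quasilinear equation of the same type (with $a_{ij}(p)$ replaced by $a_{ij}(-p)$, still smooth and uniformly elliptic) and $\{u<t\}=\{-u>-t\}$.

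First I would show that $\omega$ must meet $\partial\Omega$. If it did not, then $\overline\omega$ would be a compact subset of $\Omega$, and from the continuity of $u$ and the fact that $\omega$ is a component of the open set $\{u>t\}$ one checks that $\partial\omega\subset\{u=t\}$: a boundary point with $u>t$ would already lie in the open set $\{u>t\}$ and hence (being a limit of points of $\omega$) in $\omega$, while a boundary point with $u<t$ would have a neighbourhood disjoint from $\omega$. Then applying the maximum principle to $u$ on $\omega$ gives $\max_{\overline\omega}u=\max_{\partial\omega}u=t$, contradicting $u>t$ on the nonempty open set $\omega$.

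Next I would show that $\omega$ is simply connected. If it were not, then — using that $\Omega\subset\mathbb R^2$ is simply connected, so $\mathbb R^2\setminus\Omega$ is connected and unbounded — one can extract a Jordan curve $\gamma\subset\omega$ whose bounded complementary component $U$ (Jordan curve theorem) satisfies $\overline U\subset\Omega$ but $U\not\subset\omega$; fix $p\in U\setminus\omega$. On $\gamma\subset\omega$ we have $u>t$, hence $t':=\min_\gamma u>t$ by compactness, and the maximum principle on $U$ gives $u\ge t'>t$ on $\overline U$. Thus $\overline U$ is a connected subset of $\{u>t\}$ meeting $\omega$ (along $\gamma$), so $\overline U\subset\omega$, and in particular $p\in\omega$, a contradiction. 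Combining the two steps — and the symmetric statements for $-u$ — gives the Lemma.

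The PDE step here is entirely routine, and the absence of a zeroth-order term in $L$ is precisely what forces both the maximum and the minimum of $u$ onto the boundary of any subdomain. The part that needs care is the planar topology: extracting the Jordan curve $\gamma$ from a non-simply-connected open subset of the plane (this requires surrounding a bounded complementary component of $\omega$ by a finite union of grid squares and taking one of the boundary loops), and verifying the two elementary separation facts, namely $\overline U\subset\Omega$ and $\partial\omega\subset\{u=t\}$. I expect that topological bookkeeping — in particular phrasing it so it also covers the case where $\omega$ has several ``holes'' — to be the only real obstacle, not any analytic difficulty.
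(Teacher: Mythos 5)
Your proof is correct and follows essentially the same route as the paper's: the Jordan curve theorem to produce a bounded region enclosed by a loop in a component, plus the maximum/minimum principle (legitimate here because the frozen-coefficient operator $\tilde a_{ij}(x)=a_{ij}(\nabla u(x))$ is uniformly elliptic with no zeroth-order term). The paper's version is terser --- it applies the strong maximum principle directly to the enclosed region and to $\Omega$ itself to force each component to reach $\partial\Omega$ --- but the substance is identical, and your extra topological bookkeeping only makes the argument more explicit.
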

\begin{proof}[Proof] Let $A$ be a connected component of $\{x\in \Omega : u(x)>t\}$ and $\alpha$ be a non-equivalent simple closed curve in $A.$ By the Jordan curve theorem there exists a bounded domain $B$ with $\partial B=\alpha.$ Since $\Omega$ is simply connected, then $B$ is contained in $\Omega.$ The strong maximum principle implies that $u>t$ in domain $B$. It shows that $B$ is contained in $A$, namely $A$ is simply connected. The strong maximum principle shows that $u$ obtain its maximum points and minimal points on boundary $\partial\Omega,$ therefore the connected component $A$ has to meet the boundary $\partial \Omega.$ The proof of the case of $\{x\in \Omega : u(x)<t\}$ is similar.
\end{proof}

\begin{Lemma}\label{le2.2}%(le2.2)
 Suppose that $x_0$ is an interior critical point of $u$ in $\Omega$ and that $m$ is the multiplicity of $x_0.$ Then $m+1$ distinct connected components of $\{x\in \Omega :u(x)>u(x_0)\}$ and $\{x\in \Omega :u(x)<u(x_0)\}$ cluster around the point $x_0$ respectively.
\begin{proof}[Proof] According to the results of Hartman and Wintner \cite{Hartman}, in a neighborhood of $x_0$ the level line $\{x\in \Omega : u(x)=u(x_0)\}$ consists of $m+1$ simple arcs intersecting at $x_0$. By the results of Lemma \ref{le2.1}, there exist $m+1$ distinct connected components of $\{x\in \Omega :u(x)>u(x_0)\}$ and $\{x\in \Omega :u(x)<u(x_0)\}$ clustering around the point $x_0$ respectively. This completes the proof.
\end{proof}
\end{Lemma}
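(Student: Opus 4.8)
The plan is to read off the local structure of $u$ near $x_0$ from a Hartman--Wintner type expansion and then promote it to a global statement about connected components with the help of Lemma~\ref{le2.1}. First I would invoke the results of Hartman and Wintner \cite{Hartman}: since $x_0$ is an interior critical point of $u$ of finite multiplicity $m$, in suitable local coordinates $z$ centred at $x_0$ one has $u(x)-u(x_0)=\mathrm{Re}\big(c\,z^{m+1}\big)+o(|z|^{m+1})$ with $c\neq 0$, so that inside a sufficiently small disc $D_\rho(x_0)$ the level line $\{x\in\Omega:u(x)=u(x_0)\}$ consists of exactly $m+1$ $C^1$ Jordan arcs through $x_0$, crossing pairwise transversally at $x_0$ at equal angles $\pi/(m+1)$. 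These arcs cut the punctured disc $D_\rho(x_0)\setminus\{x_0\}$ into $2(m+1)$ curvilinear sectors, on consecutive ones of which $u-u(x_0)$ has opposite sign; hence there are precisely $m+1$ ``positive'' sectors $S_1,\dots,S_{m+1}\subset\{u>u(x_0)\}$ and $m+1$ ``negative'' sectors $T_1,\dots,T_{m+1}\subset\{u<u(x_0)\}$, arranged alternately around $x_0$, each with $x_0$ on its boundary. Each positive sector is contained in a unique connected component of $\{u>u(x_0)\}$, and it is elementary that the positive sectors lying in a given component are exactly the components of the intersection of that component with $D_\rho(x_0)$; the same holds on the negative side.

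The only substantive point is then to show that the $m+1$ positive sectors lie in $m+1$ \emph{distinct} connected components of $\{u>u(x_0)\}$, and symmetrically for the negative ones; this is exactly where Lemma~\ref{le2.1} enters. I would argue by contradiction. Suppose two positive sectors $S_1,S_2$ are contained in one connected component $A$ of $\{u>u(x_0)\}$. By Lemma~\ref{le2.1}, $A$ is simply connected and meets $\partial\Omega$. Choose $p\in S_1$, $q\in S_2$ so close to $x_0$ that the radial segments $\overline{x_0p}$ and $\overline{x_0q}$ have their interiors in $S_1$ and $S_2$; joining $p$ to $q$ by a path inside the path-connected open set $A$ and appending these two segments produces a closed loop based at $x_0$ all of whose points other than $x_0$ lie in $A$, and after removing superfluous self-intersections one obtains a Jordan curve $C$ through $x_0$ with $C\setminus\{x_0\}\subset A$. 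Since $\Omega$ is simply connected and $C\subset\Omega$, the bounded complementary component $R$ of $C$ satisfies $\overline R\subset\Omega$. Near $x_0$ the curve $C$ is the union of the two radial germs, which bound two angular wedges; exactly one of them, say $W$, lies locally inside $R$, and $W$ necessarily contains at least one of the negative sectors, say $T$, since between the two distinct positive sectors there always sits a negative one. Picking $b\in T\cap R$ we have $u(b)<u(x_0)$; let $B$ be the connected component of $\{u<u(x_0)\}$ containing $b$. As $\partial R=C\subset A\cup\{x_0\}$ is disjoint from $\{u<u(x_0)\}$, the connected set $B$ cannot cross $\partial R$, whence $B\subset R$ and so $\overline B\subset\overline R\subset\Omega$, i.e.\ $B$ does not reach $\partial\Omega$ --- contradicting Lemma~\ref{le2.1}. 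Hence the $m+1$ positive sectors lie in pairwise distinct components, and likewise the $m+1$ negative sectors; combined with the first paragraph this gives exactly $m+1$ distinct connected components of $\{u>u(x_0)\}$ and of $\{u<u(x_0)\}$ clustering at $x_0$.

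I expect the main obstacle to be the purely topological bookkeeping in the contradiction step rather than the analytic input: one must make sure that $C$ can genuinely be taken to be a simple closed curve through $x_0$ carrying germs of both radial segments, that one of the two wedges at $x_0$ really lies inside the bounded region $R$, and that $R$ traps an \emph{entire} sub-level component $B$ inside $\Omega$. All of these rely on planarity together with the simple connectedness of $\Omega$ and of the components $A$ and $B$ supplied by Lemma~\ref{le2.1}; the Hartman--Wintner expansion itself is classical and needs nothing beyond uniform ellipticity and smoothness of the coefficients $a_{ij}$.
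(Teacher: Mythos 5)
Your proposal is correct and takes essentially the same route as the paper: the Hartman--Wintner expansion gives the $2(m+1)$ alternating sectors at $x_0$, and Lemma~\ref{le2.1} (simple connectedness of components plus the fact that each must reach $\partial\Omega$) forces the $m+1$ positive (resp.\ negative) sectors to lie in distinct components. The only difference is that you spell out the Jordan-curve/trapping argument that the paper compresses into the phrase ``by the results of Lemma~\ref{le2.1}'', which is exactly the intended (and correct) justification.
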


\begin{Lemma}\label{le2.3}%(le2.3)
Suppose that $u$ is a non-constant solution to (\ref{1.2}). Then $u$ has finite interior critical points in $\Omega$.
\begin{proof}[Proof]We set up the usual contradiction argument. Suppose that $u$ has infinite interior critical points in $\Omega,$ denoting by $x_1,x_2,\cdots$. The results of Lemma \ref{le2.1} and Lemma \ref{le2.2} show that there exists infinite connected components of $\{x\in \Omega :u(x)>u(x_i)\}$ and $\{x\in \Omega :u(x)<u(x_i)\} (i=1,2,\cdots)$. The strong maximum principle implies that there exists at least a maximum point and minimal point on $\partial\Omega$ for any connected components of $\{x\in \Omega :u(x)>u(x_i)\}$ and $\{x\in \Omega :u(x)<u(x_i)\} (i=1,2,\cdots)$ respectively. Therefore there exists infinite maximal points and minimal points on $\partial\Omega,$ this contradicts with the assumption. This completes the proof.
\end{proof}
\end{Lemma}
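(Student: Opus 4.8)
The plan is to argue by contradiction: an infinite interior critical set will be shown to force infinitely many local maxima (or minima) of $\psi$ on $\partial\Omega$, which is impossible in the setting under consideration (where $\psi$ has only finitely many local extrema on the boundary, as in Theorem~\ref{th1.1}). I would carry this out in three steps, using Lemmas~\ref{le2.1} and~\ref{le2.2} and the Hartman--Wintner expansion already invoked for Lemma~\ref{le2.2}.

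First I would record that the interior critical set is discrete. By interior elliptic regularity ($a_{ij}$ smooth) $u\in C^\infty(\Omega)$, and differentiating \eqref{1.1} shows that each $u_{x_\ell}$ solves a linear, uniformly elliptic equation with no zeroth-order term. Hence at every interior critical point $x_0$ one has $\nabla u(x)=P_m(x-x_0)+o(|x-x_0|^m)$ with $P_m\not\equiv 0$ homogeneous of degree $m=m(x_0)\ge 1$, so $x_0$ is an isolated zero of $\nabla u$. Consequently, if the interior critical set were infinite, its points $x_1,x_2,\dots$ could accumulate only on $\partial\Omega$ (by compactness of $\overline\Omega$).

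Second I would attach boundary extrema to critical points. Fix an interior critical point $x_i$ with value $t_i=u(x_i)$; by the strong maximum principle $\mathop{\min}\limits_{\partial\Omega}\psi<t_i<\mathop{\max}\limits_{\partial\Omega}\psi$. By Lemma~\ref{le2.2} at least two distinct components $A',A''$ of $\{x\in\Omega:u(x)>t_i\}$ cluster at $x_i$, and by Lemma~\ref{le2.1} each is simply connected and meets $\partial\Omega$. For such a component $A$ one has $u\equiv t_i$ on $\partial A\cap\Omega$ and $\psi=u\ge t_i$ on the nonempty closed set $\overline A\cap\partial\Omega$; since $\{u>t_i\}\ne\Omega$ this set is a proper subset of $\partial\Omega$, and, using that each boundary arc on which $\psi>t_i$ lies in the closure of a single super-level component, a point of $\overline A\cap\partial\Omega$ where $\psi$ is largest is a local maximum point of $\psi$ on $\partial\Omega$ --- call it a peak of $A$. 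Distinct components of $\{u>t_i\}$ have distinct peaks; in particular $\{u>t\}$ has at most $N$ components for every such $t$. Symmetrically, every component of a sub-level set carries a local minimum point of $\psi$ on $\partial\Omega$.

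Third I would derive the contradiction by counting. If there were infinitely many interior critical points then, by Step~1 and pigeonhole, either infinitely many of them have pairwise distinct critical values, or infinitely many share one value. In the first case, track $c(t):=\#\{\text{components of }\{x\in\Omega:u(x)>t\}\}$ as $t$ decreases from $\mathop{\max}\limits_{\partial\Omega}\psi$ to $\mathop{\min}\limits_{\partial\Omega}\psi$: by Step~2, $c(t)\le N$ always, $c(t)$ increases only when $t$ passes a local maximum value of $\psi$ (total increase at most $N$), while $c(t)$ strictly decreases each time $t$ crosses an interior critical value (two or more super-level components merge at the critical point, and no holes can appear by simple connectivity, Lemma~\ref{le2.1}); infinitely many critical values thus force infinitely many decreases against at most $N$ increases, which is absurd since $c\ge 1$ below the top value. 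In the second case one works on the single level set $\{u=t_0\}\cap\Omega$, which would then contain infinitely many isolated critical points accumulating at some $x_\infty\in\partial\Omega$, and extracts infinitely many distinct valleys (or peaks) from the sub-level (resp. super-level) components clustering at these points.

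The main obstacle is exactly this last bookkeeping near a boundary accumulation point: because several critical points may share super-level or sub-level components, one must check that the peaks/valleys produced are genuinely infinitely many \emph{distinct} boundary extrema, which is delicate when $\psi$ is only $C^1$ (flat pieces, infinitely many critical points of $\psi|_{\partial\Omega}$, etc.). Controlling this --- equivalently, ruling out accumulation of interior critical points at $\partial\Omega$ directly, in the spirit of Alessandrini--Magnanini --- is the step I would spend the most care on; everything else is a routine combination of Lemmas~\ref{le2.1}, \ref{le2.2} and the Hartman--Wintner expansion.
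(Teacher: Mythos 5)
Your Steps 1 and 2 are sound (isolated interior critical points; every component of a super-level set carries its own local maximum of $\psi$ on $\partial\Omega$, hence at most $N$ components of $\{x\in\Omega:u(x)>t\}$ for every $t$), and the first half of Step 3 --- bounding the number of \emph{distinct} critical values by tracking the component count $c(t)$, whose upward jumps are pinned to the $N$ boundary maxima and which must drop across each interior critical value --- can be made rigorous. The genuine gap is the second case of Step 3, the one you yourself flag as unresolved: infinitely many critical points sharing a single critical value $t_0$. There you propose to ``extract infinitely many distinct valleys (or peaks)'' from the sub- and super-level components clustering at these points, but this is impossible and in fact contradicts your own Step 2: $\{x\in\Omega:u(x)>t_0\}$ has at most $N$ components, and the sub-level set $\{x\in\Omega:u(x)<t_0\}$ likewise has only finitely many components (each carries a boundary local minimum by the symmetric argument). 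Distinct critical points on the same level can perfectly well share the \emph{same} finitely many super- and sub-level components, so counting components of a single level set, or boundary extrema attached to them, cannot by itself distinguish infinitely many critical points. What is missing is a quantitative link between the number (with multiplicity) of same-level critical points and the number of level-set components, i.e.\ the connectivity count of Lemma \ref{le2.4}: applied to any finite subset of $k'$ of these critical points it gives $M_1+M_2\ge 2k'+2$, while $M_1+M_2$ is bounded by the finite number of boundary extrema, so $k'$ is bounded. Nothing in your sketch supplies this step, and it is precisely the crux.

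For comparison, the paper's own proof is the short, direct version of this attribution argument: by Lemmas \ref{le2.1} and \ref{le2.2} the components clustering at the critical points meet $\partial\Omega$, and the strong maximum principle assigns boundary maxima and minima to them, so infinitely many critical points are claimed to force infinitely many boundary extrema. Your route through the monotone bookkeeping of $c(t)$ is a legitimate (and in the distinct-value case arguably cleaner) alternative, but both arguments stand or fall on exactly the distinctness/counting issue at a common critical level that you left open; to complete your proof you should replace the ``extract infinitely many peaks'' step by the Lemma \ref{le2.4}-type count (or an equivalent Euler-characteristic/index argument in the spirit of Alessandrini--Magnanini bounding the total multiplicity of critical points on one level by the number of boundary extrema).
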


\begin{Lemma}\label{le2.4}%(le2.4)
Let $x_1,x_2,\cdots,x_k$ be the interior critical points of $u$ in $\Omega$. Suppose that $u(x_1)=u(x_2)=\cdots=u(x_k)=t$ for some $t\in \mathbb{R},$ where $m_1,m_2,\cdots,m_k$ are the multiplicities of critical points $x_1,x_2,\cdots,x_k$ respectively. We set $M_1$ and $M_2$ as the number of the connected components of the super-level set $\{x\in\Omega : u(x)>t\}$ and the sub-level set $\{x\in\Omega : u(x)<t\}$ respectively. Suppose that all the critical points $x_1,x_2,\cdots,x_k$ together with the corresponding level lines of $\{x\in \Omega : u(x)=t\}$ clustering round these points form $q$ connected sets, where $q\geq 1$. Then
 \begin{equation}\label{2.1}\begin{array}{l}
M_1\geq \sum\limits_{i = 1}^k {{m_i}}+1,~~~M_2\geq \sum\limits_{i = 1}^k {{m_i}}+1,
\end{array}\end{equation}
and
\begin{equation}\label{2.2}\begin{array}{l}
M_1+M_2=2\sum\limits_{i = 1}^k {{m_i}}+q+1.
\end{array}\end{equation}

\begin{proof}[Proof] We divide the proof into two cases.

 (i) Case 1: When $q=1,$ by induction. Since the number of the connected components of the super-level set $\{x\in\Omega : u(x)>t\}$ equals the number of the connected components of the sub-level set $\{x\in\Omega : u(x)<t\}.$ Without loss of generality, we only estimate the number of the connected components of the super-level set $\{x\in\Omega : u(x)>t\}.$  By induction. When $k=1,$ the result holds by Lemma \ref{le2.2}. Assume that $1\leq k\leq n$ the connected set, which consists of $k$ critical points and the connected components clustering round these points, contains exactly $\sum_{i = 1}^k {{m_i}}+1$ components of the super-level set $\{x\in\Omega : u(x)>t\}$. Let $k=n+1.$ Let $A$ be the set which consists of the points $x_1,x_2,\cdots,x_{n+1}$ together with the respective components clustering round these points. We may assume that the points $x_1,x_2,\cdots,x_n$ together with the respective components clustering round these points form a connected set, denotes by $B.$ By Lemma \ref{le2.1} we know that $A$ cannot surround a component of $\{x\in\Omega : u(x)<t\}.$  Up to renumbering, therefore there is only one component of $\{x\in\Omega : u(x)>t\}$ whose boundary $\gamma$ contains both $x_n$ and $x_{n+1}$. Next we give the distribution for the level lines of $\{x\in\Omega : u(x)=t\}.$
\begin{center}
  \includegraphics[width=6cm,height=3.6cm]{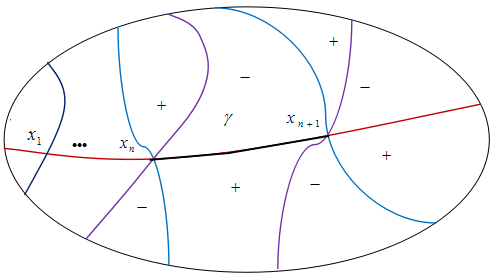}\\
  \scriptsize {\bf Figure 1.}~~The distribution for the level lines of $\{x\in\Omega : u(x)=t\}$.
\end{center}
 Since both $A$ and $B$ are connected. By using Lemma \ref{le2.2} and the inductive assumption to $B,$ then we know that $A$ contains exactly
 $$ (\sum\limits_{i = 1}^n {{m_i}}+1)+(m_{n+1}+1)-1=\sum\limits_{i = 1}^{n+1} {{m_i}}+1$$
connected components of the super-level set $\{x\in\Omega : u(x)>t\}$. This completes the proof of case 1.

(ii) Case 2: When $q\geq 2.$ Since the number of connected sets of the level lines $\{x\in \Omega : u(x)=t\}$ together with $x_1,x_2,\cdots,x_k$ increases one leading the number of connected components of $\{x\in\Omega : u(x)>t\}$ or $\{x\in\Omega : u(x)<t\}$ also increases one, i.e., if the number of connected components of $\{x\in\Omega : u(x)>t\}$ increased by 1, then the number of connected components of $\{x\in\Omega : u(x)<t\}$ unchanged, and vice versa. Figure 2 pictures the changing of connected components of $\{x\in\Omega : u(x)>t\}$ or $\{x\in\Omega : u(x)<t\}$.
\begin{center}
  \includegraphics[width=12cm,height=3.9cm]{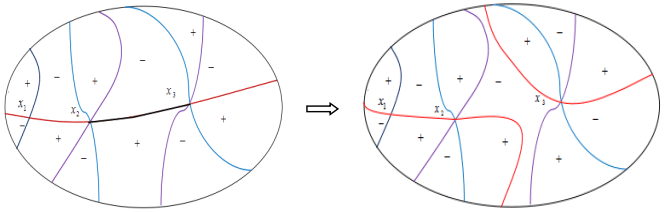}\\
  \scriptsize {\bf Figure 2.}~~The distribution for the connected components of $\{x\in\Omega : u(x)>t\}$ and $\{x\in\Omega : u(x)<t\}$.
\end{center}
Now we put
 \begin{equation*}\begin{array}{l}
M_1:=\sharp\Big\{\mbox{the\ connected\ components\ of\ the\ super-level\ set} ~ \{x\in\Omega : u(x)>t\} \Big\},\\
M_2:=\sharp\Big\{\mbox{the\ connected\ components\ of\ the\ sub-level\ set} ~ \{x\in\Omega : u(x)<t\} \Big\}.
\end{array}\end{equation*}
If all the critical points $x_1,x_2,\cdots,x_k$ together with the level lines of $\{x\in \Omega : u(x)=t\}$ clustering round these points form $q$ connected sets. By the results of case 1, then we have
\begin{equation*}\begin{array}{l}
M_1\geq \sum\limits_{i = 1}^k {{m_i}}+1,~~~M_2\geq \sum\limits_{i = 1}^k {{m_i}}+1,
\end{array}\end{equation*}
and
\begin{equation*}\begin{array}{l}
M_1+M_2=2(\sum\limits_{i = 1}^k {{m_i}}+1)+(q-1)=2\sum\limits_{i = 1}^k {{m_i}}+q+1.
\end{array}\end{equation*}
This completes the proof of case 2.
\end{proof}
\end{Lemma}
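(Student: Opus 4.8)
The plan is to prove the statement first for $q=1$ --- which will in fact give equality in (\ref{2.1}) and the case $q=1$ of (\ref{2.2}) --- and then reduce the general case to it. Throughout, the hypothesis forces every interior critical point of $u$ to lie on the single level set $\Gamma:=\{x\in\Omega:u(x)=t\}$, and by Lemma \ref{le2.1} every component of $\{u>t\}$ and of $\{u<t\}$ is simply connected and touches $\partial\Omega$. By the Hartman--Wintner description used in Lemma \ref{le2.2}, near each $x_i$ the set $\Gamma$ is $m_i+1$ arcs crossing transversally, and elsewhere $\Gamma$ is a smooth $1$-manifold; each arc of $\Gamma$, followed maximally, reaches $\partial\Omega$ or ends at some $x_j$, a loop being impossible because a closed arc of $\Gamma$ would bound a subdomain of $\Omega$ on which $u\equiv t$ by the strong maximum principle, contradicting that $u$ is non-constant. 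Let $S$ be the union of the $x_i$ with the arcs of $\Gamma$ issuing from them (the ``critical skeleton''), with connected components $S_1,\dots,S_q$; a first observation is that every component of $\{u>t\}$ and of $\{u<t\}$ has a point of $S$ in its closure, since otherwise it would have a boundary arc lying in $\Gamma$ but disjoint from $S$, and such an arc, being contained in the closure of a single complementary region, would have to lie in $\partial\Omega$ --- impossible.

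Assume $q=1$ and induct on $k$. For $k=1$ the claim is Lemma \ref{le2.2} together with the observation just made, giving $M_1=M_2=m_1+1$. For the step, note that the subgraph $T$ of $S$ formed by the $x_i$ and the arcs of $\Gamma$ joining two critical points is acyclic --- a cycle would again bound a subdomain with $u\equiv t$ on its boundary --- so, $S$ being connected, $T$ is a tree. Deleting a leaf $x_{n+1}$ of $T$ leaves the skeleton $B$ of $x_1,\dots,x_n$ connected, and $x_{n+1}$ is attached to $B$ by a single arc $e$. The super-level region bordering $e$ is adjacent both to $x_{n+1}$ and to $B$, so the $m_{n+1}+1$ super-level components clustering at $x_{n+1}$ and the $\sum_{i\le n}m_i+1$ super-level components adjacent to $B$ have at least one in common; they have \emph{exactly} one, because two common ones would produce a loop (through $x_{n+1}$, across one shared region, back through the other) bounding a subdomain of $\Omega$ on which $u>t$, hence lying inside a single super-level component --- a contradiction. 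The same holds for the sub-level components. Hence the total numbers of super- and of sub-level components are each $(\sum_{i\le n}m_i+1)+(m_{n+1}+1)-1=\sum_{i\le n+1}m_i+1$, completing the induction: $M_1=M_2=\sum_{i=1}^k m_i+1$, which is (\ref{2.1}) and, with $q=1$, (\ref{2.2}).

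For $q\ge2$ I would delete $S_1,\dots,S_q$ from $\overline\Omega$ one at a time. Removing a connected cluster $S_j$ merges $S_j$ with all the complementary components it borders into one connected piece, so the number of complementary components drops by $\#\{\text{components bordering }S_j\}-1$; by the $q=1$ count applied to $S_j$ this is $2\mu_j+2$, where $\mu_j$ is the sum of the multiplicities of the critical points in $S_j$, so the drop is $2\mu_j+1$. Since removing all clusters leaves $\overline\Omega$, a single piece, $M_1+M_2-1=\sum_{j=1}^q(2\mu_j+1)=2\sum_{i=1}^k m_i+q$, which is (\ref{2.2}). For (\ref{2.1}): every complementary component borders some $S_j$, and each $S_j$ borders exactly $\mu_j+1$ of each sign, so $M_1\le\sum_j(\mu_j+1)=\sum_i m_i+q$ and likewise $M_2\le\sum_i m_i+q$; with (\ref{2.2}) this forces $M_1=(M_1+M_2)-M_2\ge\sum_i m_i+1$ and $M_2\ge\sum_i m_i+1$. (One could instead get (\ref{2.2}) directly from Euler's formula for the planar graph $\Gamma\cup\partial\Omega$, using $\deg x_i=2(m_i+1)$ and that $T$ is a forest with $q$ components.)

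The main obstacle is the topological bookkeeping underlying ``exactly one shared component of each sign'' and ``a cluster borders exactly $2\mu_j+2$ distinct components'': both reduce to excluding that a single component of $\{u>t\}$ or $\{u<t\}$ touches the skeleton in two separate places, which one rules out by building a loop that bounds a subdomain on which $u$ would have to attain an interior extremum --- an argument resting on Lemma \ref{le2.1} and the strong maximum principle, but whose rigorous execution needs a careful case analysis of how the arcs of $\Gamma$ can link up inside a simply connected planar domain.
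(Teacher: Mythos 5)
Your $q=1$ argument is essentially the paper's: the same induction on $k$, removing one critical point and counting $(\sum_{i\le n}m_i+1)+(m_{n+1}+1)-1$; the paper justifies the key step (``only one component of $\{x\in\Omega:u(x)>t\}$ has both $x_n$ and $x_{n+1}$ on its boundary'') by appeal to Lemma \ref{le2.1} and a figure, whereas you make the skeleton's tree structure and the leaf-removal explicit and rule out two shared components by a loop-plus-strong-maximum-principle argument. That is a more careful rendering of the same proof. For $q\ge 2$ you genuinely depart from the paper: the paper argues that each additional connected set of level lines raises $M_1$ or $M_2$ by exactly one (again with a figure), while you count adjacencies --- each connected cluster $S_j$ borders exactly $2\mu_j+2$ regions, a deletion/Euler-type count gives (\ref{2.2}), and (\ref{2.1}) then follows from (\ref{2.2}) together with the upper bound $M_i\le\sum m_i+q$. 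This is cleaner and actually \emph{derives} (\ref{2.1}) in the $q\ge2$ case, which the paper only asserts ``by the results of case 1.''

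Two caveats. First, your ``first observation'' is wrongly justified and, as stated, false in general: a maximal arc of $\{x\in\Omega:u(x)=t\}$ containing no critical point is bordered by one component of $\{u>t\}$ and one of $\{u<t\}$ (not ``a single complementary region''), and nothing forces it into $\partial\Omega$. If such critical-point-free components of the level set exist, then with $q$ read as the number of \emph{skeleton} components both (\ref{2.2}) and your bound $M_i\le\sum m_i+q$ fail (one simple saddle plus one disjoint regular level arc gives $M_1+M_2=5$, not $4$); the identity only holds with $q$ equal to the number of connected components of the whole level set $\{u=t\}$, or under the implicit assumption that every such component contains a critical point. This is an ambiguity you share with the paper rather than a defect relative to it, but your argument leans on the observation explicitly, so it should be stated as a hypothesis or proved in the situations where the lemma is applied. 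Second, the deletion count implicitly needs the cluster--region adjacency structure to be acyclic (no two clusters joined through two distinct regions); this requires the same Jordan-curve/maximum-principle analysis you sketch for $q=1$, and you correctly flag it as the remaining bookkeeping --- the paper's Case 2 is no more rigorous on this point.
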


We are now ready to present the proof of Theorem \ref{th1.1}.
\begin{proof}[Proof of Theorem \ref{th1.1}]  (i) Case 1: If $u(x_1)=u(x_2)=\cdots=u(x_k)=t$ for some $t\in \mathbb{R}.$ By the results of Lemma \ref{le2.4}, we know that
\begin{equation*}\begin{array}{l}
 \sharp\Big\{\mbox{the\ connected\ components\ of\ the\ super-level\ set} ~ \{x\in\Omega : u(x)>t\} \Big\}\geq \sum\limits_{i = 1}^k {{m_i}}+1.
\end{array}\end{equation*}
Therefore, in this case the super-level set always has at least $\sum\limits_{i = 1}^k {{m_i}}+1$ connected components and at most $\sum\limits_{i = 1}^k {{m_i}}+q$ connected components. Using the strong maximum principle and Lemma \ref{le2.1}, we have that $u$ exists at least $\sum\limits_{i = 1}^k {{m_i}}+1$ local maximal points on $\partial\Omega.$ Hence, we have
\[ \sum\limits_{i = 1}^k {{m_i}}+1\leq N.\]

(ii) Case 2: The values at critical points $x_1,\cdots,x_k$ are not totally equal. Without loss of generality, we may suppose that
\begin{equation}\label{2.3}
\begin{split}
u(x_1)&=\cdots=u(x_{j_1})<u(x_{j_1+1})=\cdots=u(x_{j_2})<\cdots<\cdots\\
&<u(x_{j_{n-1}+1})=\cdots=u(x_{j_{n}}),
\end{split}
\end{equation}
where $x_1,\cdots,x_{j_1},\cdots, x_{j_2},\cdots, x_{j_{n}}$ are different critical points in $\Omega$, $j_{n}=k$ and $n\geq 2.$ Now we put
\begin{equation*}\begin{array}{l}
E_{j}:=\Big\{\omega : \mbox{open\ set}~ \omega ~ \mbox{is\ a\ connected\ component\ of} ~ \{x\in \Omega: u(x)>u(x_j)\} \Big\}~(j=j_1,j_2,\cdots,j_n),
\end{array}\end{equation*}
and
\begin{equation*}\begin{array}{l}
F_{j_n}:=\Big\{\omega : \mbox{open\ set}~ \omega~ \mbox{is\ a\ connected\ component\ of} ~ \{x\in \Omega: u(x)<u(x_{j_1})\}~\mbox{or} ~\omega~ {is\ a} \\ ~~~~~~~~~~~~ \mbox{connected\ component\ of} ~ \{x\in \Omega: u(x_{j_i})<u(x)<u(x_{j_{i+1}})~\mbox{for\ some}~1\leq i\leq n-1 \}\Big\}.
\end{array}\end{equation*}
According to the definition, we know that $F_{j_n}$ consists of disjoint components. Denotes
\[|F_{j_n}|:=\sharp\{\omega : \omega~ \mbox{is\ a\ connected\ component\ of} ~F_{j_n}\}.\]

To illustrate $F_{j_n}$, let us consider an illustration for $F_{j_2}$. Assume that $u$ has only three critical points $x_{j_1},x_{j_1+1},x_{j_2}$ with respective multiplicity $m_{j_1}=1,m_{j_1+1}=1,m_{j_2}=1$ in $\Omega$ and $u(x_{j_1})<u(x_{j_1+1})=u(x_{j_2})$. The distribution of elements of $F_{j_2}$ as follows:
\begin{center}
  \includegraphics[width=6.6cm,height=3.6cm]{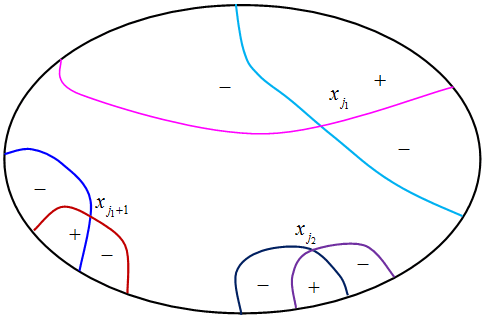}\\
  \scriptsize {\bf Figure 3.}~~The distribution of elements of $F_{j_2}$.
\end{center}
By the definition of $F_{j_n},$ we know $|F_{j_2}|=6.$

Now let us show that $\big|F_{j_s}\big|\geq \sum_{i = 1}^{{j_s}} m_i+1$ by induction on the number $s.$ When $s=1,$ the result holds by case 1. Assume that $\big|F_{j_s}\big|\geq \sum_{i = 1}^{{j_s}} m_i+1$ for $1\leq s\leq n-1.$  Let $s=n.$ Then, by (\ref{2.3}) and the definition of $E_{j}$, we have
\[\{x_{j_{n-1}+1},\cdots,x_{j_{n}}\}\subset \bigcup \limits_{\omega  \in {E_{{j_{n-1}}}}} \omega,\]
where $\omega$ is a connected component of $\{x\in \Omega: u(x)>u(x_{j_{n-1}})\}.$

Let us assume that $\{x_{j_{n-1}+1},\cdots,x_{j_{n}}\}$ are contained in exactly $\widetilde{q}$ components $\omega_1,\cdots,\omega_{\widetilde{q}}.$ Then $x_{j_{n-1}+1},\cdots,x_{j_{n}}$ together with the corresponding level lines of $\{x\in \Omega: u(x)=u(x_{j_{n}})\}$ clustering round these points at least form $\widetilde{q}$ connected sets. By Lemma \ref{le2.4}, we have
\begin{equation*}\begin{array}{l}
M:=\sharp\Big\{\mbox{the\ connected\ components\ of}~ \{x\in \Omega: u(x_{j_{n-1}})<u(x)<u(x_{j_{n}})\}~\mbox{in\ all}~\omega_j~(j=1,\cdots,\widetilde{q})\Big\}\\ ~~~~\geq \sum\limits_{i=j_{n-1}+ 1}^{j_{n}} {{m_i}}+\widetilde{q}.
\end{array}\end{equation*}
By using the definition of $\big|F_{j_{n}}\big|$ and the inductive assumption to $1\leq s\leq n-1,$ then we have
\begin{equation*}\begin{array}{l}
\big|F_{j_{n}}\big|=\big|F_{j_{n-1}}\big|+M\geq \big|F_{j_{n-1}}\big|+(\sum\limits_{i=j_{n-1}+ 1}^{j_{n}} {{m_i}}+\widetilde{q})-\widetilde{q}\geq \sum\limits_{i=1}^{j_{n}} {{m_i}}+1.
\end{array}\end{equation*}
By the strong maximum principle and Lemma \ref{le2.1}, we have that $u$ has at least $\sum\limits_{i = 1}^k {{m_i}}+1$ local minimal points on $\partial\Omega.$ Therefore, we obtain
\[ \sum\limits_{i = 1}^k {{m_i}}+1\leq N.\]
This completes the proof of case 2.
\end{proof}

\subsection{Proof of Theorem \ref{th1.2}}
~~~~~In this subsection, we investigate the geometric structure of interior critical points of a solution in a planar, bounded, smooth and simply connected domain $\Omega$ for the case of $\psi$ having only $N$ global maximal points and $N$ global minimal points on $\partial\Omega.$ We develop a new method to prove $\Sigma_{i = 1}^k {{m_i}}+1=N$, where $N\geq 2$, and we show the three ``just right''s.

\begin{proof}[Proof of Theorem \ref{th1.2}] We divide the proof into five steps.

Step 1, we show that $u$ has at least one interior critical point in $\Omega.$ Suppose by contradiction that $|\nabla u|>0$ in $\Omega$ and $z = \mathop {\min}
\limits_{\partial \Omega}u, Z = \mathop {\max}\limits_{\partial \Omega}u$. The strong maximum principle implies that $u$ has no interior maximum point and minimal point in $\Omega,$ then we have
 $$z<u(x)<Z ~\mbox{for\ any}~ x\in \Omega.$$
According to the assumption of Theorem \ref{th1.2}, let $q_1,\cdots,q_N$ and $p_1,\cdots,p_N$ be the global maximal points and minimal points on $\partial\Omega,$ respectively.

Without loss of generality, we may assume that there only exists two different global maximal points $q_1,q_2$ and global minimal points $p_1,p_2$ on boundary $\partial \Omega.$ Note that $u$ is monotonically decreasing on the connected components of boundary $\partial\Omega$ from one maximal point to the near minimal point. Therefore, by the continuity of level lines $\{x\in \Omega: u(x)=t_0,z<t_0<Z\},$ we know that $\{x\in \Omega: u(x)=Z-\epsilon\}$ ($\{x\in \Omega: u(x)=z+\epsilon\}$) exactly exists two level lines in $\Omega$ for any $\epsilon$ such that $0<\epsilon<Z-z$. This is impossible, because this would imply that either: $u(x)=z~(u(x)=Z)~\mbox{in\ interior\ points\ of}~\Omega$, or: there exists two level lines intersect in $\Omega$, i.e., there exists critical points in $\Omega,$ this contradicts with the assumption $|\nabla u|>0$ in $\Omega.$ This completes the proof of step 1. The figure as shown in Figure 4.
\begin{center}
  \includegraphics[width=7.0cm,height=3.5cm]{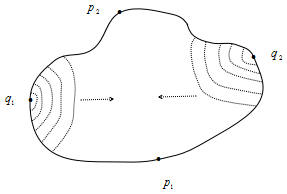}\\
  \scriptsize {\bf Figure 4.}~~The distribution of some level lines $\{x\in \Omega:u(x)=t_0, z<t_0<Z\}.$
\end{center}

Step 2, the first ``just right'': According to Lemma \ref{le2.3}, we assume that the interior critical points of $u$ are $x_1,x_2,\cdots,x_k$. We show that $u(x_1)=u(x_2)=\cdots=u(x_k)=t~\mbox{for\ some} ~ t\in \mathbb{R}$. We set up the usual contradiction argument. We assume that the values at critical points $x_1,\cdots,x_k$ are not totally equal. Without loss of generality, we suppose that $u(x_1)<u(x_2)$ and that $m_1,m_2$ are the respective multiplicities of $x_1,x_2.$ Then, by Lemma \ref{le2.1}, we know that any connected component $B$ of $\{x\in \Omega:u(x)<u(x_1)\}$ has to meet the boundary $\partial\Omega$ and $u(p_1)<u(x_1)$, where $p_1$ is the minimal point of connected component $B$ on $\partial\Omega$. At the same time, we know that any connected component $C$ of $\{x\in \Omega:u(x)<u(x_2)\}$ has to meet the boundary $\partial\Omega$ and $u(x_1)<u(p_2)<u(x_2)$, where $p_2$ is the minimal point of connected component $C$ on $\partial\Omega$ (see Figure 5). Then $u(p_1)\neq u(p_2)$, which contradicts with the assumption of Theorem \ref{th1.2}. This completes the proof of step 2.
\begin{center}
  \includegraphics[width=6cm,height=3.5cm]{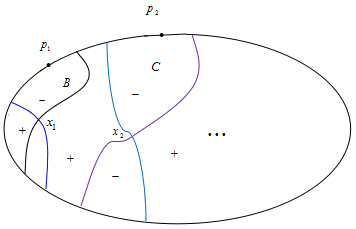}\\
  \scriptsize {\bf Figure 5.} ~~The distribution of the connected components.
\end{center}

Step 3, the second ``just right'': we show that $x_1,x_2,\cdots,x_k$ together with the corresponding level lines of $\{x\in \Omega : u(x)=t\}$ clustering round these points exactly form one connected set. Without loss of generality, we suppose by contradiction that $x_1,x_2,\cdots,x_k$ together with the level lines of $\{x\in \Omega : u(x)=t\}$ clustering round these points form two connected sets.
Therefore, there exists a connected components of $\{x\in \Omega: u(x)<t\}$ (or $\{x\in \Omega: u(x)>t\}$), which meets two parts $\gamma_1,\gamma_2$ of $\partial\Omega$, denoting by $A$ (see Figure 6).
\begin{center}
  \includegraphics[width=6cm,height=3.7cm]{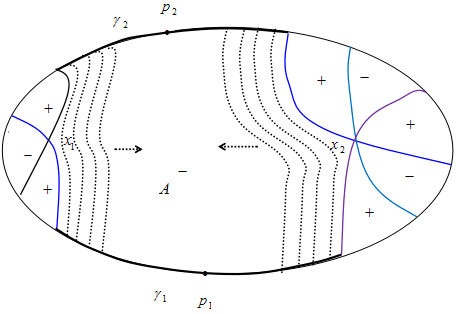}\\
  \scriptsize {\bf Figure 6.}~~The distribution of some level lines $\{x\in \Omega:u(x)=\widetilde{t},z<\widetilde{t}<t \}.$
\end{center}
Note that $u$ is monotonically decreasing on the connected components of boundary $\partial\Omega$ from one maximal point to the near minimal point. Therefore, $\{x\in A: u(x)=t-\epsilon\}$ exactly exists two level lines in $A$ for any $\epsilon$ such that $0<\epsilon<t-z$. This is impossible, because this would imply that either: $u(x)=z ~\mbox{in\ interior\ points\ of }~A$, or: there exists two level lines intersect in $A$, i.e., there exists critical points in $A.$ This completes the proof of step 3.

Step 4, the third ``just right'': we show that every connected component of $\{x\in \Omega: u(x)>t\}$ ($\{x\in \Omega: u(x)<t\}$) has exactly one global maximal (minimal) point on boundary $\partial\Omega.$ In fact, we assume that some connected component $B$ of $\{x\in \Omega: u(x)>t\}$ ($\{x\in \Omega: u(x)<t\}$) exists two global maximal (minimal) points on boundary $\partial\Omega.$ According to $\psi$ has only $N$ global maximal points and $N$ global minimal points on $\partial\Omega$, then there must exist a minimal point $p$ between the two maximal points on $\partial\Omega$ such that $u(p)=z$. Since $u(x)>t>z$ in $B$, by the continuity of solution $u$, this contradicts with the definition of connected component $B$. This completes the proof of step 4.

Step 5, By the results of step 3 and the results of case 1 in Lemma \ref{le2.4}, we have
 \begin{equation}\label{2.4}\begin{array}{l}
 \sharp\Big\{\mbox{the\ connected\ components\ of\ the\ super-level\ set} ~ \{x\in\Omega : u(x)>t\} \Big\}= \sum\limits_{i = 1}^k {{m_i}}+1,
\end{array}\end{equation}
and
 \begin{equation}\label{2.5}\begin{array}{l}
 \sharp\Big\{\mbox{the\ connected\ components\ of\ the\ sub-level\ set} ~ \{x\in\Omega : u(x)<t\} \Big\}= \sum\limits_{i = 1}^k {{m_i}}+1.
\end{array}\end{equation}
On the other hand, using the results of step 4 and the strong maximum principle, therefore we obtain
\begin{equation}\label{2.6}\begin{array}{l}
 \sum\limits_{i = 1}^k {{m_i}}+1=N.
\end{array}\end{equation}
This completes the proof of Theorem \ref{th1.2}.
\end{proof}

Let $N=1$, we have:
\begin{Corollary}\label{co2.5}
 Suppose that $u$ is a non-constant solution of (\ref{1.2}) and that $\psi$ has exactly one maximal point on $\partial \Omega$, then $u$ has no interior critical points in $\Omega.$
\end{Corollary}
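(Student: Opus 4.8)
The plan is to obtain this as the degenerate endpoint $N=1$ of Theorem~\ref{th1.1}. Since $u$ is a non-constant solution of (\ref{1.2}) and $\psi\in C^1(\overline\Omega)$ has exactly one maximal point on $\partial\Omega$, the hypotheses of Theorem~\ref{th1.1} hold with $N=1$. By Lemma~\ref{le2.3}, $u$ has only finitely many interior critical points $x_1,\dots,x_k$, and Theorem~\ref{th1.1} gives $\sum_{i=1}^k m_i+1\le N=1$, hence $\sum_{i=1}^k m_i\le 0$.

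Next I would note that every multiplicity $m_i$ is a positive integer: at an interior critical point $x_0$ of multiplicity $m$, the local nodal structure quoted before Lemma~\ref{le2.2} (Hartman--Wintner) shows that $\{u=u(x_0)\}$ consists of $m+1\ge 2$ simple arcs through $x_0$, so $m\ge 1$. Consequently, if there were any interior critical point (i.e. $k\ge 1$), then $\sum_{i=1}^k m_i\ge k\ge 1>0$, contradicting $\sum_{i=1}^k m_i\le 0$. Therefore $k=0$, that is, $u$ has no interior critical point in $\Omega$.

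If one prefers a self-contained argument avoiding Theorem~\ref{th1.1}, the same conclusion follows directly from Lemmas~\ref{le2.1} and~\ref{le2.2}: an interior critical point $x_0$ of multiplicity $m\ge 1$ would be surrounded by at least $m+1\ge 2$ distinct connected components of $\{x\in\Omega:u(x)>u(x_0)\}$; by Lemma~\ref{le2.1} each meets $\partial\Omega$, and by the strong maximum principle each carries a local maximal point of $u|_{\partial\Omega}=\psi$. Since these components are pairwise distinct, $\psi$ would have at least two maximal points on $\partial\Omega$, contradicting the hypothesis. Either way, there is no real obstacle here: the only point needing a (routine) remark is that a critical-point multiplicity is always at least $1$, which is immediate from the cited local structure of level lines.
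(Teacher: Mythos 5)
Your proposal is correct and follows essentially the same route as the paper, which simply states the corollary as the case $N=1$ of the preceding theorems (``Let $N=1$, we have:''), i.e.\ $\sum_{i=1}^k m_i+1\le N=1$ forces $k=0$ since each $m_i\ge 1$. Your supplementary direct argument via Lemmas~\ref{le2.1} and~\ref{le2.2} is a harmless restatement of the same mechanism and raises no issues.
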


\section{The case of multiply connected domains: $\psi(x)\geq H$}
\subsection{Proof of Theorem \ref{th1.3}}
~~~~~In order to prove Theorem \ref{th1.3}, we need the following basic lemmas.
\begin{Lemma}\label{le3.1} %(le3.1)
Let $u$ be a non-constant solution of (\ref{1.5}). For any $t\in (H,\mathop{\max }\limits_{\gamma_E}\psi(x)),$ then any connected component of $\{x\in \Omega : u(x)>t\}$ has to meet the external boundary $\gamma_E.$
\end{Lemma}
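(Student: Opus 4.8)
The plan is to argue by contradiction using the weak maximum principle for the (frozen-coefficient) uniformly elliptic operator $L$, in close analogy with the proof of Lemma~\ref{le2.1}, but adapted to the annular geometry.

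First I would fix $t\in(H,\mathop{\max}\limits_{\gamma_E}\psi)$ and let $A$ be a connected component of the relatively open set $\{x\in\Omega:u(x)>t\}$; note $A$ is open in $\mathbb{R}^2$. Suppose, for contradiction, that $\overline{A}\cap\gamma_E=\emptyset$. Since $u=H<t$ on $\gamma_I$ and $u$ is continuous up to $\partial\Omega$, there is a relative neighborhood of $\gamma_I$ in $\overline{\Omega}$ on which $u<t$; hence this neighborhood is disjoint from $\{u>t\}\supset A$, so $\overline{A}\cap\gamma_I=\emptyset$ as well. Combining the two, $\overline{A}\subset\Omega$, i.e. $A$ is compactly contained in $\Omega$.

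Next I would identify the boundary behaviour of $u$ on $\partial A$. Because $A$ is a connected component of $\{u>t\}$ and $\overline{A}\subset\Omega$, every point $x_0\in\partial A$ lies in $\Omega$; it cannot satisfy $u(x_0)>t$ (else $x_0$ would be interior to $\{u>t\}$ and, by connectedness, belong to $A$), and it cannot satisfy $u(x_0)<t$ (by continuity, since points of $A$ accumulate at $x_0$). Therefore $u\equiv t$ on $\partial A$, while $u>t$ throughout $A$. Now $u$ solves $Lu=0$ in the bounded domain $A$, where $L=\sum_{i,j}a_{ij}(\nabla u(x))\partial_{ij}$ has bounded, uniformly elliptic coefficients and no zeroth‑order term (here $\nabla u$ is fixed, since $u$ is a given smooth solution). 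The weak maximum principle then yields $\max_{\overline{A}}u=\max_{\partial A}u=t$, contradicting $u>t$ in $A$. Hence $\overline{A}\cap\gamma_E\neq\emptyset$, which is the assertion.

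I expect the only delicate point to be the claim that $\partial A$ is contained entirely in the interior level set $\{u=t\}$: one must rule out that $\partial A$ touches $\partial\Omega$. The part on $\gamma_I$ is handled by $u=H<t$, and the part on $\gamma_E$ is exactly what the contradiction hypothesis removes; so this step is clean once both components of $\partial\Omega$ are treated separately. I would also remark that, unlike Lemma~\ref{le2.1}, no simple-connectedness conclusion is claimed here — a component $A$ is allowed to encircle $\gamma_I$ — and the maximum-principle argument above applies verbatim in that case as well, since it only uses $\overline{A}\subset\Omega$ and $u\equiv t$ on $\partial A$.
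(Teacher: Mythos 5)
Your argument is correct and follows essentially the same route as the paper: the boundary condition $u|_{\gamma_I}=H<t$ rules out $\overline{A}$ touching $\gamma_I$, and the maximum principle (applied to $L$ with coefficients frozen along the given solution) forbids a component compactly contained in $\Omega$ with $u\equiv t$ on its boundary. Your version merely makes explicit the steps the paper compresses into one line, including the useful observation that the argument is insensitive to whether $A$ encircles $\gamma_I$.
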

\begin{proof}[Proof] Let $A$ be a connected component of $\{x\in \Omega : u(x)>t\}$. According to the assumption of $u|_{\gamma_I}=H$, we know that $A$ can not contain $\gamma_I$. Then the strong maximum principle and (\ref{1.5}) show that the connected component $A$ has to meet the external boundary $\gamma_E.$
\end{proof}

\begin{Lemma}\label{le3.2}%(le4.2)
 Suppose that $x_0$ is an interior critical point of $u$ in $\Omega$ and that $m$ is the multiplicity of $x_0.$ Then $m+1$ distinct connected components of $\{x\in \Omega :u(x)>u(x_0)\}$ cluster around the point $x_0$.
\begin{proof}[Proof] According to the results of Hartman and Wintner \cite{Hartman}, in a neighborhood of $x_0$ the level line $\{x\in \Omega : u(x)=u(x_0)\}$ consists of $m+1$ simple arcs intersecting at $x_0$. By Lemma \ref{le3.1}, there exist $m+1$ distinct connected components of $\{x\in \Omega :u(x)>u(x_0)\}$ clustering around the point $x_0$. This completes the proof.
\end{proof}
\end{Lemma}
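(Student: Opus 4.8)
The plan is to deduce the global conclusion from the local behaviour of $u$ at the degenerate critical point $x_0$, combining the Hartman--Wintner asymptotics with the strong maximum principle, while Lemma \ref{le3.1} supplies the control on the topology of the annular domain. The main obstacle, which has no counterpart in the simply connected case of Lemma \ref{le2.2}, will be topological: making sure that a certain Jordan curve bounds a region lying inside $\Omega$ rather than one encircling the inner boundary $\gamma_I$.

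First I would recall the local picture. Since $u$ solves the uniformly elliptic equation in \eqref{1.5} with smooth coefficients and $x_0$ is a critical point of multiplicity $m$, the results of Hartman and Wintner provide a disk $B=B_\delta(x_0)\subset\Omega$ on which $u-u(x_0)$ coincides, up to higher order terms, with the real part of $c(z-z_0)^{m+1}$ for some $c\neq0$; hence, after shrinking $\delta$, the level set $\{x\in B:u(x)=u(x_0)\}$ is a union of $m+1$ simple arcs through $x_0$ that cut $B$ into $2(m+1)$ curvilinear sectors, on which $u-u(x_0)$ is alternately positive and negative. In particular $\{x\in B:u(x)>u(x_0)\}$ has exactly $m+1$ connected components $S_1,\dots,S_{m+1}$, each containing $x_0$ in its boundary, and there are likewise $m+1$ ``negative'' sectors adjacent to $x_0$.

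Second, which is the heart of the lemma, I would show that $S_1,\dots,S_{m+1}$ lie in $m+1$ pairwise distinct connected components of the super-level set $\{x\in\Omega:u(x)>u(x_0)\}$. Arguing by contradiction, suppose $S_1$ and $S_2$ lie in one component $A$. Choosing a simple arc $\sigma\subset A$ from a point of $S_1$ to a point of $S_2$ and closing it up at $x_0$ by two short arcs lying in $S_1$ and in $S_2$ respectively, I obtain a Jordan curve $\gamma\subset\Omega$ along which $u\geq u(x_0)$, with equality only at the single point $x_0$; moreover, since the sectors alternate in sign, each of the two wedges into which the two short arcs divide $B$ near $x_0$ contains a full negative sector, where $u<u(x_0)$.

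Third, I would extract a contradiction from the strong minimum principle. The delicate point is that, because Lemma \ref{le3.1} forces $A$ to meet $\gamma_E$, one can select the connecting arc $\sigma$ inside $A$ so that the bounded Jordan domain $D$ cut out by $\gamma$ does not encircle $\gamma_I$; planarity of $\Omega$ then gives $D\subset\Omega$. Granting this, $u$ solves $Lu=0$ in $D$ with $u\geq u(x_0)$ on $\partial D$ and with the minimum value $u(x_0)$ attained at the boundary point $x_0$, so the strong minimum principle yields $u\equiv u(x_0)$ on $\overline D$, contradicting both the non-constancy of $u$ and the presence of a negative sector inside $D$. Hence the $m+1$ positive sectors lie in $m+1$ distinct components of $\{x\in\Omega:u(x)>u(x_0)\}$, and these components cluster around $x_0$ by construction. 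As noted, I expect the verification that $\gamma$ can be arranged to bound a subdomain of $\Omega$ — in the annulus a component of a super-level set may itself wind around $\gamma_I$ — to be the only nontrivial part; once $D\subset\Omega$ is secured the maximum principle finishes the argument in one line.
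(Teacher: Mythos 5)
Your local analysis (the Hartman--Wintner sectors with alternating signs) and the Jordan-curve-plus-minimum-principle scheme are in the spirit of the paper's argument, but the step you yourself single out as the crux is a genuine gap, not a deferred technicality. You assert that, because Lemma \ref{le3.1} forces the component $A$ to meet $\gamma_E$, the connecting arc $\sigma\subset A$ ``can be selected so that the bounded Jordan domain $D$ does not encircle $\gamma_I$,'' and you then argue ``granting this.'' Meeting $\gamma_E$ gives no such control: a component of $\{x\in\Omega:u(x)>u(x_0)\}$ may meet $\gamma_E$ and still wind all the way around $\gamma_I$, with its two ends abutting $x_0$ through the two positive sectors. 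Concretely, with $\psi\ge H$ one can have, at the critical value $t=u(x_0)$, a non-simply connected component of $\{u<t\}$ hugging $\gamma_I$ whose external boundary is a closed curve through the single saddle $x_0$, together with one further sub-level component attached to an arc of $\gamma_E$ and touching the first only at $x_0$; then $\{u>t\}$ is a single horseshoe-shaped region whose two ends are precisely the two positive sectors at $x_0$, \emph{every} arc in $A$ joining them closes up at $x_0$ to a curve encircling $\gamma_I$, $D\not\subset\Omega$ (it contains the hole, on whose boundary $u=H<t$), and the minimum principle cannot be invoked. This wrap-around situation is not exotic: it is exactly the configuration the paper must handle separately (Lemma \ref{le3.5}, Case 1, and Remark \ref{re3.6}, where the level line through critical points is a closed curve between $\gamma_I$ and $\gamma_E$), so it cannot be argued away by a cleverer choice of $\sigma$.

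For what it is worth, the paper's own proof of Lemma \ref{le3.2} is even terser at this very point -- it simply cites Lemma \ref{le3.1} for the distinctness -- so your proposal is an honest attempt to supply the missing globalization, and your contradiction argument is correct whenever $D\subset\Omega$ can be secured (e.g.\ when no super-level or sub-level structure winds around $\gamma_I$). But as written the pivotal topological claim is only hoped for, and in the annular setting it can actually fail; a complete treatment has to either exclude the wrap-around configuration by hypothesis or split off that case and count components there by a separate argument, as the paper does in Lemma \ref{le3.5}.
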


\begin{Lemma}\label{le3.3} %(le3.3)
If there exists $t\in (H, \mathop{\max }_{\gamma_E}\psi(x))$ such that a connected component $\omega$ of $\{x\in \Omega : u(x)<t\}$ is non-simply connected and the external boundary $\gamma$ of $\omega$ is a simply closed curve in $\Omega$, i.e., the external boundary  $\gamma$ of $\omega$ is a simply closed curve between $\gamma_I$ and $\gamma_E$. Then there does not exist any interior critical point in $\omega$.
\end{Lemma}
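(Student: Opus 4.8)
The plan is to first recognize that the hypotheses pin down $\omega$ completely, and then to rule out interior critical points by a topological argument on that region. First I would identify $\omega$: since $\gamma$ is a simple closed curve in $\Omega$ separating $\gamma_I$ from $\gamma_E$, and $\omega$ lies on the $\gamma_I$-side of its external boundary $\gamma$, we have $\omega\subseteq\Omega'$, where $\Omega'$ is the open annular region bounded by $\gamma$ on the outside and $\gamma_I$ on the inside. As $\gamma\subseteq\partial\omega\cap\Omega$ is part of the level line $\{u=t\}$, one has $u\equiv t$ on $\gamma$. If $u(y)>t$ at some $y\in\Omega'$, the connected component $V$ of $\{x\in\Omega:u(x)>t\}$ through $y$ cannot meet $\gamma$, so it is trapped in the component $\Omega'$ of $\Omega\setminus\gamma$ and never reaches $\gamma_E$, contradicting Lemma \ref{le3.1}; hence $u\le t$ on $\Omega'$, and since $u=H<t$ on $\gamma_I$ the strong maximum principle forces $u<t$ throughout $\Omega'$. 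As $\Omega'$ is connected it lies in a single component of $\{u<t\}$, necessarily $\omega$, so $\omega=\Omega'$ is a topological annulus with $\partial\omega=\gamma\cup\gamma_I$, $u\equiv t$ on $\gamma$, $u\equiv H$ on $\gamma_I$, $H<t$.

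Next I would argue by contradiction: suppose $x_0\in\omega$ is an interior critical point and put $c:=u(x_0)$, so $H<c<t$ by the strong maximum/minimum principle. I would then show that $U^+:=\{x\in\omega:u(x)>c\}$ and $U^-:=\{x\in\omega:u(x)<c\}$ are each connected. Indeed, if a component $A$ of $U^-$ had closure disjoint from $\gamma_I$, then $\overline A$ would be a compact subset of $\Omega$ with $u=c$ on $\partial A$ (it cannot touch $\gamma$, where $u=t>c$) and $u<c$ inside, so $u$ would attain an interior minimum — forbidden by the strong maximum principle applied to $-u$. Hence every component of $U^-$ meets $\gamma_I$ in its closure; since $u=H<c$ on the connected curve $\gamma_I$, a one-sided neighborhood of $\gamma_I$ in $\omega$ lies in $U^-$ and is connected, so $U^-$ has exactly one component. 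By the symmetric argument, with $\gamma$ in place of $\gamma_I$ and the maximum principle in place of the minimum principle, $U^+$ is connected as well.

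Finally I would derive a contradiction from the local structure at $x_0$. By the theorem of Hartman and Wintner \cite{Hartman} (used already in Lemma \ref{le3.2}), near $x_0$ the level line $\{u=c\}$ is the union of $m_0+1\ge 2$ simple arcs through $x_0$, cutting a small punctured disk about $x_0$ into $2(m_0+1)\ge 4$ open sectors on which $u-c$ alternates sign. Pick two sectors $\Sigma_1,\Sigma_2$ on which $u>c$ that are separated, on one side, by exactly one sector; by the sign alternation a sector on which $u<c$ then lies between $\Sigma_1$ and $\Sigma_2$ on each of the two sides. Joining interior points of $\Sigma_1,\Sigma_2$ by an arc inside the connected set $U^+$ and closing it up through $x_0$ inside $\Sigma_1\cup\Sigma_2$ produces a simple closed curve $\sigma\subset\omega$ with $\sigma\setminus\{x_0\}\subseteq U^+$ and, on each of its two local sides at $x_0$, a sector on which $u<c$. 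As a Jordan curve in the annulus $\omega$, $\sigma$ either bounds a disk in $\omega$ or encircles $\gamma_I$; in either case the connected set $U^-$ has points strictly inside and strictly outside $\sigma$ (in the first case, a local sector on one side together with the neighborhood of $\gamma_I$, which lies outside $\sigma$; in the second case, the two local sectors, which lie on opposite sides of $\sigma$). Hence $U^-$ meets $\sigma$, contradicting $\sigma\setminus\{x_0\}\subseteq U^+$ and $x_0\notin U^-$. Therefore $\omega$ contains no interior critical point.

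The step I expect to demand the most care is the construction of $\sigma$ — ensuring that the closed curve assembled from an arc in $U^+$ and two short arcs into $\Sigma_1,\Sigma_2$ can be taken simple and passing through $x_0$ — together with the split according to whether $\sigma$ is contractible in $\omega$. An alternative that sidesteps these points is an index count: once $\omega$ is identified as the annulus $\Omega'$ carrying distinct constant boundary values (and $\gamma$, being a critical-point-free level curve, is a smooth Jordan curve), the Hopf boundary point lemma makes $\nabla u$ non-vanishing and transversal to $\partial\omega$, so the winding number of $\nabla u$ along $\partial\omega$ equals that of the outward normal (winding being unchanged under negation), namely $\chi(\omega)=0$ by the Poincar\'{e}--Hopf theorem; since a critical point of multiplicity $m$ has index $-m\le -1$ by Hartman and Wintner, a vanishing sum of such indices is impossible, so $\omega$ has no critical point.
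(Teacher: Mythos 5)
Your main argument is correct in substance and, in fact, more complete than the paper's own proof, which is a brief contradiction-by-picture: the paper assumes ``without loss of generality'' that the critical point has multiplicity one, invokes Lemma \ref{le3.2} to get the local clustering of components of $\{u>u(x_0)\}$, and then reads off from Figure 7 that some component of $\{x\in\omega: u(x)<u(x_0)\}$ (or of the super-level set) is trapped between $\gamma$ and $\gamma_I$, contradicting the strong maximum principle and Lemma \ref{le3.1}. You reach the same contradiction by a different and more careful route: you first identify $\omega$ as the full annular region between $\gamma$ and $\gamma_I$ (a step the paper uses tacitly), then prove connectedness of $U^{-}$ and $U^{+}$ inside $\omega$, and finally use the Hartman--Wintner sector structure plus a Jordan-curve separation argument at $x_0$; this handles arbitrary multiplicity without the paper's ``WLOG'' and makes the trapping argument rigorous.

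Two caveats. First, your ``symmetric argument'' for the connectedness of $U^{+}$ is not actually symmetric: $\gamma_I$ is a smooth boundary component of $\Omega$, so its one-sided collar is a genuine tubular neighborhood, whereas $\gamma$ is only a level curve of $u$ and may well contain critical points of $u$ -- indeed Remark \ref{re3.6} and Case 1 of Lemma \ref{le3.5} show that this is precisely the situation in which the paper applies the present lemma -- so ``a connected one-sided neighborhood of $\gamma$ in $\omega$ contained in $U^{+}$'' needs its own justification. The cleanest repair is to avoid collars of $\gamma$ altogether: each component of $U^{+}$ has closure meeting $\gamma$ (your interior-maximum argument), $\gamma$ is connected and $u\equiv t>c$ on $\gamma\subset\Omega$, so every component of $U^{+}$ lies in the single connected component $W$ of $\{x\in\Omega:u(x)>c\}$ containing $\gamma$; choosing the joining path for $\sigma$ inside the open, path-connected set $W$ (so $\sigma$ may leave $\omega$ across $\gamma$, which is harmless since all that matters is $u>c$ on $\sigma\setminus\{x_0\}$ and $\sigma\subset\Omega$) the separation argument with the connected set $U^{-}$ goes through unchanged. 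Second, the alternative Poincar\'e--Hopf/index argument you sketch at the end is not valid in the generality needed: it presumes that $\gamma$ is a critical-point-free (hence smooth) level curve so that $\nabla u$ is nonvanishing and transversal on $\partial\omega$, and exactly this fails in the intended application where $u$ has critical points on $\gamma$; so keep the first argument as the proof and drop, or suitably restrict, the index alternative.
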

\begin{proof}[Proof]Suppose by contradiction that there exists an interior critical point $x_0$ in $\omega$ such that $H<u(x_0)<t$. Without loss of generality, we assume that the multiplicity of $x_0$ is one. According to the assumption of connected component $\omega$ and $u|_{\gamma_I}=H,$ then Lemma \ref{le3.2} implies the following distribution for the connected components of $\{x\in \omega :u(x)<u(x_0)\}.$
\begin{center}
  \includegraphics[width=5.5cm,height=3.0cm]{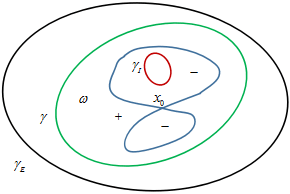}\\
  \scriptsize {\bf Figure 7.}~~The distribution for the connected components of $\{x\in \omega :u(x)<u(x_0)\}.$
\end{center}
By (\ref{1.5}) and  the strong maximum principle, this is impossible. Therefore there does not exist interior critical point in $\omega$.
\end{proof}

\begin{Lemma}\label{le3.4}%(le3.4)
Suppose that $u$ is a non-constant solution to (\ref{1.5}). Then $u$ has finite interior critical points in $\Omega$.
\begin{proof}[Proof]We set up the usual contradiction argument. Suppose that $u$ has infinite interior critical points in $\Omega,$ denoting by $x_1,x_2,\cdots$. The results of Lemma \ref{le3.1}, Lemma \ref{le3.2} and Lemma \ref{le3.3} show that there exists infinite connected components of $\{x\in \Omega :u(x)>u(x_i)\}~(i=1,2,\cdots)$. The strong maximum principle implies that there exists at least a maximum point on $\gamma_E$ for any connected component of $\{x\in \Omega :u(x)>u(x_i)\}~(i=1,2,\cdots)$. Therefore there exists infinite maximal points on $\gamma_E,$ this contradicts with the assumption. This completes the proof.
\end{proof}
\end{Lemma}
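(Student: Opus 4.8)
The plan is to argue by contradiction, following the scheme of Lemma \ref{le2.3} with Lemmas \ref{le3.1}--\ref{le3.3} playing the roles of Lemmas \ref{le2.1}--\ref{le2.2}. Assume $u$ has infinitely many interior critical points $x_1,x_2,\ldots$ with multiplicities $m_i\ge 1$. Since $\psi\ge H=u|_{\gamma_I}$ and $u$ is non-constant, the strong maximum principle gives $H<u<\mathop{\max}\limits_{\gamma_E}\psi$ throughout $\Omega$, so every critical value $u(x_i)$ lies in $(H,\mathop{\max}\limits_{\gamma_E}\psi)$ and all the lemmas of this subsection apply at each level $t=u(x_i)$.

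The first step I would carry out is an upper bound: for every $t\in(H,\mathop{\max}\limits_{\gamma_E}\psi)$ the super-level set $\{x\in\Omega:u(x)>t\}$ has at most $N$ connected components. Indeed, by Lemma \ref{le3.1} each component $A$ meets $\gamma_E$; the trace of $A$ on $\gamma_E$ is a nonempty relatively open subset of $\{x\in\gamma_E:\psi(x)>t\}$; distinct components meet disjoint arcs of that open set, because a boundary point of $\gamma_E$ with $\psi>t$ has a relative neighbourhood in $\overline\Omega$ contained in a single super-level component; and each arc of $\{x\in\gamma_E:\psi(x)>t\}$, having $\psi=t$ at its endpoints and $\psi>t$ inside, carries a local maximum point of $\psi$ on $\gamma_E$. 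Since $\psi$ has only $N$ local maxima on $\gamma_E$, the bound follows; in particular, by Lemma \ref{le3.2} together with the clustering count of Lemma \ref{le2.4}, Case 1 (legitimate here once Lemma \ref{le3.3} rules out the degenerate non-simply connected configuration around $\gamma_I$), no single level can carry more than $N$ of the $x_i$.

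It then remains to produce a level at which this bound is violated. If some critical value is shared by infinitely many of the $x_i$, Lemma \ref{le3.2} and the clustering count already give $\sum m_i+1>N$ components of that super-level set, a contradiction. Otherwise there are infinitely many distinct critical values $t^{(1)}>t^{(2)}>\cdots$ in $(H,\mathop{\max}\limits_{\gamma_E}\psi)$, and I would track the positive, $\le N$ valued integer $t\mapsto\sharp\{\text{components of }\{u>t\}\}$ as $t$ decreases: each critical value at which two super-level components merge strictly decreases it, and if one also uses that every component of a sub-level set meets $\partial\Omega$ (strong maximum principle) and that Lemma \ref{le3.3} forbids critical points in the non-simply connected sub-level components that wind around $\gamma_I$, one checks that every critical value forces such a merging in the super-level or the sub-level family. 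Hence the relevant count would decrease at infinitely many levels while staying in $\{1,\ldots,N\}$, which is impossible.

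The step I expect to be the main obstacle is precisely this last bookkeeping: unlike the simply connected case, a super-level component in an annulus need not be simply connected --- it may wind around $\gamma_I$ --- so a priori crossing a critical value could only change the topology (``fill a handle'') of a component rather than its number, and the hole around $\gamma_I$ can absorb the effect of a saddle. Ruling this out for every critical point, so that it genuinely alters the component count of the super-level or the sub-level family, is exactly what Lemma \ref{le3.3} is designed to supply. A cleaner but less self-contained alternative would be to cut $\Omega$ along a smooth crosscut from $\gamma_I$ to $\gamma_E$ avoiding all critical points, reducing to the simply connected situation of Lemma \ref{le2.3}.
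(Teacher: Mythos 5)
Your proposal is correct and takes essentially the same route as the paper: a contradiction argument built on Lemmas \ref{le3.1}--\ref{le3.3} and the strong maximum principle, showing that infinitely many interior critical points would force more than $N$ local maximum points of $\psi$ on $\gamma_E$. The paper's own proof is in fact terser -- it directly asserts that the infinitely many components of the sets $\{x\in\Omega: u(x)>u(x_i)\}$ each carry their own maximum point on $\gamma_E$ -- so your per-level bound by $N$ and the merging bookkeeping across distinct critical values (with the annular ``handle'' configuration excluded via Lemma \ref{le3.3}, exactly the role it plays in the paper) amount to a more detailed rendering of the same idea rather than a different approach.
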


\begin{Lemma}\label{le3.5}%(le3.5)
Let $x_1,x_2,\cdots,x_k$ be the interior critical points of $u$ in $\Omega$. Suppose that $u(x_1)=u(x_2)=\cdots=u(x_k)\equiv t$ for some $t\in (H,\mathop{\max }_{\gamma_E}\psi(x))$ and that all the critical points $x_1,x_2,\cdots,x_k$ together with the corresponding level lines of $\{x\in \Omega : u(x)=t\}$ clustering round these points form $q$ connected sets, where $q\geq 1$ and $m_1,m_2,\cdots,m_k$ are the multiplicities of critical points $x_1,x_2,\cdots,x_k$ respectively.\\
Case 1: Suppose that there exists a non-simply connected component $\omega$ of $\{x\in \Omega : u(x)<t\}$ and the external boundary $\gamma$ of $\omega$ is a simply closed curve between $\gamma_I$ and $\gamma_E$ such that $u$ has at least one critical point on $\gamma$, then
 \begin{equation}\label{3.1}\begin{array}{l}
\sharp\Big\{\mbox{the\ simply\ connected\ components\ $\omega$ of\ the\ sub-level\ set} ~ \{x\in\Omega : u(x)<t\}\\~~~ \mbox{such\ that\ $\omega$ meet\ the\ external\ boundary\ $\gamma_E$} \Big\}= \sum\limits_{i = 1}^k {{m_i}}+q-1.
\end{array}\end{equation}
Case 2: Suppose that there exists a non-simply connected component $\omega$ of $\{x\in \Omega : u(x)<t\}$ such that $\omega$ meets $\gamma_E$. In addition, we set $M_1$ and $M_2$ as the number of the connected components of the super-level set $\{x\in\Omega : u(x)>t\}$ and the sub-level set $\{x\in\Omega : u(x)<t\}$, respectively. Then
\begin{equation}\label{3.2}\begin{array}{l}
M_1\geq \sum\limits_{i = 1}^k {{m_i}}+1,~M_2\geq \sum\limits_{i = 1}^k {{m_i}}+1, ~\mbox{and}~M_1+M_2=2\sum\limits_{i = 1}^k {{m_i}}+q+1.
\end{array}\end{equation}
\end{Lemma}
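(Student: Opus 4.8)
The plan is to run the same inductive counting scheme as in Lemma \ref{le2.4} — attach the critical points $x_1,\dots,x_k$ one at a time and track how each attachment changes the number of connected components of $\{x\in\Omega:u(x)>t\}$ and $\{x\in\Omega:u(x)<t\}$ — but now in the annular geometry, with Lemmas \ref{le3.1}--\ref{le3.3} playing the role that Lemma \ref{le2.1} played in the simply connected case. The one genuinely new phenomenon is that a connected component of $\{u<t\}$ may fail to be simply connected, and this happens precisely when it encircles the interior boundary $\gamma_I$; since $\gamma_I$ is connected and $u\equiv H<t$ on it, there is at most one such component, which I call $\omega$, and by Lemma \ref{le3.3} it contains no interior critical point in its interior. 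The dichotomy in the statement is exactly whether the external boundary of $\omega$ is a Jordan curve lying inside $\Omega$ (Case 1) or $\omega$ already reaches $\gamma_E$ (Case 2).

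First I would record the structural facts common to both cases. By the strong maximum principle applied to \eqref{1.5}, $u$ has no interior extremum, so each component of $\{u>t\}$ and of $\{u<t\}$ carries a boundary extremum on its closure; together with Lemma \ref{le3.1} this forces every component of $\{u>t\}$ to meet $\gamma_E$, and every component of $\{u<t\}$ other than $\omega$ to meet $\gamma_E$ as well (it cannot contain $\gamma_I$, and it cannot be a bounded island, for that would produce an interior minimum). Such a component can encircle neither $\gamma_I$ (that is what $\omega$ does, and two nested such components cannot coexist) nor a component of $\{u>t\}$ (Lemma \ref{le3.1} forbids bounded components of $\{u>t\}$), hence it is simply connected. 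Thus in both cases $\{u<t\}$ has exactly one non-simply connected component, namely $\omega$, and the quantity on the left of \eqref{3.1} equals $M_2-1$.

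For Case 2 the key point is that, since $\omega$ meets $\gamma_E$, the connected set formed by the critical points and the level arcs of $\{u=t\}$ clustering around them \emph{cannot} encircle $\gamma_I$: an encircling loop in such a cluster would trap $\omega$ strictly inside it, contradicting $\omega\cap\gamma_E\neq\emptyset$. Consequently no attachment in the induction ever closes a cycle, exactly as in the simply connected case, and the bookkeeping is literally that of Lemma \ref{le2.4}: attaching a critical point $x_j$ of multiplicity $m_j$ along a shared component adds $m_j$ new components to each of $\{u>t\}$ and $\{u<t\}$, each of the $q$ clusters after the first adds one further component to $M_1$ or to $M_2$, and the interior hole around $\gamma_I$ merely sits inside the single component $\omega$ without ever being split; this gives $M_1\ge\sum_{i=1}^k m_i+1$, $M_2\ge\sum_{i=1}^k m_i+1$ and $M_1+M_2=2\sum_{i=1}^k m_i+q+1$, i.e.\ \eqref{3.2} (the bound on $M_2$ counts $\omega$ itself as one of the components). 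For Case 1 the external boundary $\gamma$ of $\omega$ is a Jordan curve separating $\gamma_I$ from $\gamma_E$ and carrying at least one critical point, and by Lemma \ref{le3.3} all of $x_1,\dots,x_k$ lie on $\gamma$ or in the region between $\gamma$ and $\gamma_E$. I would check that at a critical point of multiplicity $m_j$ on $\gamma$ exactly one adjacent sub-level sector lies inside $\gamma$ (hence belongs to $\omega$), while the remaining $m_j$ sub-level sectors and all $m_j+1$ super-level sectors lie on the $\gamma_E$ side; so $\gamma$, together with the critical points on it and any trees of level arcs hanging off into the $\gamma_E$ side, forms one of the $q$ clusters, and this cluster carries the unique cycle, which encircles $\omega$. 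The induction then differs from Lemma \ref{le2.4} only at the single attachment that closes this cycle: it fails to produce its expected new \emph{simply connected} sub-level component, because that region is precisely the already-present $\omega$, which is not counted among the simply connected ones. Hence the number of simply connected components of $\{u<t\}$ meeting $\gamma_E$ is one less than in the simply connected count, namely $M_2-1=\big(\sum_{i=1}^k m_i+q\big)-1=\sum_{i=1}^k m_i+q-1$, which is \eqref{3.1}.

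The step I expect to be the main obstacle is the topological bookkeeping in Case 1: one must be sure the defect is \emph{exactly} one, i.e.\ that the cluster containing $\gamma$ creates exactly one independent cycle, that this cycle encloses only $\omega$ and no component of $\{u>t\}$ (this is where Lemma \ref{le3.1} is essential), and that no second non-simply connected sub-level component can occur; and one must turn the figure-based attachment step into a precise statement about how a critical point of multiplicity $m_j$ lying on $\gamma$ distributes its $2(m_j+1)$ local sectors across $\gamma$. Once these topological points are pinned down, the remaining arithmetic is the same as in Lemma \ref{le2.4}.
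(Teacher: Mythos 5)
Your proposal follows essentially the same route as the paper: the same Lemma~\ref{le2.4}-style attachment induction transplanted to the annulus via Lemmas~\ref{le3.1}--\ref{le3.3}, with Case 2 reducing to the simply connected bookkeeping (no cycle around $\gamma_I$ can close, giving \eqref{3.2}) and Case 1 incurring exactly one defect because the unique cycle through the critical points on $\gamma$ encloses the single non-simply connected component $\omega$, giving $\sum_{i=1}^k m_i+q-1$ as in \eqref{3.1}. The paper's own proof is the same induction presented with figures (base case $k=1$ contributing $m_1$ sub-level components meeting $\gamma_E$ in Case 1, plus one per additional cluster), so your ``defect-of-one'' reorganization and your explicit argument that $\omega$ is the only non-simply connected sub-level component are refinements of, not departures from, its argument.
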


\begin{proof}[Proof] (i) Case 1: We divide the proof into two steps.

  Step 1: When $q=1,$ by induction. When $k=1,$ the result holds by Lemma \ref{le3.1} and Lemma \ref{le3.2}. Assume that $1\leq k\leq n$ the connected set, which consists of $k$ critical points and the connected components clustering round these points, contains exactly $\sum_{i = 1}^k {{m_i}}$ components $\omega$ of the sub-level set $\{x\in\Omega : u(x)<t\}$ such that $\omega$ meet the external boundary $\gamma_E$. Let $k=n+1.$ Let $A$ be the set which consists of the points $x_1,x_2,\cdots,x_{n+1}$ together with the respective components clustering round these points. We may assume that the points $x_1,x_2,\cdots,x_n$ together with the respective components clustering round these points form a connected set, denotes by $B.$ By Lemma \ref{le3.1} we know that $A$ cannot surround a component of $\{x\in\Omega : u(x)>t\}.$  Up to renumbering, therefore there is only one component of $\{x\in\Omega : u(x)<t\}$ whose boundary $\alpha$ contains both $x_n$ and $x_{n+1}$. By Lemma \ref{le3.3}, next we give the distribution for the level lines of $\{x\in\Omega : u(x)=t\}.$
\begin{center}
  \includegraphics[width=6cm,height=3.4cm]{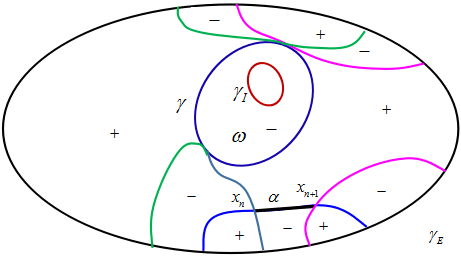}\\
  \scriptsize {\bf Figure 8.}~~The distribution for the level lines of $\{x\in\Omega : u(x)=t\}$.
\end{center}
 Since both $A$ and $B$ are connected. By using Lemma \ref{le3.2} and the inductive assumption to $B,$ then we know that $A$ contains exactly
 $$ \sum\limits_{i = 1}^n {{m_i}}+(m_{n+1}+1)-1=\sum\limits_{i = 1}^{n+1} {{m_i}}$$
connected components $\omega$ of the sub-level set $\{x\in\Omega : u(x)<t\}$ such that $\omega$ meet the external boundary $\gamma_E$. This completes the proof of step 1.

Step 2: When $q\geq 2.$ Since the number of connected sets of the level lines $\{x\in\Omega : u(x)=t\}$ together with $x_1,\cdots,x_k$ increases one leading the number of connected components of $\{x\in\Omega : u(x)<t\}$ increases one. If all the critical points $x_1,x_2,\cdots,x_k$ together with the level lines $\{x\in\Omega : u(x)=t\}$ clustering round these points form $q$ connected sets. By the results of step 1, then we have
\begin{equation*}\begin{array}{l}
 \sharp\Big\{\mbox{the\ simply\ connected\ components\ $\omega$ of\ the\ sub-level\ set} ~ \{x\in\Omega : u(x)<t\}\\~~~ \mbox{such\ that\ $\omega$ meet\ the\ external\ boundary\ $\gamma_E$} \Big\}= \sum\limits_{i = 1}^k {{m_i}}+(q-1).
\end{array}\end{equation*}
This completes the proof of case 1.

(ii) Case 2: We divide the proof of case 2 into two steps.

  Step 1: When $q=1$, by induction. When $k=1,$ the result holds by Lemma \ref{le3.1} and Lemma \ref{le3.2}. Assume that $1\leq k\leq n$ the connected set, which consists of $k$ critical points and the connected components clustering round these points, contains exactly $\sum\limits_{i = 1}^k {{m_i}}+1$ components of the super-level set $\{x\in\Omega : u(x)>t\}$. Let $k=n+1.$ Let $A$ be the set which consists of the points $x_1,x_2,\cdots,x_{n+1}$ together with the respective components clustering round these points. We may assume that the points $x_1,x_2,\cdots,x_n$ together with the respective components clustering round these points form a connected set, denotes by $B.$ By Lemma \ref{le3.1} we know that $A$ cannot surround a component of $\{x\in\Omega : u(x)<t\}.$  Up to renumbering, therefore there is only one component of $\{x\in\Omega : u(x)>t\}$ whose boundary $\gamma$ contains both $x_n$ and $x_{n+1}$. Next we give the distribution for the level lines of $\{x\in\Omega : u(x)=t\}.$
\begin{center}
  \includegraphics[width=6cm,height=3.6cm]{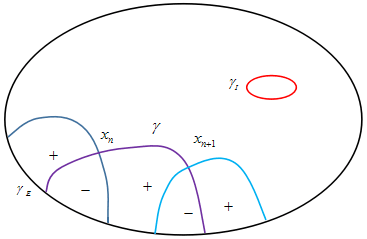}\\
  \scriptsize {\bf Figure 9.}~~The distribution for the level lines of $\{x\in\Omega : u(x)=t\}$.
\end{center}
 Since both $A$ and $B$ are connected. By using Lemma \ref{le3.2} and the inductive assumption to $B,$ then we know that $A$ contains exactly
 $$ (\sum\limits_{i = 1}^n {{m_i}}+1)+(m_{n+1}+1)-1=\sum\limits_{i = 1}^{n+1} {{m_i}}+1$$
connected components of the super-level set $\{x\in\Omega : u(x)>t\}$.

Step 2: The proof is similar to the case 2 of Lemma \ref{le2.4}. When $q\geq 2.$ Since the theorem of Hartman and Wintner \cite{Hartman} shows that the interior critical points of solution $u$ are isolated, so the number of connected sets of the level lines $\{x\in \Omega : u(x)=t\}$ together with $x_1,\cdots,x_k$ increases one leading the number of connected components of $\{x\in\Omega : u(x)>t\}$ or $\{x\in\Omega : u(x)<t\}$ increases one. If all the critical points $x_1,x_2,\cdots,x_k$ together with the level lines of $\{x\in \Omega : u(x)=t\}$ clustering round these points form $q$ connected sets. By the results of step 1, then we have
\begin{equation*}\begin{array}{l}
M_1\geq \sum\limits_{i = 1}^k {{m_i}}+1,~~~M_2\geq \sum\limits_{i = 1}^k {{m_i}}+1,
\end{array}\end{equation*}
and
\begin{equation*}\begin{array}{l}
M_1+M_2=2(\sum\limits_{i = 1}^k {{m_i}}+1)+(q-1)=2\sum\limits_{i = 1}^k {{m_i}}+q+1.
\end{array}\end{equation*}
This completes the proof of case 2.
\end{proof}

 \begin{Remark}\label{re3.6}  Note that if all critical values are equal (i.e., $u(x_1)=\cdots=u(x_k)\equiv t$) and there exists a non-simply connected component $\omega$ of $\{x\in \Omega : u(x)<t\}$ for critical value $t$, where the external boundary $\gamma$ of $\omega$ is a simply closed curve in $\Omega$ as in Lemma \ref{le3.3}, then $u$ has at least one critical point on $\gamma$. In fact, suppose by contradiction that $u$ has no critical point on $\gamma$. Without loss of generality, we may assume that $\psi(x)$ has only two local maximal points $q_1, q_2$ on $\gamma_E$ and one critical point $x_1$ in $\Omega\setminus \overline{\omega}$ such that $u(x_1)=t$ and the multiplicity of $x_1$ is one, we denote the non-simply connected component of $\{x\in\Omega: u(x)>t\}$ by $A$. The distribution for the level lines of $\{x\in\Omega: u(x)=t\}$ as follows:
 \begin{center}
  \includegraphics[width=5.5cm,height=3.5cm]{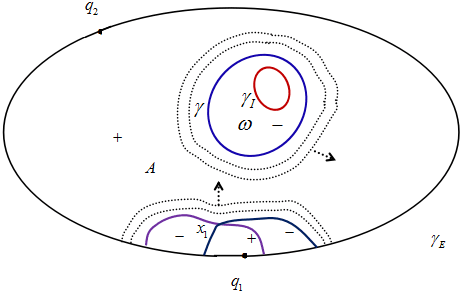}\\
  \scriptsize {\bf Figure 10.}~~The distribution for the level lines of $\{x\in\Omega : u(x)=t\}$.
\end{center}
 By using the method of step 3 of the proof of Theorem 1.2, this would imply that either: $u(x)=u(q_2)$ in interior points of $A$, or: there exists two level lines intersect in $A$, i.e., there exists critical points in $A$. This is a contradiction.
\end{Remark}

We are now prepare to prove Theorem \ref{th1.3}.
\begin{proof}[Proof of Theorem \ref{th1.3}]  (i) Case 1: If $u(x_1)=u(x_2)=\cdots=u(x_k)\equiv t$ for some $t\in (H,\mathop{\max }_{\gamma_E}\psi(x)).$ By the results of case 1 of Lemma \ref{le3.5}, we know that
\begin{equation*}\begin{array}{l}
 \sharp\Big\{\mbox{the\  simply\ connected\ components\ of\ the\ sub-level\ set} ~ \{x\in\Omega : u(x)<t\} \Big\}\geq \sum\limits_{i = 1}^k {{m_i}}.
\end{array}\end{equation*}
Therefore, in this case the sub-level set always has at least $\sum\limits_{i = 1}^k {{m_i}}$ connected components $\omega$ such that $\omega$ meet $\gamma_E$. Using the strong maximum principle, we have that $u$ exists at least $\sum\limits_{i = 1}^k {{m_i}}$ local minimal points on $\gamma_E.$ Hence, we have
\[ \sum\limits_{i = 1}^k {{m_i}}\leq N.\]

(ii) Case 2: The values at critical points $x_1,\cdots,x_k$ are not totally equal. Next we need divide the proof of case 2 into two situations.

(1) Situation 1: If there exists a non-simply connected component $\omega$ of $\{x\in \Omega : u(x)<t\}$ for some $t\in (H,\mathop{\max }_{\gamma_E}\psi(x))$ such that $\omega$ meets $\gamma_E$. Without loss of generality, we may suppose that
\begin{equation}\label{3.3}
\begin{split}
u(x_1)&=\cdots=u(x_{j_1})<u(x_{j_1+1})=\cdots=u(x_{j_2})<\cdots<\cdots\\
&<u(x_{j_{n-1}+1})=\cdots=u(x_{j_{n}}),
\end{split}
\end{equation}
where $x_1,\cdots,x_{j_1},\cdots, x_{j_2},\cdots, x_{j_{n}}$ are different critical points in $\Omega$, $j_{n}=k$ and $n\geq 2.$ Now we put
\begin{equation*}\begin{array}{l}
E_{j}:=\Big\{\omega : \mbox{open\ set}~ \omega ~ \mbox{is\ a\ connected\ component\ of} ~ \{x\in \Omega: u(x)>u(x_j)\}\Big\}~(j=j_1,j_2,\cdots,j_n),
\end{array}\end{equation*}
and
\begin{equation*}\begin{array}{l}
G_{j_n}:=\Big\{\omega : \mbox{open\ set}~ \omega~ \mbox{is\ a\ connected\ component\ of} ~ \{x\in \Omega: u(x)>u(x_{j})\}(j=j_1,j_2,\cdots,j_n)\\~~~~~~~~~~~~ \mbox{such\ that\ there\ does\ not\ exist\ interior\ critical\ point\ in} ~\omega\Big\}.
\end{array}\end{equation*}
According to the definition, we know that $G_{j_n}$ consists of disjoint components. Denotes
\[|G_{j_n}|:=\sharp\{\omega : \omega~ \mbox{is\ a\ connected\ component\ of} ~G_{j_n}\}.\]

To illustrate $G_{j_n}$, let us consider an illustration for $G_{j_2}$. Assume that $u$ has only three critical points $x_{j_1},x_{j_1+1},x_{j_2}$ with respective multiplicity $m_{j_1}=1,m_{j_1+1}=1,m_{j_2}=2$ in $\Omega$ and $u(x_{j_1})<u(x_{j_1+1})=u(x_{j_2})$. The distribution of elements of $G_{j_2}$ as follows:
\begin{center}
  \includegraphics[width=6.6cm,height=3.8cm]{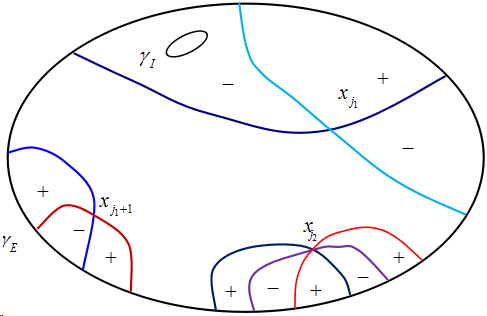}\\
  \scriptsize {\bf Figure 11.}~~The distribution of elements of $G_{j_2}$.
\end{center}
By the definition of $G_{j_n},$ we know $|G_{j_2}|=6.$

Now let us show that $\big|G_{j_s}\big|\geq \sum\limits_{i = 1}^{{j_s}} m_i+1$ by induction on the number $s.$ When $s=1,$ the result holds by case 1. Assume that $\big|G_{j_s}\big|\geq \sum\limits_{i = 1}^{{j_s}} m_i+1$ for $1\leq s\leq n-1.$  Let $s=n.$ Then, by (\ref{3.4}) and the definition of $E_{j}$, we have
\[\{x_{j_{n-1}+1},\cdots,x_{j_{n}}\}\subset \bigcup \limits_{\omega  \in {E_{{j_{n-1}}}}} \omega,\]
where $\omega$ is a connected component of $\{x\in \Omega: u(x)>u(x_{j_{n-1}})\}.$

Let us assume that $\{x_{j_{n-1}+1},\cdots,x_{j_{n}}\}$ are contained in exactly $\overline{q}$ components $\omega_1,\cdots,\omega_{\overline{q}}.$ Then $x_{j_{n-1}+1},\cdots,x_{j_{n}}$ together with the corresponding level lines of $\{x\in \Omega: u(x)=u(x_{j_{n}})\}$ clustering round these points at least form $\overline{q}$ connected sets. By the case 3 of Lemma \ref{le3.5}, we have
\begin{equation*}\begin{array}{l}
\widetilde{M}:=\sharp\Big\{\mbox{the\ connected\ components\ of}~ \{x\in \Omega: u(x)>u(x_{j_{n}})\}~\mbox{in\ all}~\omega_j~(j=1,\cdots,\overline{q})\Big\}\\ ~~~~\geq \sum\limits_{i=j_{n-1}+ 1}^{j_{n}} {{m_i}}+\overline{q}.
\end{array}\end{equation*}
By using the definition of $\big|G_{j_{n}}\big|$ and the inductive assumption to $1\leq s\leq n-1,$ since $\{x_{j_{n-1}+1},\cdots,x_{j_{n}}\}$ are contained in exactly $\overline{q}$ components $\omega_1,\cdots,\omega_{\overline{q}},$ so when we calculate the number of $|G_{j_n}|$, the number of $|G_{j_{n-1}}|$ will be reduced by $\overline{q}$. Then we have
\begin{equation*}\begin{array}{l}
\big|G_{j_{n}}\big|=\big|G_{j_{n-1}}\big|+\widetilde{M}-\overline{q}\geq \big|G_{j_{n-1}}\big|+(\sum\limits_{i=j_{n-1}+ 1}^{j_{n}} {{m_i}}+\overline{q})-\overline{q}\geq\sum\limits_{i=1}^{j_{n}} {{m_i}}+1.
\end{array}\end{equation*}
By the strong maximum principle and Lemma \ref{le3.1}, we have that $u$ has at least $\sum\limits_{i = 1}^k {{m_i}}+1$ local maximal points on $\gamma_E.$ Therefore, we obtain
\[ \sum\limits_{i = 1}^k {{m_i}}+1\leq N.\]

(2) Situation 2: Suppose that there exists a non-simply connected component $\omega$ of $\{x\in \Omega : u(x)<t\}$ for some $t\in (H,\mathop{\max }_{\gamma_E}\psi(x))$ and the external boundary $\gamma$ of $\omega$ is a simply closed curve between $\gamma_I$ and $\gamma_E$ such that $u$ has at least one critical point on $\gamma$. The idea of proof is essentially same as the case 2 of the proof of Theorem \ref{th1.1} and the situation 1 of the proof of Theorem \ref{th1.3}. Here we omit the proof. This completes the proof of case 2.
\end{proof}

\subsection{Proof of Theorem \ref{th1.4}}
~~~~~In this subsection, we investigate the geometric structure of interior critical points of a solution in a planar, bounded, smooth annular domain $\Omega$ with the interior boundary $\gamma_I$ and the external boundary $\gamma_E$ for the case of $\psi$ having only $N$ global maximal points and $N$ global minimal points on $\gamma_E,$ where $N\geq 2.$ Next we show (\ref{1.8}) or (\ref{1.7}) by proving the three ``just right''s.

\begin{proof}[Proof of Theorem \ref{th1.4}] We divide the proof into two cases.

(1) Case 1: If there exists a non-simply connected component $\omega$ of $\{x\in \Omega : u(x)<t\}$ for some $t\in (H,\mathop{\max }_{\gamma_E}\psi(x))$ such that $\omega$ meets $\gamma_E$. Next we need divide the proof of case 1 into five steps.

Step 1, we should show that $u$ has at least one interior critical point in $\Omega.$ Suppose by contradiction that $|\nabla u|>0$ in $\Omega.$ Next, the idea of proof is essentially same as the step 1 in the proof of Theorem \ref{th1.2}. So we omit the proof.

Step 2, the first ``just right'': According to Lemma \ref{le3.4}, we assume that the interior critical points of $u$ are $x_1,x_2,\cdots,x_k$. We show that $u(x_1)=u(x_2)=\cdots=u(x_k)=t~\mbox{for\ some} ~ t\in \mathbb{R},$ i.e., we exclude the case of case 2 in Theorem \ref{th1.3}. According to the assumption of Theorem \ref{th1.4}, let $q_1,\cdots,q_N$ and $p_1,\cdots,p_N$ be respectively the global maximal points and minimal points on $\gamma_E.$ We set up the usual contradiction argument. We assume that the values at critical points $x_1,\cdots,x_k$ are not totally equal. Without loss of generality, we suppose that $u(x_1)<u(x_2)$ and that $m_1,m_2$ are the respective multiplicities of $x_1,x_2.$ Then, by Lemma \ref{le3.1}, we know that any connected component of $\{x\in \Omega:u(x)>u(x_1)\}$ has to meet the boundary $\gamma_E$ and $u(p_1)<u(x_1)$, where $p_1$ is the minimal point of some one connected component of $\{x\in \Omega:u(x)<u(x_1)\}$ on $\gamma_E$. At the same time, we know that any connected component $C$ of $\{x\in \Omega:u(x_1)<u(x)<u(x_2)\}$ has to meet the boundary $\gamma_E$ and $u(x_1)<u(p_2)<u(x_2)$, where $p_2$ is the minimal point of connected component $C$ on $\gamma_E$ (see Figure 12). Then $u(p_1)\neq u(p_2)$, which contradicts with the assumption of Theorem \ref{th1.4}. This completes the proof of step 2.
\begin{center}
  \includegraphics[width=6.0cm,height=3.7cm]{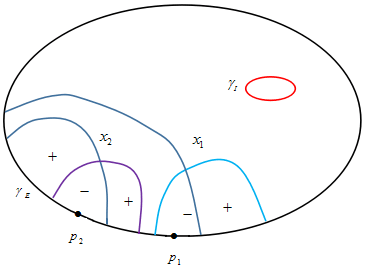}\\
  \scriptsize {\bf Figure 12.} ~~The distribution of the connected components.
\end{center}

Step 3, the second ``just right'': we show that $x_1,x_2,\cdots,x_k$ together with the corresponding level lines of $\{x\in \Omega : u(x)=t\}$ clustering round these points exactly form one connected set. Without loss of generality, we suppose by contradiction that $x_1,x_2,\cdots,x_k$ together with the level lines of $\{x\in \Omega : u(x)=t\}$ clustering round these points  form two connected sets.
Therefore, there exists a connected components of $\{x\in \Omega: u(x)<t\}$, which meets two parts $\gamma_1,\gamma_2$ of $\gamma_E$, denoting by $A$ (see Figure 13).
\begin{center}
  \includegraphics[width=6.0cm,height=3.7cm]{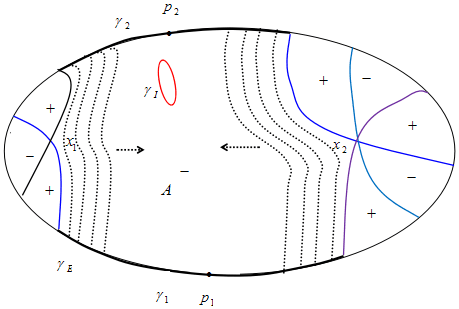}\\
  \scriptsize {\bf Figure 13.}~~The distribution of some level lines $\{x\in \Omega:u(x)=\widetilde{t},H<\widetilde{t}<t \}.$
\end{center}
Note that $u$ is monotonically decreasing on the connected components of boundary $\gamma_E$ from one maximal point to the near minimal point. Therefore, $\{x\in A: u(x)=t-\epsilon\}$ exactly exists two level lines in $A$ for any $\epsilon$ such that $0<\epsilon<t-H$. This is impossible, because this would imply that either: $u(x)=\mathop {\min}\limits_{\gamma_E}\psi(x)=H~\mbox{in\ interior\ points\ of}~A$, or: there exists two level lines intersect in $A$, i.e., there exists critical points in $A.$ This completes the proof of step 3.

Step 4, the third ``just right'': we show that every simply connected component of $\{x\in \Omega: u(x)>t\}$ has exactly one global maximal point on boundary $\gamma_E.$ In fact, we assume that some simply connected component $B$ of $\{x\in \Omega: u(x)>t\}$ exists two global maximal points on boundary $\gamma_E.$ According to $\psi$ has only $N$ global maximal points and $N$ global minimal points on $\gamma_E$, then there must exist a minimal point $\widetilde{p}$ between the two maximal points on $\gamma_E$ such that $u(\widetilde{p})=H$. Since $u(x)>t>H$ in $B$, by the continuity of solution $u$, this contradicts with the definition of connected component $B$. This completes the proof of step 4.

Step 5, By the results of step 3 and the results of case 2 in Lemma \ref{le3.5}, we have
 \begin{equation}\label{3.4}\begin{array}{l}
 \sharp\Big\{\mbox{the\ simply\ connected\ components\ of\ the\ super-level\ set} ~ \{x\in\Omega : u(x)>t\} \Big\}= \sum\limits_{i = 1}^k {{m_i}}+1.
\end{array}\end{equation}
On the other hand, using the results of step 4 and the strong maximum principle, therefore we obtain
\begin{equation}\label{3.5}\begin{array}{l}
 \sum\limits_{i = 1}^k {{m_i}}+1=N.
\end{array}\end{equation}

(2) Case 2: Suppose that there exists a non-simply connected component $\omega$ of $\{x\in \Omega : u(x)<t\}$ for some $t\in (H,\mathop{\max }_{\gamma_E}\psi(x))$ and the external boundary $\gamma$ of $\omega$ is a simply closed curve between $\gamma_I$ and $\gamma_E$ such that $u$ has at least one critical point on $\gamma$. The idea of proof is essentially same as the proof of case 1. Next we need divide the proof of case 2 into four steps.

Step 1, the first ``just right'': According to Lemma \ref{le3.4}, we assume that the interior critical points of $u$ are $x_1,x_2,\cdots,x_k$. We show that $u(x_1)=u(x_2)=\cdots=u(x_k)=t~\mbox{for\ some} ~ t\in \mathbb{R},$ i.e., we exclude the case of case 2 in Theorem \ref{th1.3}. The proof is same as the step 2 of the proof of case 1.

Step 2, the second ``just right'': we show that $x_1,x_2,\cdots,x_k$ together with the corresponding level lines of $\{x\in \Omega: u(x)=t\}$ clustering round these points exactly form one connected set. The proof is same as the step 3 of the proof of case 1.

Step 3, the third ``just right'': we show that every simply connected component $\omega$ of $\{x\in \Omega: u(x)<t\}$ has exactly one global minimal point on $\gamma_E,$ where $\omega$ meets the external boundary $\gamma_E.$  In fact, we assume that some simply connected component $\omega$ of $\{x\in \Omega: u(x)<t\}$ exists two global minimal points on boundary $\gamma_E.$ According to $\psi$ has only $N$ global maximal points and $N$ global minimal points on $\gamma_E$, then there must exist a maximal point $\overline{p}$ between the two minimal points on $\gamma_E$ such that $u(\overline{p})=\max_{\gamma_E} \psi(x)$. Since $u(x)<t<\max_{\gamma_E} \psi(x)$ in $\omega$, by the continuity of solution $u$, this contradicts with the definition of connected component $\omega$. This completes the proof of step 3.

Step 4, By the results of step 2 and the results of step 1 of case 1 in Lemma \ref{le3.5}, we have
 \begin{equation}\label{3.6}\begin{array}{l}
 \sharp\Big\{\mbox{the\ simply\ connected\ components\ $\omega$ of\ the\ sub-level\ set} ~ \{x\in\Omega : u(x)<t\}\\~~~ \mbox{such\ that\ $\omega$ meet\ the\ external\ boundary\ $\gamma_E$} \Big\}=\sum\limits_{i = 1}^k {{m_i}}.
\end{array}\end{equation}
On the other hand, using the results of step 3 and the strong maximum principle, therefore we obtain
\begin{equation}\label{3.7}\begin{array}{l}
 \sum\limits_{i = 1}^k {{m_i}}=N.
\end{array}\end{equation}
This completes the proof of Theorem \ref{th1.4}.
\end{proof}

Let $N=1$, we have:
\begin{Corollary}\label{co3.6}
 Suppose that $u$ is a non-constant solution of (\ref{1.5}) and that $\psi$ has exactly one maximal point on $\gamma_E$. Then $u$ has at most one interior critical point $p$ in $\Omega.$ If $u$ has one interior critical point $p$, then the multiplicity of the interior critical point $p$ is one.
\end{Corollary}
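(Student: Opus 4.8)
The plan is to obtain this as an immediate consequence of Theorem \ref{th1.3}, which applies in the present situation: we are in the setting $\psi(x)\ge H$ of Section 3, and ``$\psi$ has exactly one maximal point on $\gamma_E$'' is precisely the case $N=1$ (the unique maximal point is then both local and global). First I would invoke Theorem \ref{th1.3} to get $\sum_{i=1}^k m_i\le 1$, where $x_1,\dots,x_k$ are the interior critical points of $u$ (finitely many, by Lemma \ref{le3.4}) and $m_1,\dots,m_k$ their multiplicities. By the Hartman--Wintner structure result underlying Lemma \ref{le3.2}, the multiplicity of any interior critical point is a positive integer, so $m_i\ge 1$ for every $i$; hence $k\le\sum_{i=1}^k m_i\le 1$, i.e. $k\le 1$. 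If $k=1$, there is a single interior critical point $p=x_1$, and then $1\le m_1\le 1$ forces $m_1=1$. This establishes both assertions of the corollary.

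For a self-contained argument one could instead re-run the proof of Theorem \ref{th1.3} specialised to $N=1$. For $t\in(H,\max_{\gamma_E}\psi)$, Lemma \ref{le3.1} and the strong maximum principle show that every connected component of the super-level set $\{x\in\Omega:u(x)>t\}$ meets $\gamma_E$ and therefore carries a local maximal point of $\psi$ on $\gamma_E$; since there is only one such maximum, at most one component can occur. Feeding this into Lemma \ref{le3.2} (and into Lemma \ref{le3.5} / Remark \ref{re3.6} to handle the possible non-simply connected components of $\{u<t\}$, as in Cases 1 and 2 of the proof of Theorem \ref{th1.3}) caps $\sum_{i=1}^k m_i$ by $1$.

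There is essentially no hard step here; the only point deserving a line of verification is that the hypotheses of Theorem \ref{th1.3} are genuinely met, in particular that $u$ non-constant forces $\max_{\overline\Omega}u=\max_{\gamma_E}\psi>H$, so that the interior critical values lie in the open interval $(H,\max_{\gamma_E}\psi)$ to which Lemmas \ref{le3.1}--\ref{le3.5} apply. This is immediate from the strong maximum principle applied to (\ref{1.5}): since $u\equiv H$ on $\gamma_I$ and $u=\psi\ge H$ on $\gamma_E$, the minimum of $u$ over $\overline\Omega$ equals $H$ and is attained only on $\partial\Omega$, while $u$ non-constant gives $\max_{\overline\Omega}u>H$, which can only be attained on $\gamma_E$.
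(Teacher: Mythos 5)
Your proposal is correct and coincides with the paper's own (implicit) argument: the corollary is stated there as the specialization $N=1$ of Theorem \ref{th1.3}, so $\sum_{i=1}^k m_i\le 1$ together with $m_i\ge 1$ forces $k\le 1$ and, when $k=1$, $m_1=1$. Your extra checks (finiteness via Lemma \ref{le3.4} and that the hypotheses of Theorem \ref{th1.3} hold) are sound but not needed beyond what the theorem already provides.
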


According to Theorem \ref{th1.3} and Theorem \ref{th1.4}, we can easily have the following results.
\begin{Corollary}\label{co3.7}
Let $\Omega$ be a bounded smooth annular domain with the interior boundary $\gamma_I$ and the external boundary $\gamma_E$ in $\mathbb{R}^{2}$. Suppose that $\psi(x)\in C^1(\overline{\Omega}),$ $H$ is a given constant, $\psi(x)\leq H.$ Let $u$ be a non-constant solution of (\ref{1.5}). Then we have:

(i) If $\psi$ has $N$ local minimal points on $\gamma_E,$ then $u$ has finite interior critical points and inequality (\ref{1.6}) holds;

(ii) If $\psi$ has only $N$ global minimal points and $N$ global maximal points on $\gamma_E,$ i.e., all the maximal and minimal points of $\psi$ are global, then $u$ has finite interior critical points and equality (\ref{1.7}) or  (\ref{1.8}) holds;

(iii) If $\psi$ has exactly one minimal point on $\gamma_E.$ Then $u$ has at most one interior critical point $p$ in $\Omega.$ If $u$ has one interior critical point $p$, then the multiplicity of the interior critical point $p$ is one.
\end{Corollary}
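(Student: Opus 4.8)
The plan is to deduce Corollary~\ref{co3.7} from Theorem~\ref{th1.3}, Theorem~\ref{th1.4} and Corollary~\ref{co3.6} by reflecting the solution about the constant $H$. First I would introduce $v:=2H-u$. Since $\nabla v=-\nabla u$ and $\partial^2 v/\partial x_i\partial x_j=-\partial^2 u/\partial x_i\partial x_j$, equation (\ref{1.5}) for $u$ is equivalent to
\[
\sum_{i,j=1}^{2}\widetilde a_{ij}(\nabla v)\frac{\partial^2 v}{\partial x_i\partial x_j}=0\ \ \text{in}\ \Omega,\qquad v|_{\gamma_I}=H,\ \ v|_{\gamma_E}=2H-\psi(x),
\]
where $\widetilde a_{ij}(p):=a_{ij}(-p)$. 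Because $a_{ij}$ is smooth and $L$ is uniformly elliptic, the coefficients $\widetilde a_{ij}$ are smooth and the associated operator is uniformly elliptic with the same ellipticity constants; moreover $2H-\psi(x)\in C^1(\overline\Omega)$ and $2H-\psi(x)\ge H$ since $\psi(x)\le H$. Thus $v$ is a non-constant solution of a boundary value problem of exactly the form treated in Sections 2 and 3, with external datum $\widetilde\psi:=2H-\psi$.

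Next I would record the dictionary between $u$ and $v$. The critical sets coincide, $\{\,x\in\Omega:\nabla v(x)=0\,\}=\{\,x\in\Omega:\nabla u(x)=0\,\}$, and each interior critical point has the same multiplicity for $v$ as for $u$: by the Hartman--Wintner description invoked in Lemma~\ref{le3.2}, near a critical point $x_0$ of multiplicity $m$ the function $u-u(x_0)$ behaves like the real part of $c(z-z_0)^{m+1}$, hence $v-v(x_0)=-(u-u(x_0))$ behaves like the real part of $(-c)(z-z_0)^{m+1}$, again with $m+1$ nodal arcs meeting at $x_0$. On the boundary, a point of $\gamma_E$ is a local (respectively global) minimal point of $\psi$ if and only if it is a local (respectively global) maximal point of $\widetilde\psi=2H-\psi$, and symmetrically for maxima; so in case (i) $\widetilde\psi$ has $N$ local maximal points on $\gamma_E$, in case (ii) $\widetilde\psi$ has only $N$ global maximal points and $N$ global minimal points on $\gamma_E$, and in case (iii) $\widetilde\psi$ has exactly one maximal point on $\gamma_E$.

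With this in place the three assertions follow at once. Applying Theorem~\ref{th1.3} to $v$ gives $\sum_{i=1}^{k}m_i\le N$, i.e.\ (\ref{1.6}) for $u$; applying Theorem~\ref{th1.4} to $v$ gives $\sum_{i=1}^{k}m_i=N$ or $\sum_{i=1}^{k}m_i+1=N$, i.e.\ (\ref{1.7}) or (\ref{1.8}) for $u$; and applying Corollary~\ref{co3.6} to $v$ shows that $u$ has at most one interior critical point, and that this point, if it exists, has multiplicity one. The only delicate point --- and the closest thing to an obstacle in an otherwise routine reduction --- is the bookkeeping: checking that the reflection $\widetilde a_{ij}(p)=a_{ij}(-p)$ preserves smoothness and uniform ellipticity, and that the notion of multiplicity used throughout the paper is invariant under $u\mapsto 2H-u$. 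Once these are verified, no further analysis is needed and the proof is complete.
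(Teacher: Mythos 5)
Your proposal is correct and follows exactly the route the paper intends: the paper gives no written proof, merely asserting that Corollary \ref{co3.7} follows ``according to Theorem \ref{th1.3} and Theorem \ref{th1.4}'' (together with Corollary \ref{co3.6} for part (iii)), and your reflection $v=2H-u$ with coefficients $\widetilde a_{ij}(p)=a_{ij}(-p)$ is precisely the careful way to carry out that deduction, since $v$ is again a non-constant solution of an equation of the form (\ref{1.1}) with $v|_{\gamma_I}=H$, $v|_{\gamma_E}=2H-\psi\ge H$, the critical points and multiplicities are unchanged, and minima/maxima of $\psi$ on $\gamma_E$ become maxima/minima of $2H-\psi$. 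If anything, your write-up is more explicit than the paper, which leaves all of this bookkeeping implicit.
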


\section{The case of multiply connected domains: $\mathop {\min}\limits_{\gamma_E}\psi(x)<H<\mathop {\max}\limits_{\gamma_E}\psi(x)$}
\subsection{Proof of Theorem \ref{th1.5}}
~~~~~~~In this subsection, we put $z:=\mathop{\min }\limits_{\gamma_E}\psi(x),~Z:=\mathop{\max }\limits_{\gamma_E}\psi(x).$ In order to prove Theorem \ref{th1.5}, we need the following basic lemmas.
\begin{Lemma}\label{le4.1} %(le4.1)
Let $u$ be a non-constant solution of (\ref{1.5}), constant $H$ satisfies $z<H<Z.$\\
(1) For any  $t\in (H,Z),$ then any connected component $\omega$ of $\{x\in \Omega : u(x)>t\}$ has to meet the external boundary $\gamma_E.$\\
(2) For any  $t\in (z,H],$ then the connected component $\omega$ of $\{x\in \Omega : u(x)>t\}$ is simply connected or non-simply connected. If $\omega$ is simply connected, which has to meet the external boundary $\gamma_E.$
\end{Lemma}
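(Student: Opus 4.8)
The plan is to mimic the proof of Lemma~\ref{le3.1}: read the equation in (\ref{1.5}) as a linear uniformly elliptic equation with coefficients $a_{ij}(\nabla u(x))$ and no zeroth-order term, and apply the strong maximum principle on the relevant super-level components. The only genuinely new point is to keep track of how a connected component $\omega$ of $\{x\in\Omega:u(x)>t\}$ can touch the interior boundary $\gamma_I$, on which $u\equiv H$; this is exactly what makes the range $t\in(H,Z)$ behave differently from $t\in(z,H]$.

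For part (1), fix $t\in(H,Z)$ and let $\omega$ be a connected component of $\{x\in\Omega:u(x)>t\}$. Since $u\equiv H<t$ on $\gamma_I$, continuity forces $\overline{\omega}\cap\gamma_I=\emptyset$, hence $\partial\omega\subset\{u=t\}\cup\gamma_E$. If $\overline{\omega}\cap\gamma_E=\emptyset$, then $\overline{\omega}$ is a compact subset of $\Omega$, so $\omega$ is a bounded subdomain with $u=t$ on $\partial\omega$ and $u>t$ inside, contradicting the maximum principle. Hence $\omega$ meets $\gamma_E$.

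For part (2), fix $t\in(z,H]$ and let $\omega$ be a connected component of $\{x\in\Omega:u(x)>t\}$. In general $\omega$ need not reach $\gamma_E$: when $t<H$ one has $u>t$ on a relative neighbourhood of $\gamma_I$ in $\Omega$, so the component that contains such a neighbourhood encircles the hole bounded by $\gamma_I$ and is therefore non-simply connected, which is consistent with the statement. So assume $\omega$ is simply connected; it suffices to prove $\overline{\omega}\cap\gamma_I=\emptyset$, for then $\overline{\omega}$ is compact in $\Omega$ and the argument of part (1) applies verbatim. If $t<H$ and some $p\in\gamma_I$ belonged to $\overline{\omega}$, then $u(p)=H>t$ would give a neighbourhood $U$ of $p$ with $U\cap\Omega$ connected and contained in $\{u>t\}$, whence $U\cap\Omega\subset\omega$; the set of such points $p$ is nonempty, and open and closed in the connected curve $\gamma_I$, so $\omega$ would contain a relative neighbourhood of all of $\gamma_I$ and hence a closed curve encircling the hole, contradicting simple connectedness. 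If instead $t=H$, then at every point of $\partial\omega$ --- whether it lies on a level line $\{u=H\}$ in $\Omega$ or on $\gamma_I$ --- we have $u=H$, so if $\omega$ missed $\gamma_E$ the maximum principle would give $u\le H$ throughout $\omega$, contradicting $u>H$ in $\omega$; hence $\omega$ meets $\gamma_E$.

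The step I expect to be the main obstacle is the topological bookkeeping for simply connected components in part (2), namely ruling out that such a component can approach $\gamma_I$. This uses the sign condition $t<H$ near the inner boundary in an essential way, and it is the reason why on $(z,H]$ only simply connected components are forced out to $\gamma_E$, whereas on $(H,Z)$ every component is. Everything else reduces to the strong maximum principle together with the observation, used throughout the paper, that $\{u>t\}$ cannot bound, jointly with level lines, a subdomain of $\Omega$ on which $u$ would attain an interior maximum.
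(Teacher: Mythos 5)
Your argument is correct and follows essentially the same route as the paper: the boundary condition $u|_{\gamma_I}=H$ plus the (strong) maximum principle force any super-level component that avoids $\gamma_I$ and $\gamma_E$ into a contradiction, exactly as in Lemma~\ref{le3.1}. Your clopen-set argument ruling out that a simply connected component of $\{u>t\}$ with $t<H$ can touch $\gamma_I$ just spells out the topological step that the paper's one-line proof of part (2) leaves implicit, so no further changes are needed.
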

\begin{proof}[Proof](1) Since $t\in (H,Z),$ the proof is same as the proof of Lemma \ref{le3.1}. The results of (2) naturally holds. In fact, by the strong maximum principle and $u|_{\gamma_I}=H$, any connected component $\omega$ of $\{x\in \Omega : u(x)>H\}$ or $\{x\in \Omega : u(x)<H\}$ has to meet the external boundary $\gamma_E.$
\end{proof}

\begin{Lemma}\label{le4.2}%(le4.2)
 Suppose that $x_0$ is an interior critical point of $u$ in $\Omega$ and that $m$ is the multiplicity of $x_0.$ Then $m+1$ distinct connected components of $\{x\in \Omega :u(x)>u(x_0)\}$ cluster around the point $x_0$.
 \end{Lemma}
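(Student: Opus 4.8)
The statement to prove is Lemma~\ref{le4.2}: if $x_0$ is an interior critical point of $u$ with multiplicity $m$, then $m+1$ distinct connected components of $\{x\in\Omega:u(x)>u(x_0)\}$ cluster around $x_0$.

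The plan is to mimic the proof of Lemma~\ref{le3.2} almost verbatim, since the only structural ingredient needed is a local description of the level set through $x_0$ together with a global statement that the relevant super-level components actually reach a boundary where the values are larger. First I would invoke the Hartman--Wintner result \cite{Hartman}: because $a_{ij}(\nabla u)$ is smooth and $L$ is uniformly elliptic, near $x_0$ the level line $\{x\in\Omega:u(x)=u(x_0)\}$ consists of exactly $m+1$ analytic simple arcs through $x_0$, dividing a small punctured disc around $x_0$ into $2(m+1)$ open sectors on which $u-u(x_0)$ alternates in sign. In particular there are $m+1$ sectors on which $u>u(x_0)$, and hence at least $m+1$ germs of $\{u>u(x_0)\}$ emanating from $x_0$.

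Next I would argue that these $m+1$ local sectors lie in $m+1$ \emph{distinct} connected components of $\{x\in\Omega:u(x)>u(x_0)\}$. This is where Lemma~\ref{le4.1}(1) and (2) come in. Two of the local sectors would belong to the same component only if there were a path in $\{u>u(x_0)\}$ joining them; together with the arc structure at $x_0$ this would produce a simple closed curve $\alpha$ in $\overline{\{u\ge u(x_0)\}}$ enclosing a region on whose boundary $u\le u(x_0)$, forcing, via the strong maximum principle applied to $L$, either $u\equiv u(x_0)$ inside (contradicting $u$ non-constant) or $u<u(x_0)$ somewhere interior surrounded by $\{u\ge u(x_0)\}$ --- but then that sub-level component would be bounded away from $\gamma_E$, and if $u(x_0)>H$ it could also be separated from $\gamma_I$, contradicting Lemma~\ref{le4.1}. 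One must treat the case $u(x_0)\le H$ and $u(x_0)>H$ slightly differently, but in both cases part~(2) of Lemma~\ref{le4.1} guarantees simply connected super-level components meet $\gamma_E$, which is exactly the topological obstruction that prevents two sectors from merging. So the count of distinct components is at least $m+1$; a matching upper bound is immediate because the local picture has only $m+1$ positive sectors. I expect the routine-but-slightly-delicate point to be this merging argument --- precisely, ruling out that a super-level component ``wraps around'' $\gamma_I$ and reconnects --- which is handled by the $u|_{\gamma_I}=H$ hypothesis and the dichotomy in Lemma~\ref{le4.1}(2).

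Finally I would remark that the argument is purely local at $x_0$ plus the global reachability lemma, so it is identical in form to Lemma~\ref{le3.2}; the only new feature relative to Section~3 is that $t=u(x_0)$ may now be $\le H$, which is exactly the contingency Lemma~\ref{le4.1}(2) was stated to cover. Hence the proof can be compressed to: ``By \cite{Hartman}, near $x_0$ the level line $\{u=u(x_0)\}$ consists of $m+1$ simple arcs through $x_0$; by Lemma~\ref{le4.1} there exist $m+1$ distinct connected components of $\{x\in\Omega:u(x)>u(x_0)\}$ clustering around $x_0$. This completes the proof.''
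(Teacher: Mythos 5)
Your proposal is correct in the same sense as the paper and follows its proof exactly: invoke Hartman--Wintner for the local structure of $\{x\in\Omega: u(x)=u(x_0)\}$ near $x_0$ ($m+1$ simple arcs, hence $m+1$ sectors where $u>u(x_0)$), and then appeal to Lemma \ref{le4.1} to place these sectors in $m+1$ connected components of $\{x\in\Omega: u(x)>u(x_0)\}$. The paper's own proof consists of precisely those two sentences, so your extra discussion of why two sectors cannot merge (the closed-curve/maximum-principle argument and the wrap-around caveat near $\gamma_I$) is additional detail beyond what the paper records, not a different route.
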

\begin{proof}[Proof] According to the results of Hartman and Wintner \cite{Hartman}, in a neighborhood of $x_0$ the level line $\{x\in \Omega : u(x)=u(x_0)\}$ consists of $m+1$ simple arcs intersecting at $x_0$. By the results of Lemma \ref{le4.1}, there exist $m+1$ distinct connected components of $\{x\in \Omega :u(x)>u(x_0)\}$ clustering around the point $x_0$. This completes the proof.
\end{proof}

\begin{Lemma}\label{le4.3} %(le4.3)
 If there exists $t\in (z,H)$ such that a connected component $\omega$ of $\{x\in \Omega : u(x)>t\}$ is non-simply connected and the external boundary of $\omega$ is a simply closed curve between $\gamma_I$ and $\gamma_E$. Then there does not exist any interior critical point in $\omega$.
\end{Lemma}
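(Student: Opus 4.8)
The plan is to mimic the argument of Lemma \ref{le3.3}, but now for a super-level set below the value $H$ on the interior boundary. Suppose, for contradiction, that there is an interior critical point $x_0 \in \omega$. Since $\omega$ is a connected component of $\{x \in \Omega : u(x) > t\}$ with $t \in (z,H)$, we have $u(x_0) \ge t$; moreover, because $x_0$ is an interior critical point and $u$ is non-constant, the strong maximum principle (together with $u|_{\gamma_I} = H$) forces $u(x_0) < H$ — if $u(x_0) \ge H$, then the super-level set $\{u > u(x_0) - \varepsilon\}$ near $x_0$ would have to propagate in a way incompatible with $u \equiv H$ on $\gamma_I$. So $t \le u(x_0) < H$. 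Without loss of generality, assume the multiplicity of $x_0$ is one (the general case is identical, with $m+1$ arcs in place of $2$).

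Next I would examine the local picture at $x_0$. By Hartman--Wintner \cite{Hartman}, the level line $\{u = u(x_0)\}$ consists of two simple arcs crossing at $x_0$, so by Lemma \ref{le4.2} there are two distinct connected components of $\{x \in \Omega : u(x) > u(x_0)\}$ clustering around $x_0$, and likewise two distinct components of $\{x \in \Omega : u(x) < u(x_0)\}$ clustering around $x_0$. The key structural input is the hypothesis on $\omega$: its outer boundary $\gamma$ is a simple closed curve lying strictly between $\gamma_I$ and $\gamma_E$, on which $u \equiv t$, while $\gamma_I \subset \partial\omega$ carries the value $u \equiv H > u(x_0)$. So $\omega$ is a topological annulus with $u = t$ on its outer rim and $u = H$ on its inner rim $\gamma_I$, and $u(x_0) \in (t, H)$ strictly inside. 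This is exactly the configuration of Figure 7, and I would draw the analogous picture here: the two sub-level components of $\{u < u(x_0)\}$ clustering at $x_0$ cannot both escape to $\gamma$ (where $u = t < u(x_0)$) without one of them being trapped by the constraint $u = H$ on $\gamma_I$, producing either a closed level line of $u$ interior to $\omega$ enclosing a region, which the strong maximum principle forbids for a solution of (\ref{1.5}), or two level lines of $\{u = u(x_0)\}$ meeting transversally away from $x_0$, i.e. a second critical point, which can then be iterated — and by Lemma \ref{le3.4}/Lemma \ref{le4.1} this cannot go on, contradicting finiteness, or more directly contradicts the topology of the annular region $\omega$.

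Concretely, the cleanest route is: the component of $\{x \in \Omega : u(x) < u(x_0)\}$ inside $\omega$ is itself a non-empty open set whose closure cannot touch $\gamma_I$ (where $u = H > u(x_0)$) and whose boundary therefore lies in $\{u = u(x_0)\} \cup \gamma$. Since $u = t < u(x_0)$ on $\gamma$, each such sub-level component of $\{u < u(x_0)\}$ within $\omega$ either meets $\gamma$ or is compactly contained in $\omega$; the latter is impossible by the strong maximum principle (a compactly contained component of a sub-level set would force an interior minimum). But if \emph{both} of the two components clustering at $x_0$ meet $\gamma$, then $\gamma \setminus (\text{those two arcs})$ together with the level arcs at $x_0$ would disconnect the annulus $\omega$ in a way that strands $\gamma_I$ inside a component of $\{u < u(x_0)\}$ — contradicting $u = H > u(x_0)$ on $\gamma_I$. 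Hence no such $x_0$ exists.

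The main obstacle is the careful topological bookkeeping in that last step: making rigorous the claim that the local arcs at $x_0$, combined with $\gamma$ and $\gamma_I$, force an inadmissible component. I would handle this exactly as in the proof of Lemma \ref{le3.3} by reducing to the labelled picture (the analogue of Figure 7), invoking the Jordan curve theorem to see which region each arc bounds, and then applying the strong maximum principle to (\ref{1.5}) to rule out the resulting closed interior level curve or interior extremum. Everything else is a direct transcription of Lemmas \ref{le3.2}--\ref{le3.3} with the roles of super- and sub-level sets interchanged and with the value $t$ now lying in $(z,H)$ rather than $(H, \max_{\gamma_E}\psi)$.
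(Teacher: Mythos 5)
Your overall strategy is the correct one and is exactly what the paper intends: its proof of this lemma consists of the single sentence that it is the same as Lemma \ref{le3.3}, i.e.\ assume a critical point $x_0\in\omega$, note $t<u(x_0)<H$, invoke Hartman--Wintner for the local arc structure at $x_0$, and reach a contradiction with the strong maximum principle by a topological trapping argument in the annular region $\omega$ (outer rim $\gamma$ with $u\equiv t$, inner rim $\gamma_I$ with $u\equiv H$). Two small remarks on your set-up: $u(x_0)>t$ is automatic because $x_0$ lies in the open set $\omega\subset\{u>t\}$, and $u(x_0)<H$ needs no ``propagation'' argument — on $\omega$ the strong maximum principle gives $u<\max_{\partial\omega}u=H$ directly.

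The genuine problem is the concrete mechanism in your ``cleanest route''. If both sub-level sectors of $\{u<u(x_0)\}$ at $x_0$ reach the outer rim $\gamma$ (they then in fact join the collar of $\gamma$, where $u<u(x_0)$, so they lie in a single component), the resulting crosscut or loop through $x_0$ does \emph{not} strand $\gamma_I$ inside a component of $\{u<u(x_0)\}$: the piece of $\omega$ containing $\gamma_I$ is partly bounded by $\gamma_I$ itself, where $u=H>u(x_0)$, and no contradiction arises there. The contradiction lives on the \emph{other} side: since the $m+1\ge 2$ arcs at $x_0$ produce alternating sectors, at least one super-level sector of $\{u>u(x_0)\}$ lies in the piece not containing the hole (a lune bounded by the sub-level crosscut and an arc of $\gamma$, or the side of the loop away from $\gamma_I$), whose boundary carries only values $\le u(x_0)$; the component of $\{u>u(x_0)\}$ containing that sector is therefore compactly contained in $\Omega$ with $u=u(x_0)$ on its boundary, an interior maximum forbidden by the strong maximum principle for (\ref{1.5}). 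Equivalently — and this is the literal mirror of Lemma \ref{le3.3} and Figure 7 — one can argue with the super-level components of $\{x\in\omega:u(x)>u(x_0)\}$: they cannot touch $\gamma$ and cannot be compactly contained, so each must reach the connected collar of $\gamma_I$, hence all $m+1$ sectors belong to one component, and the loop so produced traps a sub-level sector, giving an interior minimum. Either version is fine, but the trapped-sector/interior-extremum step is the one that must be stated correctly; your announced contradiction at $\gamma_I$ is not the one that occurs, you must also treat the case where the two sub-level sectors already lie in one component of $\{x\in\omega:u<u(x_0)\}$, and the fallback of ``iterating'' new critical points against finiteness is both vague and unnecessary here.
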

\begin{proof}[Proof] The proof is same as the proof of Lemma \ref{le3.3}, so we omit the proof.
\end{proof}

\begin{Lemma}\label{le4.4}%(le4.4)
Suppose that $u$ is a non-constant solution to (\ref{1.5}). Then $u$ has finite interior critical points in $\Omega$.
\end{Lemma}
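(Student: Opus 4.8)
The plan is to adapt the contradiction argument used in Lemma \ref{le3.4}, now paying attention to the splitting of the super-level sets across the threshold $H$. First I would assume, for contradiction, that $u$ has infinitely many interior critical points $x_1,x_2,\ldots$ in $\Omega$. By the theorem of Hartman and Wintner, each $x_i$ is an isolated critical point, so the critical values $u(x_i)$ accumulate somewhere in the closed interval $[z,Z]$; passing to a subsequence we may assume all $x_i$ have critical value in one of the two regimes, either $u(x_i)\in(H,Z)$ for all $i$, or $u(x_i)\in(z,H]$ for all $i$ (the case $u(x_i)=H$ can be absorbed into the second regime, and critical values equal to $z$ or $Z$ are excluded since by the strong maximum principle $u$ attains its extrema only on $\partial\Omega$).

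In the first regime, for each $i$ Lemma \ref{le4.2} gives $m_i+1\ge 2$ distinct connected components of $\{x\in\Omega:u(x)>u(x_i)\}$ clustering around $x_i$, and by part (1) of Lemma \ref{le4.1} each such component must meet $\gamma_E$. Since the strong maximum principle forces each such component to carry at least one local maximal point of $\psi$ on $\gamma_E$, and distinct critical points with distinct critical values produce components that are genuinely different (or nested in a way that still forces new boundary maxima, exactly as in the proof of Lemma \ref{le3.4}), we would produce infinitely many local maximal points of $\psi$ on $\gamma_E$, contradicting $\psi\in C^1(\overline\Omega)$ with only $N$ local maximal points. In the second regime the components of $\{x\in\Omega:u(x)>u(x_i)\}$ may be non-simply connected, but Lemma \ref{le4.3} says any non-simply connected such component whose external boundary is a simple closed curve between $\gamma_I$ and $\gamma_E$ contains no interior critical point; hence every $x_i$ lies in a component that either is simply connected — and then meets $\gamma_E$ by part (2) of Lemma \ref{le4.1}, again generating a boundary maximum of $\psi$ — or is non-simply connected with external boundary touching $\gamma_E$, which likewise forces a boundary maximal point. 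Either way infinitely many local maximal points of $\psi$ on $\gamma_E$ appear, a contradiction.

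The main obstacle is the bookkeeping in the second regime: one must be sure that the infinitely many super-level components attached to the $x_i$ really do yield infinitely many \emph{distinct} local maxima of $\psi$ on $\gamma_E$, rather than the same finitely many maxima being reused as the threshold $t=u(x_i)$ varies. This is handled by the same mechanism as in Lemma \ref{le2.3} and Lemma \ref{le3.4}: if $u(x_i)<u(x_j)$ then a component of $\{u>u(x_j)\}$ clustering at $x_j$ is strictly contained in a component of $\{u>u(x_i)\}$, so as the critical values move one extracts a nested or disjoint family of super-level components whose cardinality is unbounded, and each genuinely new component — by Lemma \ref{le4.1} meeting $\gamma_E$ and by the strong maximum principle carrying its own local maximum of $\psi$ there — contributes a local maximal point not previously counted. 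Since $N$ is finite this is impossible, so $u$ has only finitely many interior critical points in $\Omega$, completing the proof.
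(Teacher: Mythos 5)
Your argument is essentially the paper's own proof: assuming infinitely many interior critical points, Lemmas \ref{le4.1}--\ref{le4.3} together with the strong maximum principle produce infinitely many super-level components meeting $\gamma_E$, hence infinitely many local maximal points of $\psi$ on $\gamma_E$, contradicting the finiteness of $N$. Your additional case split about whether the critical values lie in $(H,Z)$ or $(z,H]$, and the bookkeeping remark on distinctness of the boundary maxima, merely make explicit what the paper's terser proof leaves implicit, so the approach is the same.
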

\begin{proof}[Proof]We set up the usual contradiction argument. Suppose that $u$ has infinite interior critical points in $\Omega,$ denoting by $x_1,x_2,\cdots$. The results of Lemma \ref{le4.1} and Lemma \ref{le4.2} show that there exists infinite simply connected components of $\{x\in \Omega :u(x)>u(x_i)\}~(i=1,2,\cdots)$, which meet the external boundary $\gamma_E$. The strong maximum principle implies that there exists at least a maximum point on $\gamma_E$ for any simply connected component $\omega$ of $\{x\in \Omega :u(x)>u(x_i)\}$ such that $\omega$ meet the external boundary $\gamma_E$. Therefore there exists infinite maximal points on $\gamma_E,$ this contradicts with the assumption. This completes the proof.
\end{proof}

\begin{Lemma}\label{le4.5}%(le4.5)
Let $x_1,x_2,\cdots,x_k$ be the interior critical points of $u$ in $\Omega$. Suppose that $u(x_1)=u(x_2)=\cdots=u(x_k)\equiv t$ and that all the critical points $x_1,x_2,\cdots,x_k$ together with the corresponding level lines of $\{x\in \overline{\Omega} : u(x)=t\}$ clustering round these points form $q$ connected sets, where $q\geq 1$ and $m_1,m_2,\cdots,m_k$ are the multiplicities of critical points $x_1,x_2,\cdots,x_k$ respectively.\\
Case 1: Suppose that there exists a non-simply connected component $\omega$ of $\{x\in \Omega : u(x)>t\}$ for some $t\in (z,H)$ and the external boundary $\gamma$ of $\omega$ is a simply closed curve between $\gamma_I$ and $\gamma_E$ such that $u$ has at least one critical point on $\gamma$, then
 \begin{equation}\label{4.1}\begin{array}{l}
\sharp\Big\{\mbox{the\ simply\ connected\ components\ $\omega$ of\ the\ super-level\ set} ~ \{x\in\Omega : u(x)>t\}\\~~~ \mbox{such\ that\ $\omega$ meet\ the\ external\ boundary\ $\gamma_E$} \Big\}= \sum\limits_{i = 1}^k {{m_i}}+q-1.
\end{array}\end{equation}
Case 2: Suppose  that $t=H$ or that there exists a non-simply connected component $\omega$ of $\{x\in \Omega : u(x)>t\}$ for some $t\in (z,H)$ such that $\omega$ meets $\gamma_E$. In addition, we set $M_1$ and $M_2$ as the number of the connected components of the super-level set $\{x\in\Omega : u(x)>t\}$ and the sub-level set $\{x\in\Omega : u(x)<t\}$, respectively. Then
\begin{equation}\label{4.2}\begin{array}{l}
M_1\geq \sum\limits_{i = 1}^k {{m_i}}+1,~M_2\geq \sum\limits_{i = 1}^k {{m_i}}+1, ~\mbox{and}~M_1+M_2=2\sum\limits_{i = 1}^k {{m_i}}+q+1.
\end{array}\end{equation}
Case 3: Suppose that $t\in (H,Z),$ the results see Lemma \ref{le3.5}.
\end{Lemma}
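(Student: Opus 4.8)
The plan is to run the argument of Lemma \ref{le3.5} with the super-level and sub-level sets interchanged. The reason this interchange is the right move is that for $t\le H$ one has $u|_{\gamma_I}=H\ge t$, so the inner boundary $\gamma_I$ lies in $\{x\in\Omega:u(x)>t\}$ when $t<H$ and on the level line $\{x\in\Omega:u(x)=H\}$ when $t=H$; hence the ``exceptional'' non-simply connected component --- the one wrapping around the inner hole, whose external boundary is a simple closed curve between $\gamma_I$ and $\gamma_E$ --- now sits inside $\{u>t\}$ rather than inside $\{u<t\}$. In particular Case 3 requires no work at all: for $t\in(H,Z)$ the hypotheses of Lemma \ref{le3.5} hold verbatim (Lemma \ref{le4.1}(1) and Lemma \ref{le4.2} supplying what Lemma \ref{le3.1} and Lemma \ref{le3.2} supplied there, and $\gamma_I\subset\{u<t\}$ since $u|_{\gamma_I}=H<t$), so (\ref{4.1})--(\ref{4.2}) are already contained in that statement.

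For Case 1 I would argue exactly as in Case 1 of Lemma \ref{le3.5}. By Lemma \ref{le4.3} the exceptional non-simply connected component $\omega$ of $\{u>t\}$ contains no interior critical point, so all of $x_1,\dots,x_k$ lie outside $\overline{\omega}$. First take $q=1$ and induct on $k$. The base case $k=1$ follows from Lemma \ref{le4.1} and Lemma \ref{le4.2}: the $m_1+1$ components of $\{u>t\}$ clustering round $x_1$ split into the one exceptional component (which must contain $\gamma_I$ on its boundary, since by hypothesis $u$ has a critical point on $\gamma$ and $k=1$ forces that point to be $x_1$ --- this is the analogue of Remark \ref{re3.6}) and $m_1$ simply connected components, each meeting $\gamma_E$ by Lemma \ref{le4.1}(2). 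In the inductive step one peels off $x_{n+1}$; since the whole configuration $A$ cannot surround a component of $\{u<t\}$ by Lemma \ref{le4.1}, up to renumbering there is exactly one component of $\{u>t\}$ whose boundary contains both $x_n$ and $x_{n+1}$, and the counts combine as $\bigl(\sum_{i=1}^n m_i\bigr)+(m_{n+1}+1)-1=\sum_{i=1}^{n+1}m_i$. Passing from $q=1$ to general $q\ge 1$, each further connected set of level lines of $\{u=t\}$ adds exactly one more simply connected component of $\{u>t\}$ meeting $\gamma_E$, giving (\ref{4.1}).

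Case 2 is the same induction but counting all connected components. When $t=H$ one first notes that every simply connected component of $\{u>H\}$ meets $\gamma_E$ by Lemma \ref{le4.1}(2), while the component of $\{u>H\}$ adjacent to $\gamma_I$ is non-simply connected, so the situation really does fall under the hypothesis of Case 2. Then one repeats the $q=1$ induction of Case 2 of Lemma \ref{le3.5}: base case $k=1$ from Lemma \ref{le4.1} and Lemma \ref{le4.2}, inductive step merging along the unique common-boundary component to get $(\sum_{i=1}^n m_i+1)+(m_{n+1}+1)-1=\sum_{i=1}^{n+1}m_i+1$, so $M_1\ge\sum_{i=1}^k m_i+1$, and the identical count for the sub-level set gives $M_2\ge\sum_{i=1}^k m_i+1$. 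For $q\ge 2$ one uses that the interior critical points are isolated (Hartman--Wintner), so each increment of the number of connected sets of $\{u=t\}$ raises exactly one of $M_1,M_2$ by one; summing yields $M_1+M_2=2(\sum_{i=1}^k m_i+1)+(q-1)=2\sum_{i=1}^k m_i+q+1$, which is (\ref{4.2}).

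I expect the main obstacle to be the topological bookkeeping in the inductive steps: proving that exactly one component of $\{u>t\}$ carries both $x_n$ and $x_{n+1}$ on its boundary (this is precisely where Lemma \ref{le4.1}, forbidding $A$ from enclosing a component of $\{u<t\}$, is used), and cleanly isolating the single exceptional component around $\gamma_I$ so that it is neither double-counted nor miscounted as simply connected. A secondary delicate point is the borderline value $t=H$: there $\gamma_I$ lies on the critical level set itself rather than strictly inside $\{u>t\}$, and one must verify that the component of $\{u>H\}$ touching $\gamma_I$ is non-simply connected so that the Case 2 mechanism applies unchanged.
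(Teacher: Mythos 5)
Your proposal is correct and follows essentially the same route as the paper: the paper's own proof is just a citation (Case 1 ``same as case 1 of Lemma \ref{le3.5}'', Case 2 from Lemma \ref{le2.4} together with case 2 of Lemma \ref{le3.5}, Case 3 by reference), and your write-up simply carries out that argument explicitly with the roles of the super-level and sub-level sets interchanged, using Lemmas \ref{le4.1}--\ref{le4.3} in place of Lemmas \ref{le3.1}--\ref{le3.3}. Your only deviation is the extra discussion of $t=H$ (where the paper instead invokes the simply connected count of Lemma \ref{le2.4}, since for $t=H$ every component of $\{u>H\}$ and $\{u<H\}$ meets $\gamma_E$), but the counting you perform there is the same and the conclusion is unaffected.
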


\begin{proof}[Proof](i) Case 1: The proof is same as the proof of case 1 of Lemma \ref{le3.5}.

(ii) Case 2: Lemma \ref{le2.4} and the case 2 of Lemma \ref{le3.5} implies case 2.
\end{proof}

We are now ready to prove Theorem \ref{th1.5}.
\begin{proof}[Proof of Theorem \ref{th1.5}]  (i) Case 1: If $u(x_1)=u(x_2)=\cdots=u(x_k)\equiv t$. By the results of Lemma \ref{le4.5}, we know that
\begin{equation*}\begin{array}{l}
 \sharp\Big\{\mbox{the\ simply\ connected\ components\ $\omega$ of\ the\ super-level\ set} ~ \{x\in\Omega : u(x)>t\}\\~~~ \mbox{for\ $t\in(z,H]$\ such\ that\ $\omega$ meet\ the\ external\ boundary\ $\gamma_E$} \Big\}\geq \sum\limits_{i = 1}^k {{m_i}},
\end{array}\end{equation*}
or
\begin{equation*}\begin{array}{l}
 \sharp\Big\{\mbox{the\ simply\ connected\ components\ $\omega$ of\ the\ sub-level\ set} ~ \{x\in\Omega : u(x)<t\}\\~~~ \mbox{for\ $t\in(H,Z)$\ such\ that\ $\omega$ meet\ the\ external\ boundary\ $\gamma_E$} \Big\}\geq \sum\limits_{i = 1}^k {{m_i}},
\end{array}\end{equation*}
By the strong maximum principle, we have that $u$ exists at least $\sum\limits_{i = 1}^k {{m_i}}$ local maximal points or minimal points on $\gamma_E.$ Hence, we have
\[ \sum\limits_{i = 1}^k {{m_i}}\leq N.\]
(ii) Case 2: The values at critical points $x_1,\cdots,x_k$ are not totally equal. The idea of proof is essentially same as the case 2 of the proof of Theorem \ref{th1.1} and Theorem \ref{th1.3}. Here we omit the proof.
\end{proof}

\subsection{Proof of Theorem \ref{th1.6}}
~~~~~In this subsection, we investigate the geometric structure of interior critical points of a solution in a planar, bounded, smooth annular domain $\Omega$ with the interior boundary $\gamma_I$ and the external boundary $\gamma_E$ for the case of $\mathop {\min}\limits_{\gamma_E}\psi(x)<H<\mathop {\max}\limits_{\partial \Omega}\psi(x)$ and $\psi$ having only $N$ global maximal points and $N$ global minimal points on $\gamma_E$, where $N\geq2$. Next we show (\ref{1.11}) or (\ref{1.10}) by proving the three ``just right''s.
\begin{proof}[Proof of Theorem \ref{th1.6}] We divide the proof into three cases.

(1) Case 1: Suppose that there exists a non-simply connected component $\omega$ of $\{x\in \Omega : u(x)>t\}$ for some $t\in (z,H)$ such that $\omega$ meets $\gamma_E$ or that there exists critical value $H$. The idea of proof is essentially same as the proof of Theorem \ref{th1.2} and the case 1 of Theorem \ref{th1.4}. Then we have
\begin{equation}\label{4.3}\begin{array}{l}
\begin{array}{l}
\sum\limits_{i = 1}^k {{m_i}}+1= N,
\end{array}
\end{array}\end{equation}
where $m_1,m_2,\cdots,m_k$ are the multiplicities of critical points $x_1,x_2,\cdots,x_k$ respectively.

(2) Case 2:  Suppose that there exists a non-simply connected component $\omega$ of $\{x\in \Omega : u(x)>t\}$ for some $t\in (z,H)$ and the external boundary $\gamma$ of $\omega$ is a simply closed curve between $\gamma_I$ and $\gamma_E$ such that $u$ has at least one critical point on $\gamma$. We deduce (\ref{1.10}) by proving the three ``just right''s. We should divide the proof of case 2 into four steps.

Step 1, the first ``just right'': According to Lemma \ref{le4.4}, we assume that the interior critical points of $u$ are $x_1,x_2,\cdots,x_k$. We show that $u(x_1)=u(x_2)=\cdots=u(x_k)\equiv t,$ i.e., we exclude the case of case 2 in Theorem \ref{th1.5}. The proof is same as the step 2 of the proof of case 1 of Theorem \ref{th1.4}.

Step 2, the second ``just right'': we show that $x_1,x_2,\cdots,x_k$ together with the corresponding level lines of $\{x\in \Omega: u(x)=t\}$ clustering round these points exactly form one connected set. The proof is same as the step 3 of the proof of case 1 of Theorem \ref{th1.4}.

Step 3, the third ``just right'': we show that every simply connected component $\omega$ of $\{x\in \Omega: u(x)>t\}$ has exactly one global maximal point on boundary $\gamma_E,$ where $\omega$ meets the external boundary  $\gamma_E.$  In fact, we assume that some simply connected component $B$ of $\{x\in \Omega: u(x)>t\}$ exists two global maximal points on boundary $\gamma_E.$ According to $\psi$ has only $N$ global maximal points and $N$ global minimal points on $\gamma_E$, then there must exist a minimal point $\overline{p}$ between the two maximal points on $\gamma_E$ such that $u(\overline{p})=z$. Since $u(x)>t>z$ in $B$, by the continuity of solution $u$, this contradicts with the definition of connected component $B$. This completes the proof of step 3.

Step 4, By the results of step 2 and case 1 of Lemma \ref{le4.5}, we have
 \begin{equation}\label{4.4}\begin{array}{l}
 \sharp\Big\{\mbox{the\ simply\ connected\ components\ $\omega$ of\ the\ super-level\ set} ~ \{x\in\Omega : u(x)>t\}\\~~~ \mbox{such\ that\ $\omega$ meet\ the\ external\ boundary\ $\gamma_E$} \Big\}=\sum\limits_{i = 1}^k {{m_i}}.
\end{array}\end{equation}
On the other hand, using the results of step 3 and the strong maximum principle, therefore we obtain
\begin{equation}\label{4.5}\begin{array}{l}
 \sum\limits_{i = 1}^k {{m_i}}=N.
\end{array}\end{equation}
Case 3: For $t\in (H,Z)$, the results see Theorem \ref{th1.4}. This completes the proof of Theorem \ref{th1.6}.
\end{proof}

Let $N=1$, we have:
\begin{Corollary}\label{co4.6}
 Suppose that $\mathop {\min}\limits_{\gamma_E}\psi(x)<H<\mathop {\max}\limits_{\partial \Omega}\psi(x)$ and that $\psi$ has exactly one maximal point on $\gamma_E$, Let $u$ be a non-constant solution of (\ref{1.5}). Then $u$ has at most one interior critical point $p$ in $\Omega.$ If $u$ has one interior critical point $p$, then the multiplicity of the interior critical point $p$ is one.
\end{Corollary}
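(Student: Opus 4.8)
The plan is to deduce this corollary directly from Theorem~\ref{th1.5} by specializing to $N=1$, combined with the elementary fact that the multiplicity of an interior critical point is always at least one.

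First I would check that the hypotheses of Corollary~\ref{co4.6} are exactly those of Theorem~\ref{th1.5} with $N=1$: ``$\psi$ has exactly one maximal point on $\gamma_E$'' is the case of a single local maximal point, and the assumption $\mathop{\min}\limits_{\gamma_E}\psi(x)<H<\mathop{\max}\limits_{\partial\Omega}\psi(x)$ reduces to $\mathop{\min}\limits_{\gamma_E}\psi(x)<H<\mathop{\max}\limits_{\gamma_E}\psi(x)$ in the present setting, since $u\equiv H$ on $\gamma_I$ and by the strong maximum principle $u$ can exceed $H$ somewhere in $\overline{\Omega}$ only if $\psi$ exceeds $H$ somewhere on $\gamma_E$. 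Thus Theorem~\ref{th1.5} applies and gives that $u$ has finitely many interior critical points $x_1,\dots,x_k$ with $\sum_{i=1}^k m_i\le 1$.

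Next I would invoke the local structure of Hartman and Wintner \cite{Hartman}, already used in Lemma~\ref{le4.2}: near any interior critical point $x_0$ the level line $\{x\in\Omega:u(x)=u(x_0)\}$ consists of $m+1\ge 2$ simple arcs through $x_0$, so the multiplicity $m$ satisfies $m\ge 1$. Combining $m_i\ge 1$ for all $i$ with $\sum_{i=1}^k m_i\le 1$ forces $k\le 1$, since $k\ge 2$ would give $\sum_{i=1}^k m_i\ge 2$. Hence $u$ has at most one interior critical point $p$, and when $k=1$ the inequality $1\le m_1\le 1$ yields $m_1=1$, i.e.\ the multiplicity of $p$ is exactly one.

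I do not expect any genuine obstacle here: this corollary is simply the endpoint $N=1$ of the bound $\sum_{i=1}^k m_i\le N$, and no further ``just right'' argument is required. The only points needing a sentence of justification are the reduction of the condition on $H$ to $\mathop{\min}\limits_{\gamma_E}\psi(x)<H<\mathop{\max}\limits_{\gamma_E}\psi(x)$ and the triviality $m_i\ge 1$; everything else is a direct citation of Theorem~\ref{th1.5}.
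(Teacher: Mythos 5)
Your proposal is correct and matches the paper's intended argument: the paper states Corollary \ref{co4.6} simply as the case $N=1$ of Theorem \ref{th1.5} (``Let $N=1$, we have:''), and your only additions are the routine observations that each multiplicity satisfies $m_i\ge 1$ (Hartman--Wintner) so $\sum_i m_i\le 1$ forces $k\le 1$ and $m_1=1$, and that the hypothesis written with $\max_{\partial\Omega}\psi$ is to be read as $\max_{\gamma_E}\psi$ as in Theorem \ref{th1.5}. No gap; this is essentially the same proof.
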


\noindent  \textbf{Acknowledgement.} We are very grateful to the anonymous referees for the very careful reading and many very valuable suggestions which have helped to improve the presentation of this paper and the first author is very grateful to his advisor Professor Xiaoping Yang for his expert guidances and useful conversations.

\end{document}